
\documentclass[12pt, twoside,reqno]{amsart}

\usepackage{times,amssymb,amsmath,rotating,multirow,color,enumerate,url,mathrsfs}
\usepackage{arydshln}
\usepackage{bbm}
\usepackage{esint}
\usepackage{gensymb}
\usepackage{epstopdf}
\usepackage{comment}
\usepackage{leftidx}
\usepackage{amsthm}
\usepackage{mathtools}
\usepackage{enumerate}
\usepackage{subfig}
\usepackage{amsaddr}
\tolerance=10000


\newtheorem{theorem}{Theorem}[section]
\newtheorem{corollary}[theorem]{Corollary}
\newtheorem{lemma}[theorem]{Lemma}
\newtheorem{proposition}[theorem]{Proposition}



\theoremstyle{definition}
\newtheorem{definition}[theorem]{Definition}
\newtheorem{remark}[theorem]{Remark}
\newtheorem{example}[theorem]{Example}



\numberwithin{equation}{section}

\frenchspacing

\textwidth=15cm
\textheight=23cm
\parindent=16pt
\oddsidemargin=-0.5cm
\evensidemargin=-0.5cm
\topmargin=-0.5cm

\captionsetup[subfigure]{labelfont=rm}



\newcommand{\R}{\mathbb{R}}
\newcommand{\Rd}{\R^d}

\newcommand{\MRd}{\mathcal{M}\left( \Rd \right)}
\newcommand{\prange}{[1,\infty)}
\newcommand{\Lpmuz}{L^{\,p}_{\mu}}
\newcommand{\Lpmudz}{\bigl(L^{\,p}_{\mu}\bigr)^d}
\newcommand{\Hpmuz}{H^{1,p}_{\mu}}
\newcommand{\Pmuz}{P_{\mu}}
\newcommand{\Tmuz}{T^p_{\mu}}
\newcommand{\tgradz}{D_{\mu}}
\newcommand{\Iaa}{{\left(a_-,a_+\right)}}
\newcommand{\Lloc}[1]{{L^1_{\mathrm{loc}}(#1)}}
\newcommand{\muw}{{\mu_w}}
\newcommand{\Leb}{{\mathcal{L}}}
\newcommand{\wcrit}{{\overline{w}}}

\newcommand{\Hnukp}[1]{{H^{#1,p}_{\mu_w}}}
\newcommand{\Hnuktwo}{{H^{2,2}_{\mu_w}}}

\newcommand{\Tmuw}{T^p_{\muw}}

\newcommand{\am}{{a_-}}
\newcommand{\ap}{{a_+}}
\newcommand{\Icr}[1]{{I^{#1}_{cr}}}
\newcommand{\cIcr}[1]{{\bar{I}^{\, #1}_{cr}}}
\newcommand{\cIcrl}[1]{{\bar{I}^{\, #1}_{cr,-}}}
\newcommand{\cIcrr}[1]{{\bar{I}^{\, #1}_{cr,+}}}
\newcommand{\tgradnuk}[1]{D^{#1}_{\mu_w}}
\newcommand{\Lpnu}{{L^{\,p}_{\mu_w}}}
\newcommand{\Pmuw}{P_{\mu_w}}
\newcommand{\noncrit}{{\muw\bigl( \Icr{p}(w) \bigr) = 0}}
\newcommand{\ustep}{{\hat{u}}}
\newcommand{\vstep}{{\hat{v}}}
\newcommand{\hphi}{{\hat{\phi}}}
\newcommand{\Hnukpa}[1]{{H^{#1,p}_{\mu_{w_\gamma}}}}
\newcommand{\Lpnua}{{L^{\,p}_{\mu_{w_\gamma}}}}
\newcommand{\tgradnuka}[1]{D^{#1}_{\mu_{w_\gamma}}}
\newcommand{\al}{\alpha}
\newcommand{\U}{{\mathcal{U}}}
\newcommand{\Ue}{{\mathcal{U}_\eps}}
\newcommand{\V}{{\mathcal{V}}}
\newcommand{\bk}{{\bar{k}}}
\newcommand{\Dk}{{\Delta k}}
\newcommand{\der}[2]{D^{#1}#2}
\newcommand{\dr}[2]{#2^{(#1)}}
\newcommand{\Tr}[2]{\mathrm{Tr}^{#1}\!#2\,}
\newcommand{\J}[1]{{J_{\,#1}}}
\newcommand{\Jc}[1]{{J^{*}_{\,#1}}}
\newcommand{\Bl}{{B_-}}
\newcommand{\Br}{{B_+}}
\newcommand{\cnoncrit}{{\muw\bigl( \cIcr{0,p}(w) \bigr) = 0}}

\newcommand{\Pro}{{\mathcal{P}}}
\newcommand{\Ione}{{I_1}}
\newcommand{\ut}{{\tilde{u}}}
\newcommand{\Dv}{{\Delta v}}

\newcommand{\pairing}[1]{{\left \langle #1 \right \rangle}}
\newcommand{\norm}[1]{\Arrowvert #1 \Arrowvert}
\newcommand{\abs}[1]{{\left \lvert #1 \right \rvert}}

\newcommand{\eps}{\varepsilon}
\newcommand{\Rb}{\overline{\mathbb{R}}}
\newcommand{\G}{\mathcal{G}}
\newcommand{\D}{\mathcal{D}}
\newcommand{\C}{\mathcal{C}}
\newcommand{\Ha}{\mathcal{H}}

\newcommand{\mres}{\mathbin{\vrule height 1.6ex depth 0pt width
		0.13ex\vrule height 0.13ex depth 0pt width 1.3ex}}
\newcommand{\Lpmu}{L^{\,p}_{\mu_\G}}


\begin{document}

	\title[Higher order weighted Sobolev spaces for strongly degenerate weights]{Higher order weighted Sobolev spaces on the real line for strongly degenerate weights. Application to variational problems in elasticity of beams}

	
	\author{Karol Bo{\l}botowski}
	
	\address{Department of Structural Mechanics and Computer Aided Engineering, Faculty of Civil Engineering, Warsaw University of Technology, 16 Armii Ludowej Street, 00-637 Warsaw, \linebreak
	College of Inter-Faculty Individual Studies in Mathematics and Natural Sciences, University of Warsaw, 2C Stefana Banacha St., 02-097 Warsaw
	}
	\curraddr{}
	\email{k.bolbotowski@il.pw.edu.pl}
	\thanks{}
	
	\subjclass[2010]{46E35, 49J45, 49N15, 74K10}
	\keywords{Weighted Sobolev space, Sobolev spaces with respect to measure, degenarate weights, duality theory, elasticity of beams}
	
	\date{\today}
	
	\dedicatory{}
	
	\begin{abstract}
		For one-dimensional interval and integrable weight function $w$ we define via completion a weighted Sobolev space $\Hnukp{m}$ of arbitrary integer order $m$. The weights in consideration may suffer strong degeneration so that, in general, functions $u$ from $\Hnukp{m}$ do not have weak derivatives. This contribution is focussed on studying the continuity properties of functions $u$ at a chosen internal point $x_0$ to which we attribute a notion of criticality of order $k$ and with respect to the weight $w$. For non-critical points $x_0$ we formulate a local embedding result that guarantees continuity of functions $u$ or their derivatives. Conversely, we employ duality theory to show that criticality of $x_0$ furnishes a smooth approximation of functions in $\Hnukp{m}$ admitting jump-type discontinuities at $x_0$. The work concludes with demonstration of established results in the context of variational problem in elasticity theory of beams with degenerate width distribution.
	\end{abstract}
	
	\maketitle


	\section{Introduction}
	\label{intro}
	
	A basic design problem in structural mechanics is to optimally construct an elastic beam -- a horizontal, one-dimensional body that by means of bending transfers a given vertical load to the kinematical supports. Our design should occupy an interval $I = (a_-,a_+) \subset  \R$; the depth of the beam $h_0$ and the material characterized by the Young modulus $E_0$ may be assumed constant and fixed, while we vary the non-negative width distribution of the beam $w: I \rightarrow \R_+\cup \{0\}$. Our limitation is the prescribed total volume of the beam given by $\int_I h_0 \, w(x)\, dx \leq V_0$. Assuming the linearly elastic model of the beam, its deflection function $u:I \rightarrow \R$ is formally governed by the 4-th order elliptic equation $ D^2\bigl((\mathcal{E}_0 w ) D^2 u \bigr) = f$ where $\mathcal{E}_0 = E_0 h_0 /12 $ and the distribution $f \in \D'(I)$ describes the loading. The classical problem is to find the optimal width function $\hat{w} \in L^1(I)$ that minimizes the so-called compliance (potential energy of the system) for a single point force applied in the centre (expressed by the dirac Delta measure $f = \hat{f} =F\, \delta_{(\am,\ap)/2}$) in a clamped beam (namely with kinematical supports formulated through homogeneous Dirichlet boundary conditions). This optimization problem falls within the scope of mathematical theory of optimal shape and mass distribution design that was put forward for the $d$-dimensional setting by \cite{bouchitte2001}, \cite{bouchitte2007} and \cite{bouchitte2008}, where the design variable was a Radon measure $\mu \in \MRd$ that represented the mass support of the target structure. The other work of the present author, \cite{bolbotowski2020b}, localizes this theory for one-dimensional second-order problem (and also for a problem on a graph) where we can limit our search to integrable non-negative functions $w \in L^1(I)$ representing width. This fact is long known at the formal level and first papers on the width optimization in beams date back to late '50s, see for instance \cite{heyman1959}, \cite{rozvany1976}, \cite{prager1977}. Therein derived, a "candy-shaped" optimal design for the problem of the clamped beam loaded at the centre is displayed in Fig. \nolinebreak \ref{fig:clamped_beam}(a).
	\begin{figure}[h]
		\centering
		\subfloat[]{\includegraphics*[width=0.45\textwidth]{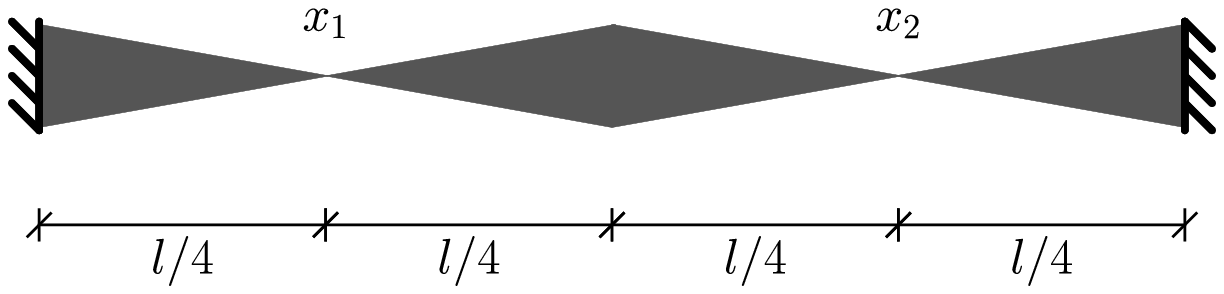}}\hspace{1cm}
		\subfloat[]{\includegraphics*[width=0.45\textwidth]{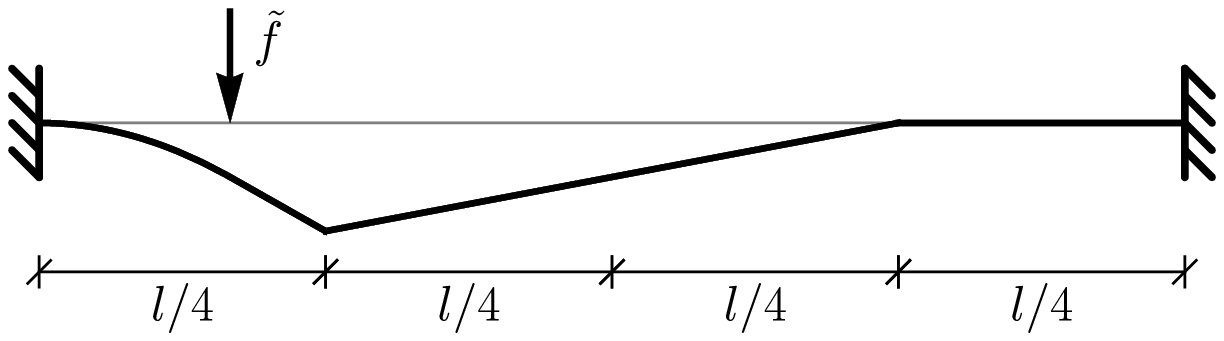}}\\
		\caption{(a) an optimal width distribution $\hat{w}$ in a clamped beam for a point-force applied in the centre (view from the top); (b) deformation of the optimal beam under a shifted load (side view); $l = \abs{\am - \ap}$ denotes the length of the beam.}
		\label{fig:clamped_beam}
	\end{figure}
	
	The beam with the optimal width $\hat{w}$ ought to serve as an elastic structure carrying a load $f = \tilde{f}$ that does not necessarily coincide with the load $\hat{f}$ for which it was designed, see Fig. \nolinebreak \ref{fig:clamped_beam}(b). The function $\hat{w}$ admits singularities at the two points $x_1, x_2$, thus the degenerate differential equation "$D^2\bigl( \hat{w} D^2 u \bigr)= \tilde{f}$" requires special treatment (henceforward we assume that $\mathcal{E}_0 = 1$). The natural approach involves the variational formulation 
	\begin{equation}
	\label{eq:beam_primal}
	\inf \biggl\{ J\bigl(D^m u\bigr) - \bigl \langle u,\tilde{f} \bigr \rangle \ : \ u \in \D(I) \biggr\}, \quad \text{where} \quad J(v) = \frac{1}{p} \int_I w \, \abs{v}^p\, dx,
	\end{equation}
	with $w = \hat{w}$, $m=2$ and $p=2$; $v$ may be any element from $L^p(I)$. A reasonable extension of $\D(I)$ to a Banach space must be proposed along with a lower semi-continuous relaxation of the convex functional $J\bigl(D^m \, \cdot \, \bigr)$ that ought to be coercive in this space. Treating the function $w$ as a weight inevitably we shall find ourselves in a version of weighted Sobolev space. Such space may be variously defined: in the pioneering work of \cite{kufner1984} the definition relies on the notion of weak derivatives, while the Sobolev norm includes norms in weighted Lebesgue space $L^p_w(I)$. The main result of the paper states that such weighted Sobolev space is complete if and only if $1/w^{1/(p-1)} \in \Lloc{I}$. The latter, so-called $B_p$-condition furnishes the essential embedding $L^p_w(I) \hookrightarrow \Lloc{I}$; the reader may also compare \cite{opic1989}. The works, for instance, \cite{kilpelainen1997}, \cite{gol2009}, or \cite{cavalheiro2008weighted} rest upon a stronger $A_p$ (or Muckenhoupt) condition which guarantees that the complete weighted Sobolev space may be indifferently defined via weak derivatives or completion of the space of smooth functions. The optimal width function $\hat{w}$ is easily checked to violate both conditions $A_p$ and $B_p$ in case of $p=2$, which is due to degeneration of $\hat{w}$ around $x_1,x_2$ at linear rate. Weights $w$ that verify $w^{1/(p-1)} \notin \Lloc{I}$ shall be the title \textit{strongly degenerate weights} and for those weights we are forced to define the weighted Sobolev space directly by completion.
	
	The relaxation of energy functionals of the form $J\bigl(D \, \cdot \, \bigr)$ (in the first-order case $m= \nolinebreak1$) was the topic of paper by \cite{bouchitte1997}, except that the integration in $J$ was carried out with respect to arbitrary compactly supported Radon measure $\mu$ on $\R^d$. By means of measures we may capture not only degeneracies of density of a structure, but also the somewhat opposite singularities in the form of lower-dimensional elements such as curves or surfaces. The central idea of the work revolves around definition of the space tangent to measure $\mu$ at a point $\Tmuz(x) \subset \R^d$ and the gradient tangential to measure $D_\mu u$ which for smooth functions is almost everywhere computed as orthogonal projection of classical gradient $D u (x)$ onto $\Tmuz(x)$. The completion of the space of smooth functions with respect to norm $\norm{u}_{\Lpmuz}+ \norm {D_\mu u}_{\Lpmuz}$ receives the name of Sobolev space with respect to measure $\Hpmuz$. In the work \cite{bouchitte1997} the space $\Hpmuz$ is employed to cope with geometrical measures of the form $\mu = \sum_{i=1}^n w_i \, \Ha^{k(i)} \mres S_i$, where $S_i$ is a $k(i)$-dimensional $C^2$-manifold and the weight $w_i$ is piece-wise constant. Simultaneously authors prepare the background for the later paper \cite{bouchitte2001}: they have understood that, since the optimal structure in $\Rd$ ends up being a measure $\mu$, the theory of elasticity of such structures must be first developed. The present work together with a more structural-mechanics-oriented paper \cite{bolbotowski2020a}, play an analogical role for the work on beam and grillage optimization in \cite{bolbotowski2020b}. We shall conveniently utilize the theory of the Sobolev space with respect to measure in one-dimensional interval $I \subset \R$: we choose $\mu = \muw := w \, \Leb^1 \mres I$ and the first-order weighted Sobolev space defined by completion is at our disposal as the space \nolinebreak $\Hnukp{1}$.
	
	The idea of the weighted Sobolev space $\Hnukp{1}$, however, is not original and has been already applied in the paper by \cite{louet2014}. Its main result focuses on characterization of the tangent space $\Tmuw(x)$: it trivially equals $\{0\}$ for a.e. point $x$ in the so called \textit{critical set} $\Icr{p}(w)$ and becomes full space $\R$ whenever the point $x \in I$ lies outside this set. The elements of the closed set $\Icr{p}(w)$ are precisely those points $x_0$ that for every $\eps>0$ yield $\int_{B(x_0,\eps)} 1/w^{1/(p-1)} dx = \infty$, for instance $x_1, x_2 \in \Icr{2}(\hat{w})$ in Fig. \ref{fig:clamped_beam}(a). A clear link with the $B_p$-condition has allowed the author to infer that every function $u \in \Hnukp{1}$ is an element of the classical Sobolev space $W^{1,1}_\mathrm{loc}\bigl( I \backslash \Icr{p}(w) \bigr)$ and that the tangential derivative $\tgradnuk{} u \in \Lpnu$ equals the distributional derivative $D u$ in the open set $I \backslash \Icr{p}(w)$. In particular $u$ is continuous outside $\Icr{p}(w)$, while in each critical point jump discontinuity may occur possibly rendering $D u$ an irregular distribution in the whole interval $I$.
	
	The elasticity problem of the beam with the width distribution $w \in L^1(I)$ requires handling a second-order weighted Sobolev space. The topic of second-order differentiation with respect to measures appeared in \cite{bouchitte2003}, again in a broader setting of $d$-dimensional space where additional issues arise -- we are forced to manipulate an independent Cosserat field that for smooth functions corresponds to $D^\perp_\mu u$, namely the part of the gradient that is orthogonal to $\mu$. On top of that the authors assume a Poincar\'{e}-like inequality condition on the measure $\mu$ that for the strongly degenerate weights considered herein clearly cannot hold, since in particular we allow $w$ to vanish on sets with non-zero Lebesgue measure. We begin our effort in Section \ref{sec:Sobolev_real_line_def} where, upon imposing a very mild assumption on weights (that are met by any $w \in BV(I)$) we inductively define the weighted Sobolev space of any order $\Hnukp{m}$ drawing upon the already developed theory of $\Hnukp{1}$: the elements $u$ in $\Hnukp{m}$ are, roughly speaking, those functions $u \in \Hnukp{m-1}$ for which $\tgradnuk{m-1} u \in \Hnukp{1}$. The lower semi-continuous regularization of $J\bigl( D^m \, \cdot \, \bigr): \D(I) \rightarrow \R$ now reads $J\bigl(\tgradnuk{m}\,\cdot\,\bigr) : \nolinebreak \Hnukp{m} \rightarrow \R$, although the proof of this simple fact was moved to \cite{bolbotowski2020a}. Based on the results of \cite{louet2014}, the inductive definition of the space $\Hnukp{m}$ allows to expect that $\Hnukp{m} \hookrightarrow W^{m,1}_\mathrm{loc}\bigl( I \backslash \Icr{p}(w) \bigr)$, whereas in the set $\Icr{p}(w)$ the functions $u \in \Hnukp{m}$ may suffer discontinuities of the tangential derivative $\tgradnuk{m-1} u$. The big question of this work concerns (dis)continuity of the lower derivatives: how to judge, for instance in case of $m= \nolinebreak 2$, whether at a given point $x_0 \in I$ the function $u \in \Hnukp{2}$ itself has to be continuous or may admit jump-type discontinuity instead?
	
	For the tools that examine the continuity of functions $u \in \Hnukp{m}$ we seek inspiration in structural mechanics. Up to change of the sign, a version of the problem dual to \eqref{eq:beam_primal} reads
	\begin{equation}
	\label{eq:beam_dual}
	\inf \left\{ J^*(M)  :  M \in L^{p'}\!(I), \ D^m M + \tilde{f} =0 \right\} , \quad \text{where} \quad J^*(M) = \frac{1}{p'} \int_I \frac{\abs{M}^{p'}}{w^{\,p'/p}}\,dx
	\end{equation}
	which upon localization for $m=2$ and $p=2$ gives a classical stress-based variational formulation for linear elasticity in beams; the equation $D^m M +\tilde{f} = 0$ must be understood in the distributional sense. In the context of a beam, $M$ is the so-called bending moment function that point-wise describes the stress; the Legendre-Fenchel transform $J^*(M)$ quantifies the complementary elastic energy of the beam. A key remark to make is that the transform was deliberately derived with respect to the duality pairing $\bigl\langle L^p(I),L^{p'}(I) \bigr\rangle$ and not, seemingly natural, pairing $\bigl\langle\Lpnu,L^{p'}_\muw \bigr\rangle$. It is clear that the solution of the problem \eqref{eq:beam_dual} must be sought among those $M$ that furnish finite energy $J^*(M)$. In case of the optimal beam from Fig. \ref{fig:clamped_beam}(a) where for the two singular points $x_i = x_1, x_2$ the integral $\int_{B(x_i,\eps)} 1/\hat{w} \, dx$ equals infinity with arbitrarily small $\eps$, the candidate bending moment $M$ must necessarily tend to zero at those points in case when $p=2$. In mechanics of beam and frame systems it is well-established (the reader is encouraged to look into Chapter I in \cite{lewinski2019} or the book by \cite{rozvany1976}) that enforcing zero bending moment "at a point" can be structurally realized by inserting the so-called hinge -- this may be treated as the very definition of a hinge in its stress (or dual) version. Primally, or kinematically, a hinge allows a jump in rotation that is represented by the derivative of the displacement function $u$: should the hinges be assumed at $x_1$ and $x_2$, the formal methods of structural mechanics deliver the solution of the equation "$D^2\bigl( \hat{w} D^2 u \bigr)= \tilde{f}$" that is displayed in Fig. \ref{fig:clamped_beam}(b). For an arbitrary width/weight $w \in L^1(I)$ this engineering reasoning coincides with the mathematical results given earlier: for a function $u \in H^{2,2}_{\muw}$ its tangential derivative $\tgradnuk{} u$ may admit jumps at critical points $x_0 \in \Icr{2}(w)$ or, by definition, points yielding $\int_{B(x_0,\eps)} 1/w \, dx = \infty \ \ \forall \eps>0$. Encouraged, we dig deeper into the duality links known in mechanics of beams. We wish to verify when at a point $x_0$ a function $u \in H^{2,2}_{\muw}$ itself may admit a jump. In terms of kinematics, a beam with such a deformation $u$ disconnects at $x_0$ entirely and thus, dually, no force interaction can occur. Apart from the bending moment $M$ the beam is also subject to action of the shear force that is defined distributionally through $T := D M$. Therefore, according to mechanics, "at the point $x_0$" where $u$ jumps the derivative $D M$ must vanish, in particular it should be infeasible to have $M(x) = x - x_0$ in a neighbourhood of $x_0$. We recall that the condition forcing zero bending moment at $x_0$ has been above represented in two languages: i) structurally as a hinge; ii) mathematically through the dual variational problem \eqref{eq:beam_dual} as the condition $\int_{B(x_0,\eps)} 1/w \, dx =\infty \ \ \forall \eps>0$. The constraint on the shear force $T$, or equivalently disqualification of the bending moment being locally equal to $M(x) = x - x_0$, was interpreted in terms of mechanics as a "full cut" at $x_0$. This is an approach i), by analogy in the mathematical setting ii) we must introduce a criterion $\int_{B(x_0,\eps)} \frac{\abs{x-x_0}^2}{w(x)} \, dx = \infty \ \ \forall \eps>0$. If we are to trust the mechanics-based reasoning, the latter condition should therefore allow a jump-type discontinuity of a function $u \in H^{2,2}_{\muw}$. For this integral condition to hold at $x_1$ or $x_2$ in the optimal beam from Fig. \ref{fig:clamped_beam}(a), the width $\hat{w}$ would need to degenerate around those points at least at the rate of $\abs{x-x_0}^3$. This work is essentially aimed at rigorous verification of this idea, also for arbitrary exponent $p \in \prange$ and order $m \in \mathbb{N}_+$.
	
	The excursion that we had through duality in mechanics of structures suggests that in order to examine continuity of functions $u$ in higher order weighted Sobolev space $\Hnukp{m}$ we must extend the definition of the critical set to any order $\al \geq 0$. For $p \in (1,\infty)$ it shall read
	\begin{equation*}
	\label{eq:def_Icrap_0}
	\cIcr{\al,p}(w) := \biggl\{ x_0 \in \bar{I} : \forall\, \eps>0  \int\limits_{\bar{I} \cap B(x_0,\eps)} \! \left(\frac{\abs{x-x_0}^\al}{\bigl(w(x)\bigr)^{1/p}}\right)^{p'} \! dx = \infty \biggr\},
	\end{equation*}
	namely the higher order $\al$, the faster degeneracy of $w$ around $x_0$ is required for the point $x_0$ to belong to $\cIcr{\alpha,p}(w)$. The results of the present paper may be loosely summed up as follows: for an interval $I = \Iaa$, a weight $w \in L^1(I)$ satisfying $\muw\bigl( \cIcr{0,p}(w) \bigr)=0$, given an order $m \in \mathbb{N}_+$ and an exponent $p \in \prange$ there hold
	\begin{enumerate}[(i)]
		\item if $x_0 \notin \cIcr{m-1,p}(w)$ then every function $u \in \Hnukp{m}$ has a $\muw$-a.e. equal representative that is continuous (Section \ref{sec:sufficient_conditions});
		\item if \textit{stability} of the weight $w$ is assumed, then $x_0 \in \cIcr{m-1,p}(w)$ implies that a step function $\ustep = \mathbbm{1}_{(x_0,\ap)}$ is an element of $\Hnukp{m}$ (Section \ref{sec:necessary_conditions});
		\item for stable weights the trace operator defined on the space of smooth functions as $\Tr{}\, u\,(a_+) := u(a_+)$ extends continuously to the space $\Hnukp{m}$ if and only if $\ap \notin \nolinebreak \cIcr{m-1,p}(w)$ and the same applies to $\am$ (Section \ref{sec:trace_extension}).
	\end{enumerate}
	
	The point (i) will be obtained by rather standard method: its core is the embedding $\Hnukp{m-1} \hookrightarrow \Lloc{ I \backslash \cIcr{m-1,p}(w)}$ given in Theorem \ref{thm:embedding} which, upon acknowledging the inductive definition of the higher order Sobolev space $\Hnukp{m}$, in turn yields $\Hnukp{m} \hookrightarrow W^{1,1}_\mathrm{loc}\bigl( I \backslash \cIcr{m-1,p}(w) \bigr)$. The trick behind the first embedding is almost the very same as in the proof of $\Lpnu \hookrightarrow \Lloc{ I \backslash \cIcr{0,p}(w)}$ from the work of \cite{kufner1984}, except that additional, quite simple estimate of $\int_{B(x_0,\eps)} \abs{\phi} dx$ by the integral $\int_{B(x_0,\eps)} \abs{D^{m-1} \phi (x)}\abs{x-x_0}^{m-1} dx$ is first needed for smooth functions $\phi$ with compact support in $B(x_0,\eps)$.
	
	The central part of the work revolves around the point (ii): it is here that we draw upon the theory of beam structures and utilize the Legendre-Fenchel transformation between the energy functional $J(v) = \frac{1}{p} \int_I w \, \abs{v}^p\, dx$ and, recalling that the duality pairing is chosen as $\pairing{v,v^*}:= \int_I v \, v^* \, dx$, the functional  $J^*(v^*) = \frac{1}{p'} \int_I \frac{\abs{v^*}^{p'}}{w^{\,p'/p}}\,dx$. Since the space $\Hnukp{m}$ is defined via completion of the space of smooth functions, proving that $\ustep = \mathbbm{1}_{(x_0,\ap)}$ belongs to $\Hnukp{m}$ requires pointing to a sequence $\ustep_h$ of smooth functions that converges to $\ustep$ in the $\Hnukp{m}$-norm. We will make an effort to show that this is possible only if all the tangential derivatives $\tgradnuk{k} \ustep$ for $k\in \{1,\ldots,m\}$ are zero in $\Lpnu$, which is non-intuitive as the first distributional derivative $D \ustep$ in the domain $I$ equals Dirac delta measure at $x_0$. Loosely speaking, if indeed $\ustep \in \Hnukp{m}$, all the distributional derivatives $D \ustep,\ldots,D^m \ustep$ must be killed by the weight $w$ degenerating around $x_0$: the higher the order $m$ the faster the weight must degenerate which, as we shall show, is incorporated in the condition $x_0 \in \cIcr{m-1,p}(w)$. Our technique will be to find a sequence of smooth functions $\hphi_h$ that approximates Dirac delta measure at $x_0$; the sequence that we originally seek may be then defined as $\ustep_h(x) := \int_{-\infty}^x \hphi(y) \, dy$. The full success comes when all the derivatives $D \hphi,\ldots, D^{m-1}\hphi$ converge to zero in $\Lpnu$. The problem of finding such a sequence $\hphi$ will be reformulated as a variational problem that involves the energy functional $J$. At this point, in Theorem \ref{thm:dirac_delta_approx_crit_point}, general duality theory comes into the picture and a dual variational problem emerges where we minimize the conjugate functional $J^*(v^*)$ over functions $v^* \in L^{p'}(I)$ satisfying the distributional constraint $D^{m-1} v^* \geq 1$. We spot that this constraint together with the formula for $J^*$ lie closely to definition of the set $\cIcr{m-1,p}(w)$ and ultimately we infer that the dual infimum must be non other than infinity for $x_0$ in this set. Through a chain of equivalences we infer existence of the sequence $\hphi$ that approximates Dirac delta at $x_0$ and verifies $\norm{D^{m-1}\hphi}_\Lpnu \rightarrow 0$, yet only for derivative of the highest order considered. To obtain convergence of $D^k \hphi$ to zero for lower $k \geq 0$ a Poincar\'{e}-like inequality must be recovered in some neighbourhood of $x_0$. For that purpose an additional assumption on the weight $w$ is needed and we decide to propose a condition that we call \textit{stability}: for every critical point $x_0$ the degeneration to zero is enforced to be monotonic in some neighbourhood of $x_0$. We have, in fact, sketched the proof of the point (ii). The last part of Section \ref{sec:necessary_conditions} is devoted to provide some additional insight into relations between: 1) \nolinebreak criticality of a point $x_0$; 2)  \nolinebreak occurrence of the step function $\ustep = \mathbbm{1}_{(x_0,\ap)}$ in the weighted Sobolev space $\Hnukp{m}$; 3) \nolinebreak a \nolinebreak series of variational problems and their duals. The true purpose of Theorem \ref{thm:jump_summary} put forward therein, aside from its summarizing nature, is to justify the extra stability assumption enforced on the weight. The rather long proof of the theorem ends with an example of a non-stable weight $\wcrit \in L^\infty\bigl( (-1,1) \bigr)$ such that $x_0 =0 \in \Icr{1,p}(w)$ and notwithstanding this we show that $\ustep = \mathbbm{1}_{(0,1)} \notin \Hnukp{2}$. It will appear that the choice of $\wcrit$ is not trivial as it must admit some cunning oscillation about the point $x_0$. Eventually we establish that for $w \in L^\infty(I)$ the point (ii) does not hold in full generality and \textit{some} assumption is essential to eliminate the varying of the weight. Upon realizing that the condition $w \in BV(I)$ does not suffice, we can in good conscience hold on to the proposed assumption of stability, i.e. local monotonicity around critical points.
	
	The point (iii) virtually builds upon results from Sections \ref{sec:sufficient_conditions} and \ref{sec:necessary_conditions} where the key theorems were adopted for the scenario of $x_0$ being one of the boundary points $\am$ or $\ap$. In Section \nolinebreak \ref{sec:trace_extension} we also put forward Theorem \ref{thm:trace_approximation} that allows to approximate $u \in \Hnukp{m}$ by a smooth $u_\eps$ that has prescribed boundary values of the function and all its derivatives at $\am$ and $\ap$. This statement will play a role of a lemma in the next work \cite{bolbotowski2020a} from the present author -- it will allow constructing a smooth approximation of a function in weighted Sobolev space defined on the graph in $\Rd$. The work concludes with Section \nolinebreak \ref{sec:conclusions} where we revisit the problem of elasticity in beams and in detail we demonstrate how to apply the developed theory of weighted Sobolev spaces $\Hnukp{m}$ to solving variational problems where the minimized energy functional is a degenerate, weighted integral. 
	\vspace{2mm}
	
	\noindent \textbf{Notation:} Although throughout the text we tend to remind the notation, we agree to some most basic symbols here already. By $\D(\U)$ and $\D'(\U)$ we will denote the space of compactly supported test function and distributions in an open set $\U$. For differentiation of order $k$ we use symbol $D^k u$ indifferently on the real line $\R$ or in $d$-dimensional space $\Rd$, both for classical differentiation and the distributional one. With $\am,\ap\in\R$ we will denote an open interval $I=\Iaa \subset \nolinebreak \R$. For the exponent $p \in [1,\infty]$, $p'=p/(p-1)$ will stand for its H\"{o}lder conjugate. By $L^p$ and $L^p_\mu$ we understand standard Lebesgue spaces with respect to Lebesgue measure and, respectively, arbitrary compactly supported Radon measure $\mu$. For a subset $A \in \Rd$ the symbol $\mathbbm{1}_A$ will denote the characteristic function of $A$, while for the indicator function we will use $\mathbb{I}_A$. The set of positive natural numbers will be written as $\mathbb{N}_+$ whilst $\mathbb{N}$ shall include zero.
	
	\section{Definition of higher order weighted Sobolev space on the real line for strongly degenerate weights}
	
	\label{sec:Sobolev_real_line}
	
	\subsection{Few words on defining weighted Sobolev space via weak derivatives}
	\label{sec:Sobolev_real_line_intro}
	
	Given a domain $\Omega \subset \Rd$ and a weight that is a non-negative measurable function $w:\Omega \rightarrow \R_+ \cup \{0\}$, a natural way of defining a weighted Sobolev space employs the notion of weak derivatives. One of the pioneering discussions on the correctness of such definition depending on the weight $w$ may be found in \cite{kufner1984}. In this setting we say that a measurable function $u:\Omega \rightarrow \R$ belongs to a weighted Sobolev space $W^{1,p}_w(\Omega)$ for $p \in \prange$ if and only if: $u \in L^p_w(\Omega) \cap \Lloc{\Omega}$ and the distributional derivative $D u \in L^p_w(\Omega;\Rd)$. The symbol $L^p_w(\Omega)$ stands for the weighted Lebesgue space endowed with the norm $\norm{u}_{L^p_w(\Omega)} =\bigl( \int_\Omega w(x) \abs{u(x)}^p dx \bigr)^{1/p}$. Consequently $W^{1,p}_w(\Omega)$ is also a normed space with $\norm{u}_{W^{1,p}_w(\Omega)} = \bigl(\norm{u}_{L^p_w(\Omega)}^p+\norm{D u}_{L^p_w(\Omega)}^p \bigr)^{(1/p)}$.
	
	Such Sobolev space may not be complete, unless we impose a condition that controls the level of the weight's degeneracy. In \cite{kufner1984} we find a criteria for the completeness of $W^{1,p}_w(\Omega)$ that is called a $B_p$-condition and for $p \in (1,\infty)$ it reads
	\begin{equation}
	\label{eq:Bp_condition}
	\left(B_p\right): \qquad  \frac{1}{w^{p'/p}} = \frac{1}{w^{1/p-1}} \in \Lloc{\Omega},
	\end{equation}
	where $p'$ is the H\"{o}lder conjugate exponent to $p$. Henceforward we will agree that for $\alpha \geq 0$ and $\beta = 0$ we have $\alpha/\beta = 0$ if $\alpha = 0$ and $\alpha/\beta = \infty$ if $\alpha > 0$. Therefore, for any $p\in (1,\infty)$, the $B_p$-condition \eqref{eq:Bp_condition}  implies that the weight $w$ is almost everywhere positive. The condition may be extended to the case of $p = 1$ where we shall understand that it holds if and only if for every compact set $K \subset \Omega$ the weight $w$ is essentially bounded from below by a positive constant $C = C(K) >0$.
	
	We arrive at an implication that happens to be crucial for proving the completeness of $W^{1,p}_w(\Omega)$ for weights $w$ satisfying the $B_p$-condition:
	\begin{equation}
	\label{eq:standard_embedding}
	\frac{1}{w^{p'/p}} \in \Lloc{\Omega} \qquad \Rightarrow \qquad L^p_w(\Omega) \hookrightarrow \Lloc{\Omega}.
	\end{equation}
	The above easily follows from the H\"{o}lder inequality; we display the estimate below for we will repeatedly use a variation of it. Let $K$ be any compact set contained in $\Omega$, then for any measurable function $u$
	\begin{equation}
	\label{eq:B_p_estimate}
	\int_K \abs{u} \, dx = \int_K \biggl(w^{1/p}\abs{u}\biggr) \biggl(\frac{1}{w^{1/p}}\biggr)  dx \leq \left( \int_K w\, \abs{u}^p dx\right)^{1/p} \left( \int_K \frac{1}{w^{p'/p}} dx\right)^{1/p'},
	\end{equation}
	which is valid also for $p=1$ provided the last factor is rewritten as $\norm{1/w}_{L^\infty(K)}$.
	
	It is worth mentioning that in \cite{kufner1984} we find some denseness results for the space of smooth functions in the, adequately defined, space $W^{1,p}_{w,0}(\Omega)$. Furthermore, for instance in \cite{gol2009}, a stronger condition on the degeneracy of $w$ is imposed, which is called a Muckenhoupt or $A_p$-condition. It allows us to indifferently define weighted Sobolev space through both weak derivatives and completion of the space of smooth functions, namely the $A_p$-condition yields $W^{1,p}_{w}(\Omega) = H^{1,p}_{w}(\Omega)$.
	
	\subsection{The notion of Sobolev space with respect to measure as a point of departure in defining the higher order weighted Sobolev spaces for strongly degenerate weights}
	\label{sec:Sobolev_real_line_def}
	
	Throughout the rest of the Section \ref{sec:Sobolev_real_line} we will work in an open bounded interval on the real line denoted by $I = \Iaa \subset \nolinebreak \R$. We assume a weight that is a non-negative integrable function, i.e. $w \in L^1(I)$. The exponent $p$ may be any real number from $\prange$. For a function $u \in C^k(I)$ we will denote its derivative by $D^k u$ and treat it again as a scalar function on $\R$. By $\D(\R)$ we will denote the space of smooth functions compactly supported in $\R$: in particular $u \in \D(\R)$ may not vanish on the boundary of $I$. 
	
	The setting that we put ourselves in rules out the possibility of defining weighted Sobolev space through weak derivatives. In extreme case we allow the weight $w$ to vanish on subsets of $I$ of non-zero Lebesgue measure, for instance on some subinterval, which clearly violates the $B_p$-condition \eqref{eq:Bp_condition}. This scenario is, however, easy to handle, since such a subinterval clearly splits the domain $I$ into two. We shall be more concerned with the case when the weight degenerates to zero around a certain point $x_0 \in I$, e.g. $w(x) = \abs{x-x_0}^\gamma$ with $\gamma>0$. The weights $w \in L^1(I)$ that do not satisfy the $B_p$-condition will be herein referred as the title \textit{strongly degenerate weights}. Upon defining a weighted Sobolev space for such weight, it will be of particular interest to examine the continuity conditions for functions belonging to this space.
	
	For the weights that do not satisfy the $B_p$-condition the suitable method for defining weighted Sobolev spaces is by completion of the space of smooth functions. Such approach falls into a particular theory of Sobolev spaces with respect to measure that was originated in \cite{bouchitte1997}. It is based on the idea of space tangent to a measure at a point and, the inextricably linked, notion of tangential derivative (gradient). Below we quickly review the basics of the theory drawing upon a later work \cite{bouchitte2003}, where the tangent space at $x$ stems from the firstly-defined space that is normal to measure at this point. For a moment we will work with an arbitrary Radon measure $\mu \in \mathcal{M}_+(\Rd)$ in $d$-dimensional space $\R^d$ in order to grasp the true purpose of the theory. Next we will localize it for the one-dimensional setting and thus for a more comprehensive coverage in case of wider classes of measures $\mu$ in $\Rd$ the reader is referred to the aforementioned works and others: \cite{fragala1999}, \cite{bouchitte2002}, \cite{rybka2018}.
	
	For any $p \in \prange$ by $L^p_\mu\bigl(\R^d;V \bigr)$ we see the standard $V$-valued Lebesgue space with respect to $\mu$; we agree for the following abbreviations: $\Lpmuz := L^p_\mu\bigl(\R^d;\R \bigr)$ and $\Lpmudz := L^p_\mu\bigl(\R^d;\R^d \bigr)$. Independently of $V$ the norm in $L^p_\mu\bigl(\R^d;V \bigr)$ shall be denoted by $\norm{\, \cdot \,}_{\Lpmuz}$. We start by defining the linear space $G := \bigl\{ \left(u, \nabla u\right) \ : \ u\in \D(\Rd) \bigr\}$, where $\D(\Rd)$ denotes the space of compactly supported smooth functions. Below by $\overline{G}$ we will see the closure of $G$ in the Cartesian product $\Lpmuz \times \Lpmudz$. We introduce a subspace of $\Lpmudz$ that receives an interpretation of those vector fields that are point-wise orthogonal to measure $\mu$:
	\begin{equation*}
	\mathcal{N}^p_\mu := \left\{ v \in \left(L^{p}_\mu\right)^d : (0,v) \in \overline{G}   \right\}.
	\end{equation*}
	The space $\mathcal{N}^p_\mu$ enjoys an essential \textit{stability} property, see Lemma A.1 in \cite{bouchitte2003} for details. It allows us to infer existence of a $\mu$-measurable multifunction $N^p_\mu$ that point-wise gives a linear subspace of $\Rd$ and satisfies: $v \in \mathcal{N}^p_\mu \Leftrightarrow v(x) \in N^p_\mu(x)$ for $\mu$-a.e. $x$. The space tangent to the measure $\mu$ at a point $x$ can readily be defined by means of orthogonal complement:
	\begin{equation*}
	\Tmuz(x) := \left( N^p_\mu(x) \right)^\perp \qquad \text{for } \mu\text{-a.e. } x.
	\end{equation*}
	For $\mu$-a.e. $x$ by $\Pmuz(x): \Rd \rightarrow \Rd$ we will mean the operator of orthogonal projection onto the subspace $\Tmuz(x) \subset \Rd$ (we shall omit the dependence of $\Pmuz$ on the exponent $p$ although it may factually occur). The next step involves defining for smooth functions the  \textit{derivative tangential to} $\mu$ at $\mu$-a.e. point $x$:
	\begin{equation*}
	\tgradz u (x) = \Pmuz(x) \bigl( D u (x)\bigr) \qquad \text{for } u\in \D(\Rd).
	\end{equation*}
	where the classical derivative $D u$ may be viewed as gradient, i.e. we shall see $\tgradz u$ as an element of $\Lpmudz$. More precisely we introduce an unbounded operator on $\Lpmuz$ with the space $\D(\Rd)$ as its domain: $\tgradz : \D(\Rd) \subset \Lpmuz \rightarrow \Lpmudz$. Having the stability property of $\mathcal{N}_\mu^p$ at our disposal we may give the closability result:
	\begin{proposition}
		\label{prop:closabolity_first_order}
		The unbounded operator $\tgradz : \D(\Rd) \subset \Lpmuz \rightarrow \Lpmudz$ is closable: given a sequence $u_h \in \D(\Rd)$ such that $u_h \rightarrow u$ in $\Lpnu$ and $\tgradz u_h \rightarrow v$ in $\Lpmudz$ for some $v \in \Lpmudz$, there necessarily must hold: $v=0$ in $\Lpmudz$.
	\end{proposition}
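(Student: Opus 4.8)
The plan is to exploit the linear-subspace structure of $\overline{G}$ together with the pointwise characterization of $\mathcal{N}^p_\mu$ supplied by its stability property. Closability of $\tgradz$ amounts exactly to the displayed implication once the limit of $u_h$ is read as $0$: it suffices to show that if $u_h \in \D(\Rd)$ satisfies $u_h \to 0$ in $\Lpmuz$ and $\tgradz u_h \to v$ in $\Lpmudz$, then $v=0$. I would establish this by proving that $v$ lies $\mu$-a.e. simultaneously in the tangent space $\Tmuz(x)$ and in its orthogonal complement $N^p_\mu(x)$, and must therefore vanish.

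First I would check that $v(x) \in \Tmuz(x)$ for $\mu$-a.e.\ $x$. Indeed, for every $h$ one has $\tgradz u_h(x) = \Pmuz(x)\bigl(Du_h(x)\bigr) \in \Tmuz(x)$ by the very definition of the tangential derivative, and $\Tmuz(x)$ is a (closed) linear subspace of $\Rd$. Passing to a subsequence along which $\tgradz u_h \to v$ $\mu$-a.e., membership in the closed subspace is preserved in the limit, giving $v(x) \in \Tmuz(x)$ $\mu$-a.e.

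Next I would show $v \in \mathcal{N}^p_\mu$, i.e.\ $v(x) \in N^p_\mu(x)$ $\mu$-a.e. Here I decompose the classical gradient as $Du_h = \tgradz u_h + n_h$, where $n_h := \bigl(I - \Pmuz\bigr)Du_h$ is the normal component; pointwise $n_h(x) \in N^p_\mu(x)$, and since $\Pmuz$ is a pointwise contractive orthogonal projection, $n_h \in \Lpmudz$. By the characterization $v \in \mathcal{N}^p_\mu \Leftrightarrow v(x) \in N^p_\mu(x)$ $\mu$-a.e.\ one obtains $n_h \in \mathcal{N}^p_\mu$, that is $(0,n_h) \in \overline{G}$. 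Because $(u_h, Du_h) \in G \subset \overline{G}$ and $\overline{G}$ is a linear subspace, subtracting yields $(u_h, \tgradz u_h) = (u_h, Du_h) - (0,n_h) \in \overline{G}$. Finally $(u_h, \tgradz u_h) \to (0,v)$ in $\Lpmuz \times \Lpmudz$ and $\overline{G}$ is closed, so $(0,v) \in \overline{G}$, which is precisely $v \in \mathcal{N}^p_\mu$.

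Combining the two inclusions, for $\mu$-a.e.\ $x$ we have $v(x) \in \Tmuz(x) \cap N^p_\mu(x) = \bigl(N^p_\mu(x)\bigr)^\perp \cap N^p_\mu(x) = \{0\}$, whence $v=0$ in $\Lpmudz$. The only delicate point — and the place where the stability property of $\mathcal{N}^p_\mu$ (Lemma A.1 in \cite{bouchitte2003}) is indispensable — is the passage from the pointwise condition $n_h(x) \in N^p_\mu(x)$ to the global statement $n_h \in \mathcal{N}^p_\mu$; without the measurable multifunction $N^p_\mu$ selecting the normal subspaces, this identification, and hence the whole argument, would not be available. Everything else reduces to the linearity and closedness of $\overline{G}$ together with the a.e.-convergent subsequence extracted in the first step.
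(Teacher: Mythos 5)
Your argument is correct: the decomposition $Du_h=\tgradz u_h+n_h$ with $n_h(x)\in N^p_\mu(x)$, the use of the stability property to upgrade this pointwise inclusion to $(0,n_h)\in\overline{G}$, and the conclusion $v(x)\in \Tmuz(x)\cap N^p_\mu(x)=\{0\}$ together give a complete proof. The paper itself does not prove this proposition but defers to \cite{bouchitte1997} and \cite{bouchitte2003}, and your reasoning is essentially the standard argument found there, correctly isolating the stability of $\mathcal{N}^p_\mu$ as the one non-formal ingredient.
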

	For the proof one may see for instance \cite{bouchitte1997} or \cite{bouchitte2003}. By the first order Sobolev space $\Hpmuz$ with respect to measure $\mu$ we define a domain of the closure of $\tgradz$ (denoted by the same symbol $\tgradz$). Endowing $\Hpmuz$ with the graph norm
	\begin{equation*}
	\norm{u}_{\Hpmuz} := \left( \norm{u}_{\Lpmuz}^p + \norm{\tgradz u}_{\Lpmuz}^p \right)^{1/p}
	\end{equation*}
	renders it Banach for $p \in \prange$. Moreover it is reflexive whenever $p \in (1,\infty)$, see \cite{bouchitte1997}. The space $\Hpmuz$ can be readily seen as the completion of the space of smooth functions $\D(\Rd)$ with respect to the norm given above. By definition $u \in \Lpmuz$ is an element of the Sobolev space $\Hpmuz$ if and only if there exist a sequence $u_h \in \D(\Rd)$ and a vector field $v \in \Lpmudz$ such that: $u_h \rightarrow u$ in $\Lpnu$ and $\tgradz u_h \rightarrow v$ in $\Lpmudz$. One may show that $v$, should it exist, is unique and in fact defines $\tgradz u := v$. We make, however, an important observations:
	\begin{remark}
		\label{rem:strong_weak}
		Since $\Lpmuz \times \Lpmudz$ is Banach and $G$ is its linear subspace and thus a convex subset, the closure $\overline{G}$ may be indifferently taken with respect to norm or weak topology in $\Lpmuz \times \Lpmudz$. As a consequence we can weaken the conditions for $u \in \Lpmuz$ to be an element of $\Hpmuz$ as follows:
		\begin{equation*}
		u \in \Hpmuz \quad \Leftrightarrow \quad \exists \, u_h\in \D(\Rd) \text{ such that } 	\left \{
		\begin{array}{ll}
		u_h \rightharpoonup u \ &\text{in } \Lpmuz, \\
		D u_h \rightharpoonup v \ & \text{in } \Lpmudz \text{ for some } v \in \Lpmudz.
		\end{array} \right.
		\end{equation*}
	\end{remark}
	The perspective of the Sobolev space $\Hpmuz$ as a completion of smooth functions justifies using the letter $H$ in its symbol, rather than $W$, see \cite{meyers1964} for notation. A discussion on defining a Sobolev space with respect to measures via the notion of weak derivatives can be for instance found in \cite{bouchitte2002}. Here we decide not to dwell on this topic, we only mention that the two Sobolev spaces $\Hpmuz$ and $W^{1,p}_\mu$ may diverge in general.
	
	We are ready to return to the one-dimensional case: with the given weight $w \in L^1(I)$ we conveniently enter the theory of Sobolev space with respect to measure through simply defining
	\begin{equation*}
	\muw := w \ \Leb^1 \mres I,
	\end{equation*}
	namely $\muw$ has a density $w$ with respect to Lebesgue measure restricted to the interval $I$. Since the theory displayed above was tailored for an arbitrary Radon measure, the first-order Sobolev space $\Hnukp{1}$ is already correctly defined and becomes precisely the weighted Sobolev space defined by completion of $\D(\R)$. It is crucial to remember that functions in $\D(\R)$ may admit non-zero values at $\am$ or $\ap$ and so may the functions in $\Hnukp{1}$ (provided the boundary values are meaningful). Our goal in this subsection is to follow this approach and define higher-order Sobolev spaces $\Hnukp{m}$ for arbitrary $m\in\mathbb{N}_+$. First we need to look into the structure of the first-order space, specifically we require a characterization of the tangent space $\Tmuw(x)$ at a point $x \in \R$; it is clear that on the real line this space can be either $\R$ or $\{0\}$.
	
	The very characterization of $\Tmuw(x)$ was the main topic of the work by J. Louet. In his work \cite{louet2014} a more general setting was approached as the measure addressed could be any Radon measure supported in $\bar{I}$, that is $\mu = \muw + \mu_s$, where $\mu_s$ is the singular part. For our purposes $\mu = \muw$ suffices and below we will quote results from \cite{louet2014} adapted for this simpler scenario. In the referenced work only the exponent $p=2$ was taken into account, however, all the proofs simply extend to the case of $p \in [1,\infty)$.
	
	After \cite{kufner1984} we define a subset of $I$ containing those points $x_0$ that decides the violation of the $B_p$-condition for our weight $w \in L^1(I)$:
	\begin{equation}
	\label{eq:def_Icrp}
	\Icr{p}(w) := \biggl\{ x_0 \in I : \forall\, \eps>0 \int\limits_{I \cap B(x_0,\eps)} \frac{1}{w^{\,p'/p}}\ dx = \infty \biggr\} \qquad \text{for } p\in (1,\infty)
	\end{equation}
	and
	\begin{equation}
	\label{eq:def_Icr1}
	\Icr{1}(w) := \left\{ x_0 \in I : \forall\, \eps>0 \quad {\mathcal{L}^1\mathrm{-ess \, sup}}\left\{ \frac{1}{w(x)} : x \in I \cap B(x_0,\eps) \right\} = \infty \right\}.
	\end{equation}
	After \cite{louet2014} we shall call $\Icr{p}(w)$ \textit{a critical set} for the weight $w \in L^1(I)$; consequently each point $x_0 \in \Icr{p}(w)$ will be called critical as well.
	It is straightforward to check that the $B_p$-condition \eqref{eq:Bp_condition} is equivalent to enforcing $\Icr{p}(w) = \nolinebreak \varnothing$, also for $p=1$. From the definition it is easy to infer that the set $\Icr{p}(w)$ is always closed (in a relative topology on $I \subset \R$).
	
	The main result of Louet was to observe that the critical set contains exactly those points in $I$ for which the tangent space to $\muw$ is trivial; we quote his statement:
	\begin{proposition}
		\label{prop:Louet}
		Assume for the interval $I = \Iaa \subset \R$ a weight $w \in L^1(I)$ and choose $\muw = w \ \Leb^1 \mres I$. Then a characterization of the tangent space to the measure $\muw$ follows:
		\begin{equation*}
		\Tmuw(x) =
		\left\{
		\begin{array}{cl}
		\R &\quad \mathrm{if}\ \ x \in I \backslash \Icr{p}(w) \\
		\{0\}&\quad \mathrm{if}\ \ x \in \Icr{p}(w)
		\end{array}
		\right. \qquad \text{for } \muw\text{-a.e. } x. 
		\end{equation*}
	\end{proposition}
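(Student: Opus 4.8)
The tangent space $\Tmuw(x)$ is by construction the orthogonal complement of $N^p_{\muw}(x)$, and on the line the only subspaces are $\{0\}$ and $\R$; thus $\Tmuw(x)=\R$ is equivalent to $N^p_{\muw}(x)=\{0\}$ and $\Tmuw(x)=\{0\}$ to $N^p_{\muw}(x)=\R$. The plan is therefore to prove two complementary facts: (A) every field in $\mathcal N^p_{\muw}$ vanishes $\muw$-a.e. on $I\setminus\Icr{p}(w)$, which by the multifunction characterization of $\mathcal N^p_{\muw}$ is equivalent to $N^p_{\muw}(x)=\{0\}$ there; and (B) the constant field $\mathbbm 1_{\Icr{p}(w)}$ belongs to $\mathcal N^p_{\muw}$, which forces $N^p_{\muw}(x)=\R$ on the critical set. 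I argue for $p\in(1,\infty)$; the case $p=1$ is identical upon replacing $L^{p'}_{\muw}$ by $L^\infty_{\muw}$ and the criterion \eqref{eq:def_Icrp} by its essential-supremum form \eqref{eq:def_Icr1}.

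For (A), fix $x_0\notin\Icr{p}(w)$; by \eqref{eq:def_Icrp} there is $\eps>0$ with $U:=I\cap B(x_0,\eps)$ satisfying $\int_U w^{-p'/p}\,dx<\infty$. Let $v\in\mathcal N^p_{\muw}$, so that $(0,v)\in\overline G$ and there are $u_h\in\D(\R)$ with $u_h\to 0$ and $u_h'\to v$ in $\Lpnu$. The H\"older estimate \eqref{eq:B_p_estimate} on $U$ turns $\Lpnu$-convergence into $L^1(U)$-convergence, so $u_h\to 0$ and $u_h'\to v$ in $L^1(U)$. Passing to distributions, $u_h'\to v$ and $u_h'\to 0$ simultaneously in $\D'(U)$, whence $v=0$ a.e. on $U$. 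Since such neighbourhoods cover $I\setminus\Icr{p}(w)$, the field $v$ vanishes $\muw$-a.e. there, which is (A).

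The core of the argument is (B), which I would settle by duality. Put $A:=\Icr{p}(w)$; it suffices to show $(0,\mathbbm 1_A)\in\overline G$, and since the closed subspace $\overline G$ equals the pre-annihilator of its annihilator, this amounts to checking $\int_A\psi\,d\muw=0$ for every pair $(\phi,\psi)\in L^{p'}_{\muw}\times L^{p'}_{\muw}$ with $\int_I(u\,\phi+u'\,\psi)\,w\,dx=0$ for all $u\in\D(\R)$. Testing with $u\in\D(I)$ shows that $h:=\psi\,w$ is locally absolutely continuous on $I$, with $h'=\phi\,w\in\Lloc{I}$, hence $h$ admits a continuous representative. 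The decisive observation is that $\psi\in L^{p'}_{\muw}$ reads precisely $\int_I\abs{h}^{p'}\,w^{-p'/p}\,dx<\infty$. If $h(x_0)\ne 0$ at an interior critical point $x_0\in A$, continuity gives $\abs{h}\ge\delta>0$ on some $B(x_0,\eps)\subset I$, so $\int_{B(x_0,\eps)}\abs{h}^{p'}w^{-p'/p}\,dx\ge\delta^{p'}\int_{B(x_0,\eps)}w^{-p'/p}\,dx=\infty$ by \eqref{eq:def_Icrp}, a contradiction. Hence $h\equiv 0$ on the closed set $A\subset I$, so $\int_A\psi\,d\muw=\int_A h\,dx=0$ as needed, and $(0,\mathbbm 1_A)\in\overline G$ follows.

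I expect the duality step in (B) to be the main obstacle: one must identify the annihilator of $G$ and recognize that, through the substitution $h=\psi\,w$, its elements are exactly the continuous functions whose conjugate-weighted energy $\int\abs{h}^{p'}w^{-p'/p}$ is finite. It is the clash between this finiteness and the divergence defining $\Icr{p}(w)$ that annihilates $h$ on the critical set; once this is in place, (A) and (B) together yield the claimed dichotomy for $\muw$-a.e. $x$.
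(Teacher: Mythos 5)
Your proof is correct, but note that the paper itself does not actually prove this proposition: it sketches only the easy half (that $\Tmuw(x)=\R$ for $\muw$-a.e.\ $x$ off the critical set, via the H\"older estimate \eqref{eq:B_p_estimate} --- precisely your step (A)) and refers to Louet's original article for the half concerning $\Icr{p}(w)$, describing that proof as long and technical. Your step (B) therefore replaces the citation with a short, self-contained duality argument: since $\overline{G}$ is a closed subspace of $\Lpnu\times\Lpnu$, the bipolar theorem reduces $(0,\mathbbm{1}_{\Icr{p}(w)})\in\overline{G}$ to testing against the annihilator of $G$, and the substitution $h=\psi w$ turns annihilating pairs into $W^{1,1}(I)$-functions with finite conjugate energy $\int_I\abs{h}^{p'}w^{-p'/p}\,dx$, which continuity then forces to vanish on $\Icr{p}(w)$. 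This is the same Legendre--Fenchel-type clash the paper exploits later in Section \ref{sec:LFconjugate}, so your route is both shorter than the cited one and more consonant with the rest of the paper. Two details deserve an explicit line in a careful write-up. First, on $\{w=0\}$ one has $h=\psi w=0$ a.e., so $h(x_0)\neq 0$ first forces $\Leb^1\bigl(\{w=0\}\cap B(x_0,\eps)\bigr)=0$ for small $\eps$; only then is the divergence of $\int_{B(x_0,\eps)}w^{-p'/p}\,dx$ carried by $\{w>0\}$, where it genuinely contradicts $\psi\in L^{p'}_{\muw}$ (without this observation the divergence could a priori live entirely on $\{w=0\}$, where $h$ vanishes). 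Second, in step (A) the passage from ``every $v\in\mathcal{N}^p_{\muw}$ vanishes $\muw$-a.e.\ off $\Icr{p}(w)$'' to ``$N^p_{\muw}(x)=\{0\}$ there'' uses the converse implication of the multifunction characterization: otherwise $\mathbbm{1}_E$, for a positive-measure set $E$ on which $N^p_{\muw}=\R$, would be a nonvanishing element of $\mathcal{N}^p_{\muw}$. Neither point is a gap, only a place to be explicit.
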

	The proof of the first claim i.e. that $\Tmuw(x) = \R$ for $\muw$-a.e. $x \in  I \backslash \Icr{p}(w)$ is easy and a similar estimate to \eqref{eq:B_p_estimate} serves as its core. The rest of the proof is long and technical, the reader is encouraged to see the original work \cite{louet2014}.
	
	We will look at the possible "size" of the critical set $\Icr{p}(w)$. We can trivially choose a weight $w$ such that $\Leb^1\bigl( \Icr{p}(w)\bigr)>0$, in particular for $I = (0,1)$ and arbitrary $p\in \prange$ it suffices to set $w = w_B := \nolinebreak \mathbbm{1}_{B}$ with e.g. $B=(1/2,1)$, where $\mathbbm{1}_B$ denotes the characteristic function of the set $B$. Examining the measure $\muw\bigl( \Icr{p}(w)\bigr)$ is of course entirely different matter: for the case above we clearly end up with $\mu_{w_B}\bigl( \Icr{p}(w_B)\bigr) = 0$. Let, on the other hand, $\mathscr{C}$ denote a fat Cantor set in $I = (0,1)$, we choose $w = w_\mathscr{C} = \mathbbm{1}_\mathscr{C}$. Since the Cantor set is nowhere dense, every point happens to be critical for any $p \in \prange$, namely $\Icr{p}(w_\mathscr{C}) = I$. Ultimately we obtain $\mu_{w_\mathscr{C}} \bigl( \Icr{p}(w_\mathscr{C})\bigr) = \Leb^1(\mathscr{C}) > 0$.\
	
	We shall now pass to defining the weighted Sobolev space of higher order $m$. Strong degeneration of the weight dispose us of a global Poincar\'e-like inequality in the space $\Hnukp{1}$, see for instance \cite{bouchitte2003} or \cite{hajlasz2000}. Hence we cannot define the space $\Hnukp{m}$ by focusing only on the highest, $m$-th derivative. Later, in Section \ref{sec:conclusions} we shall see that a version of a generalized Poincar\'{e} is possible to obtain, although we will prove its validity only on some basic examples of weights $w$. At this point a natural way out is to define the higher-order Sobolev inductively, that is the definition of $\Hnukp{m}$ will depend on the space $\Hnukp{m-1}$. We will learn that this is easily done as long as we impose some additional conditions on the weight $w$, yet not as restrictive as the condition of standard Poincar\'e inequality. We observe that due to Proposition \ref{prop:Louet} for any $w \in L^1(I)$ we have
	\begin{equation}
	\label{eq:a.e.non-crit}
	\noncrit \qquad \Rightarrow \qquad \Tmuw(x) = \R \quad \text{for } \muw\text{-a.e. }x
	\end{equation}
	and this resulting property will enable a simple definition of higher-order Sobolev space that essentially draws upon the theory of the first-order space $\Hnukp{1}$.
	
	For any smooth function $u \in \D(\R)$  we introduce the $k$-th derivative tangential to $\muw$ \linebreak with $k \in \mathbb{N}$:
	\begin{equation}
	\label{eq:tgradnuk}
	\tgradnuk{k} u\, (x) := \Pmuw(x) \bigl(D^k u \, (x) \bigr) \quad \text{for } \muw\text{-a.e. }x
	\end{equation}
	where again $\Pmuw(x)$ is an orthogonal projection onto $\Tmuw(x)$, hence for any $u\in \D(\R)$ the tangential derivative $\tgradnuk{k} u$ is a scalar function and an element of \nolinebreak $\Lpnu$. Recall that according to Proposition \ref{prop:closabolity_first_order} the unbounded operator $\tgradnuk{1} := \tgradnuk{}:\D(\R) \subset \Lpnu \rightarrow \Lpnu$ is closable and the domain of the closure is precisely $\Hnukp{1}$.
	
	We observe that for weights $w$ satisfying $\noncrit$ we obtain through \eqref{eq:a.e.non-crit} that $\tgradnuk{k} u = D^k u \ $ $\muw$-a.e for any smooth $u\in \D(\R)$. This will easily provide us with another closability result and ultimately a definition of the higher-order weighted Sobolev space $\Hnukp{m}$ as below (we agree that $\Hnukp{0} = \Lpnu$):
	
	\begin{proposition}
		For an interval $I = \Iaa \subset \R$ let $w \in L^1(I)$ be a weight satisfying the condition $\noncrit$, where $p \in [1,\infty)$. Let $m \geq 1$, then the unbounded operator
		\begin{equation*}
		\tgradnuk{m}:\D(\R) \subset \Hnukp{m-1} \rightarrow \Lpnu
		\end{equation*}
		is closable in $\Hnukp{m-1}$ and by the weighted Sobolev space $\Hnukp{m}$ we mean the domain of this closure and endow it with the graph norm
		\begin{equation}
		\label{eq:norm_Hpnuk}
		\norm{u}_{\Hnukp{m}} := \left( \norm{u}_{\Hnukp{m-1}}^p + \norm{\tgradnuk{m} u}_{\Lpnu}^p \right)^{1/p},
		\end{equation}
		which renders $\Hnukp{m}$ Banach for $p \in [1,\infty)$ and reflexive for $p \in (1,\infty)$.
		\begin{proof}
			Let us take a sequence $u_h \in \D(\R)$ such that $u_h \rightarrow 0$ in $\Hnukp{m-1}$ and $\tgradnuk{m} u \rightarrow v$ in $\Lpnu$. In order to prove closability of $\tgradnuk{m}$ we must show that $v = 0$ in $\Lpnu$. This is established for $m=1$ (see the comment above the proposition), hence we may proceed inductively for $m \geq 2$ assuming that $\tgradnuk{m-1}$ is closed in $\Hnukp{m-1}$. Since $u_h \in \D(\R)$, we obtain for $\muw$-a.e. $x$ (see the definition \eqref{eq:tgradnuk}):
			\begin{equation}
			\label{eq:tgrad_induct}
			\tgradnuk{m} u_h\, (x) = \Pmuw(x) \biggl(D \left(D^{m-1} u_h \right)  (x) \biggr) = \tgradnuk{}\! \left(D^{m-1} u_h  \right) (x).
			\end{equation}
			At this point we use the condition $\muw\bigl( \Icr{p}(w) \bigr) = 0$ which guarantees that $D^{m-1} u_h$ is equal to $\tgradnuk{m-1} u_h\ $ $\muw$-a.e. Further, since $u_h \rightarrow 0$ in $\Hnukp{m-1}$, we have $D^{m-1} u_h = \tgradnuk{m-1} u_h \rightarrow 0$ in $\Lpnu$. Then the closedness of $\tgradnuk{}$ and \eqref{eq:tgrad_induct} give $v = 0$ in $\Lpnu$.
			
			The definition of $\Hnukp{m}$ as the domain of the closure of $\tgradnuk{m}$ is carried out analogically to defining $\Hpmuz$ below Proposition \ref{prop:closabolity_first_order}. The reflexivity for $p\in (1,\infty)$ also follows from precisely same reasons as in the case of $\Hpmuz$ for which, in turn, the argument may be conducted analogically as for the standard Sobolev spaces $W^{m,p}(\Omega)$, see for instance Chapter 3 in \cite{adams2003}.
		\end{proof}
	\end{proposition}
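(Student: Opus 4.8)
The plan is to argue by induction on $m$, handling the three assertions---closability, completeness, and reflexivity---in turn and building each order on the previous one. The base case $m=1$ is supplied by Proposition~\ref{prop:closabolity_first_order}, which already gives closability of $\tgradnuk{}$ on $\D(\R) \subset \Lpnu$ and hence the Banach space $\Hnukp{1}$. For the inductive step the decisive observation is that, for a \emph{smooth} function, the $m$-th tangential derivative factors through the first-order operator applied to the classical $(m-1)$-th derivative: since $\Pmuw$ is linear and acts pointwise, one has $\tgradnuk{m} u = \Pmuw\bigl(D(D^{m-1}u)\bigr) = \tgradnuk{}\!\left(D^{m-1}u\right)$ $\muw$-a.e. for every $u \in \D(\R)$. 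This reduces the behaviour of the high-order operator to that of the already-closed first-order one.

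To prove closability at order $m \geq 2$, I would take a sequence $u_h \in \D(\R)$ with $u_h \rightarrow 0$ in $\Hnukp{m-1}$ and $\tgradnuk{m} u_h \rightarrow v$ in $\Lpnu$, and show $v=0$. Here the hypothesis $\noncrit$ enters crucially: by Proposition~\ref{prop:Louet} it forces $\Tmuw(x) = \R$ for $\muw$-a.e. $x$, so $\Pmuw$ is the identity $\muw$-a.e., whence $D^{k} u_h = \tgradnuk{k} u_h$ in $\Lpnu$ for every $k$ and every smooth $u_h$. Consequently the convergence $u_h \rightarrow 0$ in $\Hnukp{m-1}$, which by the graph norm \eqref{eq:norm_Hpnuk} controls $\tgradnuk{m-1}u_h = D^{m-1}u_h$, yields $D^{m-1}u_h \rightarrow 0$ in $\Lpnu$. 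Setting $f_h := D^{m-1}u_h \in \D(\R)$, the factorization above reads $\tgradnuk{} f_h = \tgradnuk{m}u_h \rightarrow v$; applying closedness of the first-order operator $\tgradnuk{}$ to the pair $(f_h, \tgradnuk{} f_h) \rightarrow (0,v)$ gives $v = \tgradnuk{}0 = 0$, as required.

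Once closability is established, $\Hnukp{m}$---defined as the domain of the closure with the graph norm---is complete for the standard reason that the domain of a closed operator between Banach spaces is itself Banach: a sequence Cauchy in the $\Hnukp{m}$-norm is Cauchy both in $\Hnukp{m-1}$ (Banach by induction) and in $\Lpnu$ through its top tangential derivative, and closedness returns the joint limit to the domain. For reflexivity when $p \in (1,\infty)$ I would realize $\Hnukp{m}$ isometrically as a closed linear subspace of the product $\prod_{k=0}^{m} \Lpnu$ via the embedding $u \mapsto \bigl(u, \tgradnuk{1}u, \ldots, \tgradnuk{m}u\bigr)$; the image is closed precisely because each $\tgradnuk{k}$ is closed. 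Since $\Lpnu$ is reflexive for $p \in (1,\infty)$, the finite product is reflexive and so is the closed subspace, exactly as in the classical argument for $W^{m,p}$.

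The hard part is the inductive closability step, and within it the passage from convergence in the $\Hnukp{m-1}$-norm to convergence of the \emph{classical} derivative $D^{m-1}u_h$ in $\Lpnu$. The graph norm only ever controls tangential derivatives, so without the identification $D^{m-1}u_h = \tgradnuk{m-1}u_h$ the factorization could not be fed into the first-order closedness result. This identification is precisely what the non-criticality assumption $\noncrit$ secures, which is why it is the standing hypothesis; were $\muw$ to charge the critical set, $\Pmuw$ would annihilate directions on a set of positive $\muw$-measure and the reduction to the first-order operator would collapse.
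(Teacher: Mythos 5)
Your proposal is correct and follows essentially the same route as the paper: induction on $m$, the pointwise factorization $\tgradnuk{m}u_h = \tgradnuk{}\!\left(D^{m-1}u_h\right)$ for smooth functions, the use of $\noncrit$ via Proposition~\ref{prop:Louet} to identify $D^{m-1}u_h$ with $\tgradnuk{m-1}u_h$ in $\Lpnu$, and then closability of the first-order operator applied to the sequence $\bigl(D^{m-1}u_h,\tgradnuk{}(D^{m-1}u_h)\bigr)\rightarrow(0,v)$ to force $v=0$. Your completeness and reflexivity arguments merely spell out the standard product-space embedding that the paper delegates to the analogy with $W^{m,p}(\Omega)$; there is no substantive difference.
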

	
	By the definition of $\Hnukp{m}$, a function $u \in \Hnukp{m-1}$ belongs to $\Hnukp{m}$ with $v_m:= \tgradnuk{m} u \in \Lpnu$ if and only if there exists a sequence $u_h \in \D(\R)$ such that $u_h \rightarrow u$ in $\Hnukp{m-1}$ and $\tgradnuk{m} u_h \rightarrow v$ in $\Lpnu$. Again by induction and by acknowledging $\noncrit$ we obtain a characterization:
	\begin{equation*}
	u \in \Hnukp{m} \quad \Leftrightarrow \quad \exists \, u_h\in \D(\R) \text{ such that } 	\left \{
	\begin{array}{ll}
	u_h \rightarrow u \ &\text{in } \Lpnu, \\
	D^k u_h \rightarrow v_k \ & \text{in } \Lpnu
	\end{array} \right.
	\end{equation*}
	where for $k \in \{1,\ldots,m\}$ the functions $v_k$ are any elements from $\Lpnu$; moreover each $v_k$ is uniquely defined and by definition equals $\tgradnuk{k} u$. Based on the same argument as in Remark \ref{rem:strong_weak} strong convergence in $\Lpnu$ above may be replaced by weak convergence instead.
	
	On the other hand equations \eqref{eq:tgrad_induct} together with the condition $\noncrit$ unlocks an apparatus known from classical calculus, i.e. for $m \geq 1$:
	\begin{equation}
	\label{eq:inductive_Sobolev}
	u \in \Hnukp{m} \qquad \Leftrightarrow \qquad u \in \Hnukp{m-1} \quad \text{and} \quad \tgradnuk{m-1} u \in \Hnukp{1},
	\end{equation}
	where by $\tgradnuk{0} u$ we understand the function $u$ itself.  Moreover, for $u \in \Hnukp{m}$
	\begin{equation*}
	\tgradnuk{k} u = \tgradnuk{} \!\left( \tgradnuk{k-1} u \right) \qquad \text{for} \qquad k \in \left\{ 1, \ldots, m \right\} .
	\end{equation*}
	
	We have seen that the established definition of the weighted Sobolev space $\Hnukp{m}$ directly depends on the condition $\noncrit$. \textit{We will keep this assumption throughout the rest of this work}; such weights will be called $\muw$\textit{-a.e. non-critical}. In order to emphasize the range of applicability of our definition we put forward the following result:
	\begin{proposition}
		\label{prop:BV_crit}
		Let a non-negative function $w \in L^1(I)$ have a bounded variation, i.e. $w \in BV(I)$. Then $\noncrit$ for every $p \in [1,\infty)$.
		\begin{proof}
			We shall work with the precise representative $\breve{w}$ of the function $w$, i.e. $\breve{w}(x):= \lim_{r \rightarrow 0}  \fint_{B(x,r)} w(y)\, dy$; since $w\in BV(I)$ the function $\breve{w}$ is approximately continuous and $\Leb^1$-a.e. equal to $w$, see e.g. \cite{evans1992}.
			
			Let us consider a point $x_0 \in I$  and assume that $\breve{w}(x_0)>0$ and that $x_0$ is a continuity point of $\breve{w}$. Then for each $\eps \in \bigl(0,\breve{w}(x_0)\bigr)$ there exists $\delta>0$ such that $\breve{w}(x)>\eps>0$ for every $x\in B(x_0,\delta)$ and thus $x_0 \notin  \Icr{p}(w)$.
			
			We have showed that the set $\Icr{p}(w)$ is contained in the sum of the set $\left\{ x \in I : \breve{w}(x) = 0 \right\}$ and the set of discontinuity points of $\breve{w}$. It is obvious that the measure $\muw$ of the first set is zero. Since $\breve{w}$ is of bounded variation on interval $I$ in the classical sense, the second set is at most countable, and thus of Lebesgue measure zero.
		\end{proof}
	\end{proposition}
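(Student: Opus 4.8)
The plan is to localize the criticality condition and to lean on the fine structure of one-dimensional $BV$ functions. Since the integral in \eqref{eq:def_Icrp} (respectively the essential supremum in \eqref{eq:def_Icr1}) depends on $w$ only and is unchanged if $w$ is modified on an $\Leb^1$-null set, I would first replace $w$ throughout by its precise representative $\breve{w}(x) := \lim_{r \to 0} \fint_{B(x,r)} w(y)\,dy$. For $w \in BV(I)$ this representative is defined everywhere, is approximately continuous, agrees with $w$ $\Leb^1$-a.e., and is genuinely continuous off an at most countable jump set $D$; consequently $\Icr{p}(w)$ may be computed verbatim with $\breve{w}$ in place of $w$.

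The heart of the argument is then a local lower bound. Suppose $x_0 \in I$ is a continuity point of $\breve{w}$ with $\breve{w}(x_0)>0$. Then for any $\eps \in \bigl(0,\breve{w}(x_0)\bigr)$ there is $\delta>0$ such that $\breve{w}(x) > \eps$ for every $x \in B(x_0,\delta)$, and hence $w \geq \eps$ holds $\Leb^1$-a.e. on that ball. For $p \in (1,\infty)$ this gives $\int_{I \cap B(x_0,\eps')} w^{-p'/p}\,dx \leq \eps^{-p'/p}\,\Leb^1\bigl( B(x_0,\eps') \bigr) < \infty$ once $\eps' \leq \delta$, and for $p=1$ it gives $\Leb^1\text{-ess\,sup}\{ 1/w(x) : x \in I \cap B(x_0,\eps') \} \leq 1/\eps < \infty$. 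In either case $x_0 \notin \Icr{p}(w)$, so by contraposition every critical point must lie in the zero set $\{ x \in I : \breve{w}(x)=0 \}$ or in the jump set $D$; that is, $\Icr{p}(w) \subseteq \{\breve{w}=0\} \cup D$.

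It remains to estimate the $\muw$-measure of the two pieces. On $\{\breve{w}=0\}$ we have $w=0$ $\Leb^1$-a.e., so $\muw\bigl(\{\breve{w}=0\}\bigr) = \int_{\{\breve{w}=0\}} w\,dx = 0$. The jump set $D$ of a $BV$ function on an interval is at most countable, hence $\Leb^1(D)=0$, and since $\muw = w\,\Leb^1 \mres I$ is absolutely continuous with respect to $\Leb^1$ we also get $\muw(D)=0$. Combining the inclusion with these two bounds yields $\muw\bigl( \Icr{p}(w) \bigr)=0$, i.e. $\noncrit$, for every $p \in [1,\infty)$.

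The only genuinely nontrivial ingredient I foresee is the classical one-variable $BV$ fact invoked in the first paragraph — that the precise representative is continuous away from an at most countable set and coincides $\Leb^1$-a.e. with $w$ — which I would simply cite. Everything after that reduces to the elementary H\"older/essential-supremum bound together with the absolute continuity of $\muw$, so I do not anticipate any further obstacle.
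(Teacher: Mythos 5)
Your proposal is correct and follows essentially the same route as the paper's own proof: pass to the precise representative $\breve{w}$, show that every continuity point with $\breve{w}(x_0)>0$ is non-critical via the local lower bound $w\geq\eps$ a.e. near $x_0$, deduce the inclusion $\Icr{p}(w)\subseteq\{\breve{w}=0\}\cup D$ with $D$ the at most countable jump set, and conclude since both pieces are $\muw$-null. Your version is merely slightly more explicit in writing out the H\"older/essential-supremum bounds and the absolute continuity $\muw\ll\Leb^1$.
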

	
	The main focus of this work is to examine the continuity properties of elements $u \in \Hnukp{m}$ and, in case of higher $m$, continuity of derivatives $\tgradnuk{k}u$ for $k<m$. We start here with some initial remarks just for the first order space $\Hnukp{1}$.
	
	Let us assume that, for some interval $I$ and any $p\in \prange$, our weight $w$ satisfies the $B_p$-condition, that is $\Icr{p}(w) = \varnothing$. Then we have the embedding $\Lpnu \hookrightarrow \Lloc{I}$ and, by taking element $u \in \Hnukp{1}$ and (guaranteed by definition) a sequence $u_h\in\D(\R)$ that converges to $u$ in the norm of $\Hnukp{1}$, we obtain that $u_h$ and $\tgradnuk{} u_h$ are Cauchy in $\Lloc{I}$, but, since $\Icr{p}(w) = \varnothing$, we have $\tgradnuk{}u_h = D u_h $ $\muw$-a.e. Again by the embedding  $u, \tgradnuk{} u \in \Lloc{I}$ and it is straightforward to check that necessarily $u_h \rightarrow u$ and $D u_h \rightarrow \tgradnuk{}u$ in $\Lloc{I}$. Ultimately we obtain that the distributional derivative $D u$ is regular and induced by the function $\tgradnuk{}u$ which renders $u$ as an element of $W^{1,1}_\mathrm{loc}(I)$ by which we mean the classical Sobolev space, defined indifferently via weak derivatives or completion. Thus by a known result there exists a $\Leb^1$-a.e. equal representative of $u$ that is locally absolutely continuous in $I$. For more details see the proof of Corollary \ref{cor:higer_order_continuity}.
	
	Next we look for possible discontinuities of a function $u \in \Hnukp{1}$ in the case when the critical set $\Icr{p}(w)$ is non-empty. We examine a natural class of weights that degenerate to zero around a point $x_0$ at different rates:
	
	\begin{example}
		\label{ex:wa}
		Let $I = (-1,1)$, $p \in \prange$ and $x_0 \in I$, we consider a class of weights $w_\gamma \in L^\infty(I)$:
		\begin{equation*}
		w_\gamma(x) = \abs{x-x_0}^\gamma
		\end{equation*}
		for $\gamma \in [0,\infty)$. We check for which exponents $\gamma$ and $p$ the point $x_0$ belongs to the critical set $\Icr{p}(w_\gamma)$; for any $\eps>0$ and $p >1$
		\begin{equation*}
		\int\limits_{I \cap B(x_0,\eps)} \frac{1}{w_\gamma^{\,p'/p}}\ dx = \int\limits_{I \cap B(x_0,\eps)} \abs{x-x_0}^{-{\gamma}/{(p-1)}} dx
		\end{equation*}
		which is infinite if and only if $\gamma \geq p-1$, recalling that $p$ must be greater than 1. In case of $p=1$ the definition \eqref{eq:def_Icr1} immediately implies that $x_0$ is critical if and only if $\gamma$ is sharply bigger than zero. In summary
		\begin{equation}
		\label{eq:Icr_wa}
		x_0  \in \Icr{p}(w_\gamma) \qquad \Leftrightarrow \qquad
		\left\{
		\begin{array}{cl}
		\gamma \geq p-1 &\quad \mathrm{if}\ \ p\in (1,\infty), \\
		\gamma >0 &\quad \mathrm{if}\ \ p=1.
		\end{array}
		\right.
		\end{equation}
		Having established the above we now turn to check whether a step function
		\begin{equation}
		\label{eq:ustep}
		\ustep = \mathbbm{1}_{(x_0,1)}
		\end{equation}
		belongs to the space $\Hnukpa{1}$ for chosen $p \in \prange$ and $\gamma \in [0,\infty)$. We note that the distributional derivative $D \ustep$ is not regular as it is equal to the Dirac delta distribution $\delta_{x_0}$, therefore, provided that indeed $\ustep \in \Hnukpa{1}$, the tangential derivative $\tgradnuka{} \ustep \in \Lpnua$ and the distributional derivative $D \ustep$ must diverge. To put it differently, the tangential derivative $\tgradnuka{} \ustep$, should it exists, cannot be a commonly understood weak derivative of $\ustep$.
		
		In order to show that $\ustep \in \Hnukpa{1}$ we must find a sequence $u_h \in \D(\R)$ such that $u_h \rightarrow \ustep$ in $\Lpnua$ and also $\tgradnuka{}\! u_h \rightarrow v$ in $\Lpnua$ for some $v$; we recall that $\tgradnuka{}\! u_h = \nolinebreak D u_h \ $ $\mu_{w_\gamma}$-a.e. since the weight $w_\gamma$ is $\mu_{w_\gamma}$-a.e. non-critical. We propose a sequence $u_h \in \mathrm{Lip}(\R)$ instead, since each $u_h$ can be $\Hnukpa{1}$-cheaply smoothed out due to $w_\gamma \in L^\infty(I)$. To focus attention we further assume that $x_0=0$:
		\begin{equation}
		\label{eq:def_u_h_w_a}
		u_h(x) = \left\{
		\begin{array}{clc}
		0 &\quad \mathrm{if}\ \ &x \leq 0, \\
		h \,x  &\quad \mathrm{if}\ \ &0 < x < 1/h,\\
		1 &\quad \mathrm{if}\ \ &1/h \leq x.\\
		\end{array}
		\right.
		\end{equation}
		which gives the a.e. defined derivative $D u_h \equiv h$ in $(0,1/h)$ and $D u_h \equiv 0$ in $I \backslash (0,1/h)$.
		
		Due to the dominated convergence theorem it is obvious that $u_h \rightarrow \ustep$ in $\Lpnua$; we look at the $\Lpnua$-norm of $D u_h$:
		\begin{equation*}
		\norm{D u_h}^{\,p}_\Lpnua = \int_0^{1/h} \abs{x}^\gamma \ h^p\, dx = \frac{h^{\,(p-1)-\gamma}}{\gamma+1}.
		\end{equation*}
		Hence we see that, for any $p \in \prange$, there holds $D u_h \rightarrow v \equiv 0$ in $\Lpnua$ whenever $\gamma > p - 1$ and then we assess $\ustep \in \Hnukpa{1}$ with $\tgradnuka{}\ustep = 0$. In case of $\gamma < p-1$ our sequence $u_h$ is unbounded in $\Hnukpa{1}$, yet this does not settle whether $u \notin \Hnukpa{1}$. However, according to characterization \eqref{eq:Icr_wa}, for $\gamma < p-1$ the critical set for $w_\gamma$ is empty and there must not be a discontinuous function in $\Hnukpa{1}$.
		
		It is left to judge the case of $\gamma = p-1$ for which the sequence $D u_h$ is bounded in $\Lpnua$ yet not convergent to zero. If $p>1$, by reflexivity we find $v \in \Lpnua$ such that (up to choosing a subsequence) $D u_h\rightharpoonup v$ in $\Lpnua$. Hence, according to Remark \ref{rem:strong_weak}, for $p>1$ and $\gamma = p-1$ indeed $\ustep \in \Hnukpa{1}$ holds with $\tgradnuka{}\ustep =v$. Again $v$ must be zero in $\Lpnua$ and we outline the reason. Now that we know $\ustep \in \Hnukpa{1}$ we can choose a different sequence $u_h \in \D(\R)$ for which the convergence in $\Hnukpa{1}$ to $\ustep$ is strong. For the weight $w_\gamma$ there is no critical points other than $x_0$ and therefore $\Lpnua \hookrightarrow \Lloc{I\backslash \{x_0\}}$. Then $D u_h \rightarrow v$ in $\Lloc{I\backslash \{x_0\}}$ and we may infer that $u_h \rightarrow \ustep$ in $W^{1,1}_\mathrm{loc}(I\backslash\{x_0\})$ (see proof of Corollary \ref{cor:higer_order_continuity}). But $\ustep$ is constant in $(-1,x_0)$ and in $(x_0,1)$ therefore $v$ must be zero a.e. in \nolinebreak $I$. 
		
		The above cannot be repeated for $p=1$ and $\gamma = p-1 = 0$ due to lack of reflexivity, although then $w = w_0 \equiv 1$ and the critical set is empty, hence $\ustep$ cannot be an element of $H^{1,1}_{\mu_{w_0}}$. Regarding the characterization \eqref{eq:Icr_wa} our results for weights of class $w_\gamma$ may be summarized for any $p \in \prange$ and $\gamma \in \prange$:
		\begin{equation}
		\label{eq:disc_Icr_wa}
		\ustep = \mathbbm{1}_{(x_0,1)} \in \Hnukpa{1} \quad \text{with} \quad  \tgradnuka{}\ustep \equiv 0 \qquad \Leftrightarrow \qquad x_0  \in \Icr{p}(w_\gamma).
		\end{equation}
		We end the example with a short remark: the whole argument can be unchangeably repeated for the weight $w_\gamma$ redefined so that $w_\gamma(x)=1$ for $x\in (-1,x_0)$ or even $w_\gamma(x)=\abs{\log(\abs{x-x_0})}$ for $x\in (-1,x_0)$. This way we learn that for the point $x_0$ to be critical, or for the step function at $x_0$ to belong $\Hnukp{1}$, we need the weight $w$ to degenerate "fast enough" only on one side of $x_0$, while on the other the weight may even blow up.
	\end{example}
	
	\begin{example}
		\label{ex:wlog}
		For $I = (-1/2,1/2)$, $p \in \prange$ and $x_0 \in I$ we define a weight $w_\mathrm{log} \in \nolinebreak L^\infty(I)$:
		\begin{equation}
		\label{eq:wlog_def}
		w_\mathrm{log} (x) = \frac{1}{\abs{\log(\abs{x-x_0})}}.
		\end{equation}
		For any $\eps>0$ and $p >1$
		\begin{equation*}
		\int\limits_{I \cap B(x_0,\eps)} \frac{1}{w_\mathrm{log}^{\,p'/p}}\ dx = \int\limits_{I \cap B(x_0,\eps)} \abs{\log(\abs{x-x_0})}^{{1}/{(p-1)}} dx
		\end{equation*}
		which is finite for every $p \in (1,\infty)$ and thus $x_0 \notin \Icr{p}(w_\mathrm{log})$ for those $p$. On the other hand the weight $w_\mathrm{log}$ is not essentially bounded from below by a positive number and thus $x_0$ is a critical point for $p=1$, namely $x_0 \in \Icr{1}(w_\mathrm{log})$.
		
		Independently we will test whether the step function $\ustep = \mathbbm{1}_{(x_0,1/2)}$ is an element of $H^{1,p}_{\mu_{w_\mathrm{log}}}$ for different $p \in \prange$. Assuming that $x_0=0$ we take the sequence defined in \eqref{eq:def_u_h_w_a} and we compute
		\begin{equation*}
		\norm{D u_h}^{\,p}_{L^p_{\mu_{w_\mathrm{log}}}} = \int_0^{1/h} \frac{h^p}{\abs{\log(x)}} \ dx = h^p \ \abs{\,\mathrm{li}(1/h)\,}.
		\end{equation*}
		where $\mathrm{li}$ denotes the logarithmic integral special function. From the properties of $\mathrm{li}$ the right hand side above converges to zero if $p = 1$ and diverges to infinity otherwise. Therefore $\ustep \in H^{1,1}_{\mu_{w_\mathrm{log}}}$ with $D_{\mu_{w_\mathrm{log}}} \ustep \equiv 0$. Since for $p>1$ the critical set $\Icr{p}(w_\mathrm{log})$ is empty, a discontinuous function $\ustep$ cannot be an element of our Sobolev space. We give a conclusion analogical to the one from the previous example, this time for the weight $w_\mathrm{log}$ and every $p \in \prange$:
		\begin{equation}
		\label{eq:disc_Icr_wlog}
		\ustep = \mathbbm{1}_{(x_0,1/2)} \in H^{1,p}_{\mu_{w_\mathrm{log}}} \quad \text{with} \quad  D_{\mu_{w_\mathrm{log}}} \ustep \equiv 0 \qquad \Leftrightarrow \qquad x_0  \in \Icr{p}(w_\mathrm{log}).
		\end{equation}
	\end{example}
	
	\section{On sufficient conditions for continuity of a function and its derivatives in the higher order weighted Sobolev space $\Hnukp{m}$ on the real line}
	\label{sec:sufficient_conditions}
	
	In the previous subsection, for a given interval $I\subset \R$, a weight $w \in L^1(I)$ and the exponent $p\in \prange$, we have inferred continuity of a function $u \in \Hnukp{1}$ under the condition that the critical set $\Icr{p}(w)$ is empty. The argument was based on the resulting embedding $\Lpnu \hookrightarrow \Lloc{I}$, which then furnished $\Hnukp{1} \hookrightarrow W^{1,1}_\mathrm{loc}(I)$. Contrarily, the studied examples have pointed out that for strongly degenerate weights, i.e. if there exists at least one $x_0 \in \nolinebreak \Icr{p}(w)$, a function from $\Hnukp{1}$ may admit a jump-type discontinuity at $x_0$, although up till now that has been firmly established only for weights of classes $w_\gamma$ and $w_\mathrm{log}$, see \eqref{eq:disc_Icr_wa} and \eqref{eq:disc_Icr_wlog}.
	
	Now we pass to investigating continuity of functions from higher order weighted Sobolev space $\Hnukp{m}$; we remind that for this space to be well defined we keep the assumption that the weight $w$ is $\muw$-a.e. non-critical. Take for instance an element $u \in \Hnukp{2}$ and a weight $w$ that admits some critical points. Since the higher order spaces were defined inductively we may expect discontinuities of the first derivative $\tgradnuk{}u$, but is it perhaps possible to impose some extra condition on the weight $w$ such that, despite $\Icr{p}(w) \neq \varnothing$, we can deduce continuity of the function $u$ itself? We start by generalizing and adapting the notion of the critical set:
	\begin{definition}
		For an interval $I = \Iaa \subset \R$, a weight $w \in L^1(I)$ and an exponent $p\in\prange$ we define a critical set of order $\al\in [0,\infty)$ as a subset of the closure $\bar{I}$:
		\begin{equation}
		\label{eq:def_Icrap}
		\cIcr{\al,p}(w) := \biggl\{ x_0 \in \bar{I} : \forall\, \eps>0  \int\limits_{\bar{I} \cap B(x_0,\eps)} \! \left(\frac{\abs{x-x_0}^\al}{\bigl(w(x)\bigr)^{1/p}}\right)^{p'} \! dx = \infty \biggr\} \quad \text{for } p\in (1,\infty)
		\end{equation}
		and
		\begin{equation}
		\label{eq:def_Icra1}
		\cIcr{\al,1}(w) := \biggl\{ x_0 \in \bar{I} : \forall\, \eps>0 \quad {\mathcal{L}^1\mathrm{-ess \, sup}}\left\{ \frac{\abs{x-x_0}^\al}{w(x)} : x \in \bar{I} \cap B(x_0,\eps) \right\} = \infty \biggr\}.
		\end{equation}
	\end{definition}
	\begin{remark}
		Obviously, for any $p \in \prange$ the equality $\Icr{p}(w) = \cIcr{0,p}(w) \cap I$ holds. Note that, apart from introducing an order $\alpha$, we have additionally altered the definition of the critical set by considering the endpoints of the interval $I = \Iaa$, namely each $x_0$ in the closure $\bar{I}$ is now being tested. In \cite{kufner1984} testing the boundary $\partial \Omega$ was not necessary since the key embedding $L^p_w(\Omega) \hookrightarrow \Lloc{\Omega}$ for proving completeness of weighted Sobolev space was indifferent to behaviour of $w$ close to the boundary. Here, if $\cIcr{0,p}(w) = \varnothing$ we can write down an inequality of the form \eqref{eq:B_p_estimate} with integrals over the whole $\bar{I}$ and then we obtain more: $\Lpnu \hookrightarrow L^1(I)$. In this paper looking at criticality of boundary points $\am,\ap$ will prove essential for continuous extensions of the trace operators to the space $\Hnukp{m}$, see Corollary \ref{prop:trace_extension}. Handling the boundary points, however, will cause some minor technical difficulties, see Remark \ref{rem:open_sets}.
	\end{remark}
	We give some basic properties of the newly proposed critical set of order $\al$, starting from monotonicity both with respect to $p$ and $\al$. For a given $w \in L^1(I)$, fixed $\al \in [0,\infty)$ we have
	\begin{equation}
	\label{eq:monot_p}
	\cIcr{\al,p_2}(w) \subset \cIcr{\al,p_1}(w) \qquad \text{for} \quad p_1 \leq p_2,
	\end{equation}
	since the integrand in definitions above is raised to the power $p'$. Secondly, for a fixed $p \in \nolinebreak \prange$ it is straightforward that
	\begin{equation}
	\label{eq:monot_al}
	\cIcr{\al_2,p}(w) \subset \cIcr{\al_1,p}(w)  \qquad \text{for} \quad \al_1 \leq \al_2,
	\end{equation}
	namely for a higher order $\al$ the weight must degenerate faster around a point $x_0$ to furnish its criticality.
	
	Directly from the definitions it follows that, for every $\al\geq0$ and $p\in\prange$ the set $\cIcr{\al,p}(w)$ is closed. For $\al= 0, p >1$ let us take a point $x_0 \in \bar{I} \backslash \cIcr{\al,p}(w)$. Since $x_0$ is not critical we have $\eps>0$ such that the integral over $I \cap B(x_0,\eps)$ in \eqref{eq:def_Icrap} is finite. Then every point $\tilde{x}_0$ from $\bar{I} \cap B(x_0,\eps)$ is not critical as for each such point $\tilde{x}_0$ we may choose $\tilde{\eps}$ so that $B(\tilde{x}_0,\tilde{\eps}) \subset B(x_0,\eps)$. Thus the integral over $B(\tilde{x}_0,\tilde{\eps})$ is also finite. For $p = 1$ the argument is analogical, while for $\al >0$ we shall state a stronger result in Proposition \ref{prop:noncrit_punct_ball}.
	
	For $p>1$ let us assume that a closed, and thus compact, set $F \subset \bar{I}$ does not contain any critical points of any order, namely $F \cap \cIcr{0,p}(w) = \varnothing$. For each point $x \in F$ there exists $\eps_x>0$ such that the integral in \eqref{eq:def_Icrap} over $I \cap B(x,\eps_x)$ is finite. By compactness of $F$ we can choose a finite family of those balls $B(x_n,\eps_{x_n})$ which covers $F$. Ultimately we have established that for $p \in (1,\infty)$
	\begin{equation}
	\label{eq:closed_F_Icrp}
	\int_F \frac{1}{w^{\,p'/p}}\ dx <\infty \qquad \text{for every closed } F \subset \bar{I}\backslash \cIcr{0,p}(w)
	\end{equation}
	and, which can be shown analogically, for $p =1$
	\begin{equation}
	\label{eq:closed_F_Icr1}
	\norm{1/w}_{L^\infty(F)} <\infty \qquad \text{for every closed } F \subset \bar{I}\backslash \cIcr{0,1}(w).
	\end{equation}

	\begin{example}
		\label{ex:wgamma_wexp}
		For an interval $I = (-1,1)$, a point $x_0 \in \bar{I}$ and any $p \in \prange$ we consider two weights: $w_\gamma \in L^\infty(I)$ for some $\gamma \in [0,\infty)$ and $w_\mathrm{exp} \in L^\infty(I)$ as follows
		\begin{equation*}
		w_\gamma(x) = \abs{x-x_0}^\gamma, \qquad w_\mathrm{exp}(x) = \frac{1}{\exp(1/\abs{x-x_0})},
		\end{equation*}
		where for $x_0=0$ the function $w_\mathrm{exp}$ restricted to $(0,1)$ is the inverse of $w_\mathrm{log}\lvert_{(0,1)}$ defined in \eqref{eq:wlog_def}. First, for $p>1$ we look at
		\begin{equation*}
		\int\limits_{I \cap B(x_0,\eps)} \left(\frac{\abs{x-x_0}^\al}{\bigl(w_\gamma(x)\bigr)^{\,1/p}}\right)^{p'} dx = \int\limits_{I \cap B(x_0,\eps)} \abs{x-x_0}^{-(\gamma-\al p)/{(p-1)}} dx
		\end{equation*}
		and, also acknowledging the definition \eqref{eq:def_Icra1} of $\cIcr{\al,1}(w)$, we obtain
		\begin{equation}
		\label{eq:Icra_wa}
		x_0  \in \cIcr{\al,p}(w_\gamma) \qquad \Leftrightarrow \qquad
		\left\{
		\begin{array}{cl}
		\gamma \geq p(\al+1)-1 &\quad \mathrm{if}\ \ p\in (1,\infty), \\
		\gamma >\al &\quad \mathrm{if}\ \ p=1.
		\end{array}
		\right.
		\end{equation}
		Further, again for $p>1$
		\begin{equation*}
		\int\limits_{I \cap B(x_0,\eps)} \left(\frac{\abs{x-x_0}^\al}{\bigl(w_\mathrm{exp}(x)\bigr)^{\,1/p}}\right)^{p'} dx = \int\limits_{I \cap B(x_0,\eps)} \abs{x-x_0}^{\al p'} \bigl(\exp(1/\abs{x-x_0})\bigr)^{1/(p-1)}  dx
		\end{equation*}
		which is infinite for every $\al \geq 0$ and $p\in(1,\infty)$, hence
		\begin{equation}
		\label{eq:Icra_wexp}
		x_0 \in \cIcr{\al,p}(w_\mathrm{exp}) \qquad \text{for every }  \al \in[0,\infty) \text{ and } p\in \prange. 
		\end{equation}
	\end{example}
	\vspace{4mm}
	Let us for $p=2$ and a point $x_0\in \bar{I}$ consider a weight $w_\gamma(x)=\abs{x-x_0}^\gamma$ for any $\gamma\in [1,3)$. According to \eqref{eq:Icra_wa} we have $x_0 \notin \cIcr{1,2}(w_\gamma)$, although $x_0 \in \cIcr{0,2}(w_\gamma)$ which simply shows that the inclusion converse to \eqref{eq:monot_al} cannot hold. Notwithstanding this we are able to show a weaker result of this fashion:
	\begin{proposition}
		\label{prop:noncrit_punct_ball}
		Assume a point in the closure of the interval $x_0 \in \bar{I} = [\am,\ap] \subset \R$, an exponent $p \in \prange$ and a weight $w\in L^1(I)$. If $x_0 \notin \cIcr{\al,p}(w)$ for some $\alpha \geq 0$, then there exists a neighbourhood $\V \ni x_0$ that is relatively open in $\bar{I}$ and satisfies
		\begin{equation}
		\label{eq:noncrit_punct_ball}
		\left(\,\overline{\V}\, \backslash\{x_0\}\,\right) \cap  \cIcr{0,p}(w) = \varnothing.
		\end{equation}
		\begin{remark}
			\label{rem:open_sets}
			We are forced to operate with sets $\V$ that are relatively open in $\bar{I}$ whenever the point $x_0$ is one of the end-points of $\bar{I} = [\am,\ap]$. Then $\V \subset \bar{I}$ furnished by the proposition is an open neighbourhood of $x_0$ in the relative topology in $\bar{I}$, but it is not a neighbourhood of $x_0$ in the topological space $\R$. We agree that in this subsection we will for brevity call  such sets "$\bar{I}$-open" and use symbol $\V$ to denote it, while $\U$ will stand for the sets open in $\R$, which we shall shortly name "open". It is obvious that for any $\bar{I}$-open set $\V$ the set $\V \cap I$ is open, while the closure $\overline{\V}$ in \eqref{eq:noncrit_punct_ball} may be indifferently taken with respect to topologies on $\bar{I}$ or $\R$.
		\end{remark}
		\begin{proof}
			We shall assume $p>1$, since the proof for $p=1$ employs the same simple idea. The fact $x_0 \notin \cIcr{\al,p}$ furnishes $\eps>0$ such that the integral over $\bar{I} \cap B(x_0,\eps)$ in the definition \eqref{eq:def_Icrap} is finite. We set
			\begin{equation*}
			\V := B(x_0,\eps/2) \cap \bar{I}.
			\end{equation*}
			For any $\tilde{x}_0 \in \overline{\V} \backslash \{x_0\}$ we choose $\tilde{\eps} :=\abs{\tilde{x}_0 -x_0}/2$ and we note that $B(\tilde{x}_0,\tilde{\eps}) \subset B(x_0,\eps)$. Then, since for every $x \in B(\tilde{x}_0,\tilde{\eps})$ there holds $\abs{x-x_0}^\al \geq \abs{\tilde{x}_0-x_0}^\al/2^\al =:C>0$, we arrive at
			\begin{alignat*}{1}
			\int\limits_{\bar{I} \cap B(\tilde{x}_0,\tilde{\eps})} \frac{1}{w^{\,p'/p}} \ dx &\leq \frac{1}{C^{p'}} \int\limits_{\bar{I} \cap B(\tilde{x}_0,\tilde{\eps})} \left(\frac{\abs{x-x_0}^\al}{\bigl(w(x)\bigr)^{1/p}}\right)^{p'} dx\\ 
			&\leq \frac{1}{C^{p'}} \int\limits_{\bar{I} \cap B(x_0,\eps)} \left(\frac{\abs{x-x_0}^\al}{\bigl(w(x)\bigr)^{1/p}}\right)^{p'} dx < \infty
			\end{alignat*}
			which implies $\tilde{x}_0 \notin \cIcr{0,p}(w)$ and the proof is complete.
		\end{proof}
	\end{proposition}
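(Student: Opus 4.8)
The plan is to exploit the elementary fact that away from $x_0$ the factor $\abs{x-x_0}^\al$ is bounded below by a strictly positive constant, so that finiteness of the order-$\al$ integral in a ball about $x_0$ forces finiteness of the order-$0$ integral in every small ball about a nearby point $\tilde{x}_0 \neq x_0$. I would treat $p \in (1,\infty)$ in detail; the case $p=1$ is identical after replacing the integral by the essential supremum, exactly as in \eqref{eq:def_Icra1}.

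First I would invoke the hypothesis $x_0 \notin \cIcr{\al,p}(w)$ to extract, straight from the definition \eqref{eq:def_Icrap}, a radius $\eps>0$ for which
\[
\int_{\bar{I} \cap B(x_0,\eps)} \left(\frac{\abs{x-x_0}^\al}{\bigl(w(x)\bigr)^{1/p}}\right)^{p'} dx < \infty .
\]
I would then set $\V := B(x_0,\eps/2) \cap \bar{I}$, which is $\bar{I}$-open and is precisely the neighbourhood claimed, and fix an arbitrary $\tilde{x}_0 \in \overline{\V} \setminus \{x_0\}$ together with the radius $\tilde{\eps} := \abs{\tilde{x}_0 - x_0}/2$. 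A one-line triangle-inequality argument then delivers the two geometric facts I need: the inclusion $B(\tilde{x}_0,\tilde{\eps}) \subset B(x_0,\eps)$ (since every $x$ in the smaller ball satisfies $\abs{x-x_0} < \tfrac{3}{2}\abs{\tilde{x}_0-x_0} \leq \tfrac{3}{4}\eps$), and the lower bound $\abs{x-x_0} \geq \abs{\tilde{x}_0 - x_0}/2$ for every $x \in B(\tilde{x}_0,\tilde{\eps})$, the latter because such $x$ cannot approach $x_0$ by more than $\tilde{\eps}$.

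Setting $C := \bigl(\abs{\tilde{x}_0 - x_0}/2\bigr)^\al > 0$, so that $\abs{x-x_0}^{\al p'} \geq C^{p'}$ on $B(\tilde{x}_0,\tilde{\eps})$, I would close the argument with the comparison
\begin{align*}
\int_{\bar{I} \cap B(\tilde{x}_0,\tilde{\eps})} \frac{1}{w^{\,p'/p}}\, dx
&\leq \frac{1}{C^{p'}} \int_{\bar{I} \cap B(\tilde{x}_0,\tilde{\eps})} \left(\frac{\abs{x-x_0}^\al}{\bigl(w(x)\bigr)^{1/p}}\right)^{p'} dx \\
&\leq \frac{1}{C^{p'}} \int_{\bar{I} \cap B(x_0,\eps)} \left(\frac{\abs{x-x_0}^\al}{\bigl(w(x)\bigr)^{1/p}}\right)^{p'} dx < \infty ,
\end{align*}
where the first inequality uses the pointwise bound $\abs{x-x_0}^{\al p'} \geq C^{p'}$ on the domain of integration, and the second is monotonicity of the integral under $B(\tilde{x}_0,\tilde{\eps}) \subset B(x_0,\eps)$. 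Finiteness of the left-hand side is exactly the statement $\tilde{x}_0 \notin \cIcr{0,p}(w)$, and since $\tilde{x}_0$ ranged over all of $\overline{\V} \setminus \{x_0\}$ this yields \eqref{eq:noncrit_punct_ball}.

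I do not expect a genuine obstacle: the core is a single comparison estimate. The only point demanding care is the endpoint bookkeeping flagged in Remark \ref{rem:open_sets} --- when $x_0 \in \{\am,\ap\}$, the set $\V$ must be read as relatively open in $\bar{I}$ and $\overline{\V}$ as its closure in $\bar{I}$. Because every integral above is already cut off to $\bar{I}$ and the triangle-inequality inclusions survive intersection with $\bar{I}$, this convention forces no change whatsoever in the estimates, so the proof goes through uniformly for interior and boundary points alike.
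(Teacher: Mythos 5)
Your proposal is correct and follows essentially the same route as the paper's own proof: the same choice $\V = B(x_0,\eps/2)\cap\bar{I}$, the same radius $\tilde{\eps}=\abs{\tilde{x}_0-x_0}/2$, the same lower bound $C=\bigl(\abs{\tilde{x}_0-x_0}/2\bigr)^\al$, and the same two-step comparison of integrals. The extra triangle-inequality details and the endpoint remarks you supply are consistent with what the paper leaves implicit.
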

	We put forward the main result of this section, we agree that $\Hnukp{0} = \Lpnu$:
	\begin{theorem}
		\label{thm:embedding}
		Assume an interval $I = \Iaa \subset \R$ with a weight $w \in L^1(I)$ and $p \in \prange$. Choose any point $x_0 \in \bar{I}=[\am,\ap]$ and fix any $k \in \mathbb{N}$ (zero included).
		
		If $x_0 \notin \cIcr{k,p}(w)$ then there exists an $\bar{I}$-open neighbourhood $\V \ni x_0$ such that
		\begin{equation}
		\label{eq:embedding}
		\Hnukp{k} \  \hookrightarrow \ L^1(\V).
		\end{equation}
		In particular, if $\cIcr{k,p}(w) = \varnothing$ then $\V$ may be chosen as $\bar{I}$, namely $\Hnukp{k} \hookrightarrow L^1(\bar{I})$.
	\end{theorem}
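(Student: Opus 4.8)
The plan is to reduce the statement to an a priori estimate on smooth functions and then pass to the completion. Since $\Hnukp{k}$ is by construction the completion of $\D(\R)$ in its graph norm, it suffices to produce an $\bar{I}$-open neighbourhood $\V\ni x_0$ and a constant $C$ with
\begin{equation*}
\int_\V\abs{u}\,dx\le C\,\norm{u}_{\Hnukp{k}}\qquad\text{for every }u\in\D(\R);
\end{equation*}
the inclusion $\D(\R)\hookrightarrow L^1(\V)$ then extends uniquely to a continuous linear map on $\Hnukp{k}$. This very estimate is what makes the extension well defined: if $u_h\to0$ in $\Hnukp{k}$ then $u_h\to0$ in $L^1(\V)$, so the limit cannot depend on the approximating sequence even though $\muw$ is blind to the set $\{w=0\}$; on $\{w>0\}\cap\V$ the $L^1(\V)$-representative agrees $\Leb^1$-a.e. with the $\Lpnu$-element $u$, by passing to an a.e.-convergent subsequence in both topologies. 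For $k=0$ the estimate is exactly the Kufner-type bound: non-criticality $x_0\notin\cIcr{0,p}(w)$ furnishes $\eps>0$ with $\int_{\bar{I}\cap B(x_0,\eps)}w^{-p'/p}\,dx<\infty$, and H\"older as in \eqref{eq:B_p_estimate} over $\V:=\bar{I}\cap B(x_0,\eps)$ gives the claim with no derivatives involved (for $p=1$ replace the last factor by $\norm{1/w}_{L^\infty}$).

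For $k\ge1$ the core is a one-dimensional weighted Hardy inequality. First I would prove: if $\phi$ is smooth on a bounded interval with $x_0$ as one endpoint and $\phi,D\phi,\ldots,D^{k-1}\phi$ vanish at the opposite endpoint $c$, then
\begin{equation*}
\int\abs{\phi}\,dx\le\frac{1}{(k-1)!}\int\abs{x-x_0}^k\,\abs{D^k\phi}\,dx.
\end{equation*}
This follows by writing $\phi$ through the Taylor formula with integral remainder based at $c$, so that the polynomial part drops out; bounding the remainder kernel $\abs{x-t}^{k-1}$ by $\abs{t-x_0}^{k-1}$ (legitimate since $t$ lies between $x$ and the vanishing endpoint $c$, whence $\abs{x-t}\le\abs{x_0-t}$); and finally exchanging the order of integration by Fubini, which upgrades the extra $\abs{t-x_0}^{k-1}$ to $\abs{t-x_0}^k$. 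Splitting $\bar{I}\cap B(x_0,\eps)$ into its one or two components having $x_0$ as a common endpoint, this handles interior points and the boundary points $\am,\ap$ uniformly, provided $\eps$ is small enough that $B(x_0,\eps)$ contains no endpoint of $I$ other than possibly $x_0$.

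With the lemma in hand, I would assemble the estimate as follows. Fix $\eps>0$ from $x_0\notin\cIcr{k,p}(w)$, so that $M:=\int_{\bar{I}\cap B(x_0,\eps)}\bigl(\abs{x-x_0}^k/w^{1/p}\bigr)^{p'}dx<\infty$, and pick a cutoff $\chi\in\D(\R)$ with $\mathrm{supp}\,\chi\subset B(x_0,\eps)$ and $\chi\equiv1$ on $B(x_0,\eps/2)$. Applying the Hardy lemma to $\phi=\chi u$ on each component of $\bar{I}\cap B(x_0,\eps)$, on which $\chi u$ vanishes with its derivatives at the far end, and then H\"older against the splitting $\abs{x-x_0}^k\abs{D^k(\chi u)}=\bigl(\abs{x-x_0}^k/w^{1/p}\bigr)\bigl(w^{1/p}\abs{D^k(\chi u)}\bigr)$, yields
\begin{equation*}
\int_{\bar{I}\cap B(x_0,\eps/2)}\abs{u}\,dx\le\int_{\bar{I}\cap B(x_0,\eps)}\abs{\chi u}\,dx\le\frac{M^{1/p'}}{(k-1)!}\,\norm{D^k(\chi u)}_{\Lpnu}.
\end{equation*}
Expanding $D^k(\chi u)$ by the Leibniz rule and using $\abs{D^j\chi}\le\norm{D^j\chi}_{L^\infty}$ together with $w\in L^1(I)$ bounds $\norm{D^k(\chi u)}_{\Lpnu}$ by a constant times $\sum_{j=0}^k\norm{D^{k-j}u}_{\Lpnu}$; finally the standing assumption $\noncrit$ gives $D^{k-j}u=\tgradnuk{k-j}u$ $\muw$-a.e. for smooth $u$, so this sum is controlled by $\norm{u}_{\Hnukp{k}}$. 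Setting $\V:=\bar{I}\cap B(x_0,\eps/2)$ completes the a priori estimate (the $p=1$ case is identical with $M$ replaced by the essential supremum from \eqref{eq:def_Icra1}). For the final assertion, when $\cIcr{k,p}(w)=\varnothing$ every point of the compact set $\bar{I}$ admits such a neighbourhood; extracting a finite subcover $\V_1,\ldots,\V_n$ and summing $\int_{\bar{I}}\abs{u}\le\sum_i\int_{\V_i}\abs{u}$ yields $\Hnukp{k}\hookrightarrow L^1(\bar{I})$.

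The main obstacle I anticipate is isolating the correct weighted Hardy inequality: the power $\abs{x-x_0}^k$ must emerge precisely so as to match the integrand defining $\cIcr{k,p}(w)$, and the Fubini step must upgrade $\abs{x-x_0}^{k-1}$ to $\abs{x-x_0}^k$ rather than leaving a spurious length factor. Beyond this, the genuine care lies in the bookkeeping of the cutoff at the boundary points $\am,\ap$, where one works with $\bar{I}$-open sets and one-sided Taylor expansions, and in checking that the Leibniz remainder terms, though supported where $w$ may still degenerate, are harmlessly absorbed by the full $\Hnukp{k}$-norm.
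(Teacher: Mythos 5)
Your proof is correct and follows the same overall architecture as the paper's: an a priori $L^1$ bound for smooth functions via a cutoff, an inequality converting $\int\abs{\phi}\,dx$ into $\int\abs{x-x_0}^k\abs{D^k\phi}\,dx$, H\"older against the integral defining $\cIcr{k,p}(w)$, the Leibniz rule plus the standing assumption $\noncrit$ to absorb everything into the graph norm, extension by density, and a finite subcover for the global claim. Two points of execution differ, both in your favour as to economy. First, the key inequality (the paper's Lemma \ref{lem:inequality}) is obtained there by introducing the auxiliary primitive $\tilde{\phi}(x)=\int_{-\infty}^{x}\abs{D^k\phi(y)}\frac{(x-y)^{k-1}}{(k-1)!}\,dy$ and integrating by parts against the monomial $(x-x_0)^k$, which gives the constant $1/k!$; your Taylor-with-remainder-plus-Fubini derivation yields the same inequality with the equally serviceable constant $1/(k-1)!$. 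Second, and more substantively, you take the final neighbourhood $\V$ to be the smaller set on which the cutoff is identically one, so that $\int_\V\abs{u}\le\int\abs{\chi u}$ is free and no correction term arises; the paper instead keeps the larger $\V$ and must separately estimate $\int_{\V\backslash\U_1}\abs{u-\tilde{u}}$, which forces it to invoke Proposition \ref{prop:noncrit_punct_ball} and \eqref{eq:closed_F_Icrp} to secure $\int_{\overline{\V}\backslash\U_1}w^{-p'/p}\,dx<\infty$. Your route dispenses with that input in the estimate itself, and your identification of the $L^1(\V)$-limit with $u$ only $\Leb^1$-a.e.\ on $\{w>0\}\cap\V$ (via a.e.-convergent subsequences in both topologies) is all that the statement requires, whereas the paper again leans on the punctured-ball non-criticality for this step. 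The one delicate point --- that the ``far endpoint'' of a component of $\bar{I}\cap B(x_0,\eps)$ must be a point where $\chi u$ vanishes to order $k-1$, which fails if $B(x_0,\eps)$ swallows an endpoint of $I$ other than $x_0$ --- you correctly neutralize by shrinking $\eps$.
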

	
	For $k=0$ the theorem roughly boils down to the well established fact \eqref{eq:standard_embedding}. Prior to proving the statement we shall first see what are the consequences as far as continuity of functions from $\Hnukp{m}$ are concerned. Henceforward we agree that $\bk \in \{0,\ldots m-1\}$ will stand for the order of derivative $\tgradnuk{\bk}u$ whose continuity is examined (affirmed in this section and denied in the next) at a point $x_0$. By means of induction we show that Theorem \nolinebreak \ref{thm:embedding} implies
	\begin{corollary}
		\label{cor:higer_order_continuity}
		Let us for $p\in\prange$, an interval $I \subset  \R$ and a weight $w \in L^1(I)$ denote by $\Hnukp{m}$ the $m$-th order weighted Sobolev space, where $m \in \mathbb{N}_+$. We choose a point $x_0 \in \bar{I}$, an order $\bk \in \{0,\ldots,m-1\}$ and denote $\Dk := m - \bk$.
		
		If $x_0 \notin \cIcr{\Dk-1,p}(w)$ then there exists an $\bar{I}$-open neighbourhood $\V \ni x_0$ such that
		\begin{equation}
		\label{eq:embedding_higher}
		\Hnukp{m} \  \hookrightarrow \ W^{(\bk+1),1}(\V \cap I)
		\end{equation}
		and, as a result, for a function $u \in \Hnukp{m}$ there exists a function $\breve{u}\in C^\bk(\V \cap I)$  such that for all $k \leq \bk$
		\begin{equation*}
		u = \breve{u} \quad \text{and} \quad \tgradnuk{k} u = D^k \breve{u}  \qquad \Leb^1\text{-a.e. on }\, \V,
		\end{equation*}
		where $D^k \breve{u}$ is intended in the classical sense.
		
		In the case when $\cIcr{\Dk-1,p}(w) = \varnothing$ the set $\V \cap I$ above may be replaced by $I$.
		\begin{proof}
			We choose an $\bar{I}$-open neighbourhood $\V \ni x_0$ in accordance with Theorem \ref{thm:embedding}. Let us take any function $u \in \Hnukp{m}$. Recall \eqref{eq:inductive_Sobolev}; then, since $x_0 \notin \cIcr{\Dk-1,p}(w)$, by Theorem \ref{thm:embedding} we obtain
			\begin{equation*}
			\tgradnuk{\bk+1} u \in \Hnukp{\left(m-(\bk+1)\right)} = \Hnukp{\Dk -1} \hookrightarrow L^1(\V)
			\end{equation*}
			and the same applies to derivatives $\tgradnuk{k} u$ for any $k \in \{0,\ldots,\bk+1\}$. We choose a sequence of smooth functions $u_h \in \D(\R)$ that converges to $u$ in $\Hnukp{m}$, then for every order $k \in \{0,\ldots,\bk+1\}$
			\begin{equation*}
			\norm{D^k u_h - \tgradnuk{k} u }_{L^1(\V)} \leq C \norm{u_h -u}_\Hnukp{m},
			\end{equation*}
			where $C>0$ is given by the embedding \eqref{eq:embedding}. The convergence $D^k u_h \rightarrow \tgradnuk{k} u$ in $L^1(\V)$ for every $k \in \{0,\ldots,\bk+1\}$ follows.
			
			Since $\V$ is merely $\bar{I}$-open further we work with $\U :=  \V \cap I$ that is also open in $\R$; note that $\V \subset \U \cup \{a_-,a_+\}$. Clearly all the convergences in $L^1(\V)$ above also hold in $L^1(\U)$.  Now for every $k\in \{0,\ldots, \bk+1\}$ we may compute the distributional derivative $D^k u$ on $\U$. For a function $\varphi \in \D(\U)$
			\begin{alignat*}{1}
			\pairing{\varphi, D^k u} = (-1)^k\int_\U \left(D^k\varphi\right)  u\, dx &= \lim_{h \rightarrow \infty} (-1)^k\int_\U \left(D^k\varphi\right)  u_h\, dx \\
			& = \lim_{h \rightarrow \infty} \int_\U \varphi  \left(D^k u_h\right) dx \\
			& = \int_\U \varphi  \left(\tgradnuk{k} u_h\right) dx,
			\end{alignat*}
			thus we infer that for each  $k \in \{0,\ldots,\bk+1\}$ we have $D^k u = \tgradnuk{k} u \in L^1(\U) \cap \Lpnu$ ($\tgradnuk{k}u$ induces a regular distribution $D^k u$ on $\U$) and hence $u \in W^{(\bk+1),1}(\U)$ with $u_h \rightarrow u$ in $W^{(\bk+1),1}(\U)$. Moreover
			\begin{equation*}
			\norm{u}_{W^{(\bk+1),1}(\U)} \leq (\bk+1)\, C \,\norm{u}_\Hnukp{m}
			\end{equation*}
			which ultimately establishes the embedding \eqref{eq:embedding_higher}. The rest of the corollary is a result of a well-known fact that a function $u \in W^{(\bk+1),1}(\U)$ has an almost everywhere equal representative $\breve{u} \in C^k(\U)$ with $D^k \breve{u}$ being absolutely continuous.
		\end{proof}
	\end{corollary}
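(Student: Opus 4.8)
The plan is to reduce the statement to the base-case embedding of Theorem~\ref{thm:embedding} applied at order $\Dk-1$, and then to promote the $\muw$-defined tangential derivatives to genuine distributional derivatives on the open set $\U:=\V\cap I$. First I would settle the index bookkeeping: the hypothesis $x_0\notin\cIcr{\Dk-1,p}(w)$ with $\Dk-1=m-\bk-1\geq 0$ (valid since $\bk\leq m-1$) lets me invoke Theorem~\ref{thm:embedding} with $k=\Dk-1$ to obtain an $\bar{I}$-open neighbourhood $\V\ni x_0$ with $\Hnukp{\Dk-1}\hookrightarrow L^1(\V)$. By the inductive description \eqref{eq:inductive_Sobolev} of the higher-order space, for $u\in\Hnukp{m}$ and every $k\in\{0,\dots,\bk+1\}$ one has $\tgradnuk{k}u\in\Hnukp{m-k}$; since $m-k\geq\Dk-1$ the continuous inclusions $\Hnukp{m-k}\hookrightarrow\Hnukp{\Dk-1}\hookrightarrow L^1(\V)$ place each tangential derivative $\tgradnuk{k}u$ in $L^1(\V)$.

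The central step is to identify, on the genuinely open set $\U=\V\cap I$ (recall Remark~\ref{rem:open_sets}; here $\V\setminus\U\subset\{\am,\ap\}$ is $\Leb^1$-null), the tangential derivative $\tgradnuk{k}u$ with the classical distributional derivative $D^k u$ for each $k\leq\bk+1$. I would pick a sequence $u_h\in\D(\R)$ with $u_h\to u$ in $\Hnukp{m}$; then $\tgradnuk{k}u_h\to\tgradnuk{k}u$ in $\Hnukp{m-k}$ because $u\mapsto\tgradnuk{k}u$ is bounded from $\Hnukp{m}$ into $\Hnukp{m-k}$ by the graph-norm structure \eqref{eq:norm_Hpnuk}. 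Using the standing non-criticality $\noncrit$ to write $\tgradnuk{k}u_h=D^k u_h$ $\muw$-a.e.\ for the smooth approximants, the embedding furnishes
\begin{equation*}
\norm{D^k u_h-\tgradnuk{k}u}_{L^1(\V)}\leq C\,\norm{u_h-u}_{\Hnukp{m}},
\end{equation*}
whence $D^k u_h\to\tgradnuk{k}u$ in $L^1(\U)$ for every $k\in\{0,\dots,\bk+1\}$. Testing the distributional derivative against $\varphi\in\D(\U)$ and integrating by parts (legitimate since $\varphi$ is compactly supported in the open set $\U$) gives $\pairing{\varphi,D^k u}=\int_\U\varphi\,\tgradnuk{k}u\,dx$, so each $D^k u$ is the regular distribution induced by $\tgradnuk{k}u\in L^1(\U)$. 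Summing the $L^1$-bounds yields $u\in W^{(\bk+1),1}(\U)$ together with the embedding \eqref{eq:embedding_higher} and the strong convergence $u_h\to u$ in $W^{(\bk+1),1}(\U)$. The conclusion then follows from the classical one-dimensional fact that a $W^{(\bk+1),1}(\U)$ function has a representative $\breve u\in C^\bk(\U)$ with $D^\bk\breve u$ locally absolutely continuous and $D^k\breve u=\tgradnuk{k}u$ $\Leb^1$-a.e.\ for $k\leq\bk$ (the case $k=0$ giving $\breve u=u$); since $\V\setminus\U$ is $\Leb^1$-null these equalities hold $\Leb^1$-a.e.\ on $\V$. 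When $\cIcr{\Dk-1,p}(w)=\varnothing$, Theorem~\ref{thm:embedding} delivers $\V=\bar{I}$, so $\U=I$ and the last sentence is immediate.

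The step I expect to be the main obstacle is the passage from the $\muw$-a.e.\ identity $\tgradnuk{k}u_h=D^k u_h$ (available only for smooth functions, and only $\muw$-a.e.) to a genuine $L^1(\U)$ identity of distributions: the displayed inequality is meaningful only because Theorem~\ref{thm:embedding} embeds into the \emph{Lebesgue} space $L^1(\V)$, which tacitly forces $w>0$ $\Leb^1$-a.e.\ on $\V$ (a consequence of the \eqref{eq:closed_F_Icrp}-type finiteness on closed subsets of $\overline{\V}\setminus\{x_0\}$ supplied by Proposition~\ref{prop:noncrit_punct_ball}), so that $\muw$-equivalence and $\Leb^1$-equivalence of the smooth approximants coincide on $\V$ and the classical functions $D^k u_h$ may legitimately be read as representatives of the $\muw$-classes $\tgradnuk{k}u_h$. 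A secondary technical point is the $\bar{I}$-open versus open distinction: the convergences are obtained on $\V$, but the integration by parts must be carried out on the open set $\U=\V\cap I$, which is harmless precisely because the two sets differ by at most the endpoints $\am,\ap$.
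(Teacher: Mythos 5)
Your proposal is correct and follows essentially the same route as the paper's own proof: invoke Theorem \ref{thm:embedding} at order $\Dk-1$, use the inductive structure \eqref{eq:inductive_Sobolev} to place all tangential derivatives $\tgradnuk{k}u$ for $k\leq\bk+1$ in $L^1(\V)$, pass the classical derivatives of the smooth approximants to the limit in $L^1(\U)$, identify $D^k u=\tgradnuk{k}u$ distributionally by integration by parts on $\U=\V\cap I$, and conclude via the classical $W^{(\bk+1),1}(\U)$ regularity fact. Your explicit observation that $w>0$ holds $\Leb^1$-a.e.\ on $\V$ (so that the $\muw$-classes $\tgradnuk{k}u_h$ of the smooth approximants may legitimately be read as the Lebesgue classes of $D^k u_h$) makes precise a point the paper's proof leaves tacit, but the argument is otherwise identical.
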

	Let us return to proving Theorem \ref{thm:embedding}. We start with a simple lemma that explains how the factor $(x-x_0)^k$, appearing in the definition of $\cIcr{k,p}(w)$, comes into play in the inequality that yields the embedding \eqref{eq:embedding}:
	\begin{lemma}
		\label{lem:inequality}
		Let $x_0$ be any point on the real line and choose its open neighbourhood $\U$. Then, for $k \in \mathbb{N}$ and any $\phi \in C^k_c(\U)$ ($\phi$ has a compact support in $\U$) an inequality holds:
		\begin{equation}
		\label{eq:lemma_inequality}
		\int\limits_{\U_-} \abs{\phi(x)} \, dx \leq \frac{1}{k!} \int\limits_{\U_-} \vert \der{k}{\phi}\,(x) \rvert \abs{x-x_0}^k dx.
		\end{equation}
		where $\U_- :=\U \cap (-\infty,x_0]$. The same independently applies to the integrals taken over the set $\, \U_+:=  \nolinebreak \U \cap \nolinebreak {[x_0,\infty)}$.
		\begin{proof}
			For an arbitrary  $\phi \in C^k_c(\U)$ we define
			\begin{equation*}
			\tilde{\phi}(x) := \int_{-\infty}^{x} \abs{\der{k}{\phi}(y)} \frac{(x-y)^{k-1}}{(k-1)!} dy,
			\end{equation*}
			which gives a non-negative function $\tilde{\phi} \in C^{k-1}(\R)$ with absolutely continuous derivative $\der{(k-1)}{\tilde{\phi}}$ such that a.e. $\der{k}{\tilde{\phi}} = \abs{ \der{k}{\phi}}$.  Since $\mathrm{spt}(\tilde{\phi}) \subset [a,\infty)$ for some $a > -\infty$ and all, except the $k$-th, derivatives of monomial $P_k = P_k(x)=(x-x_0)^k$ vanish at $x_0$, we obtain through integration by parts (being valid due to $\der{(k-1)}{\tilde{\phi}} \in \mathrm{AC}(\R)$)
			\begin{alignat}{1}
			\nonumber
			\int_{-\infty}^{x_0} \abs{\phi(x)} \, dx \leq \int_{-\infty}^{x_0} \tilde{\phi}(x) \, dx &= \frac{(-1)^k}{k!} \int_{-\infty}^{x_0} \der{k}{\tilde{\phi}}\,(x)\ (x-x_0)^k \, dx \\
			\label{eq:working_lemma_ineq}
			&= \frac{1}{k!} \int_{-\infty}^{x_0} \vert \der{k}{\phi}\,(x) \rvert \abs{x-x_0}^k dx,
			\end{alignat}
			where in addition we have used the fact that $\lvert{\phi}\rvert \leq \lvert\tilde{\phi}\rvert =\tilde{\phi} $ and $(-1)^k (x-x_0)^k = \abs{x-x_0}^k$ for $x\leq x_0$. Since  $\mathrm{spt}(\phi) \subset \U$, the first and last integral above may be equivalently taken over the set $\U_- = \U \cap (-\infty,x_0]$ furnishing inequality \eqref{eq:lemma_inequality}. The same idea may be applied to the integral over $\U_+ =\U \cap [x_0,\infty)$ and the proof is complete.
		\end{proof}
	\end{lemma}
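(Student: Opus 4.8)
The plan is to prove the inequality over $\U_-$ only; the statement for $\U_+$ then follows by the reflection $x \mapsto 2x_0 - x$, which carries $\U_+$ onto a set of the same type and leaves both integrands invariant. I may also assume $k \geq 1$, since for $k=0$ the claim reduces to the trivial identity $\int_{\U_-}\abs{\phi}\,dx = \int_{\U_-}\abs{\phi}\,dx$. The core idea is that $\phi$, having compact support in $\U$, vanishes together with all its derivatives of order below $k$ to the left of its support; hence Taylor's formula with integral remainder based at $-\infty$ gives the representation
\[
\phi(x) = \int_{-\infty}^{x} \frac{(x-y)^{k-1}}{(k-1)!}\, D^k\phi(y)\,dy \qquad \text{for every } x.
\]
Taking absolute values and using $(x-y)^{k-1}\geq 0$ on the range $y\leq x$ yields the pointwise bound $\abs{\phi(x)} \leq \tilde{\phi}(x)$, where $\tilde{\phi}(x) := \int_{-\infty}^{x} \frac{(x-y)^{k-1}}{(k-1)!}\,\bigl\lvert D^k\phi(y)\bigr\rvert\,dy$.

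The function $\tilde{\phi}$ is non-negative, of class $C^{k-1}(\R)$ with absolutely continuous $(k-1)$-st derivative, satisfies $D^k\tilde{\phi} = \abs{D^k\phi}$ almost everywhere, and is supported in $[a,\infty)$ for the same $a$ that bounds $\mathrm{spt}(\phi)$ from the left. Next I would integrate the bound $\abs{\phi}\leq\tilde{\phi}$ over $(-\infty,x_0]$ and evaluate $\int_{-\infty}^{x_0}\tilde{\phi}\,dx$ by integrating by parts $k$ times against the monomial $Q(x) := (x-x_0)^k/k!$, which satisfies $D^kQ = 1$. Each integration by parts is legitimate because $D^{k-1}\tilde{\phi}$ is absolutely continuous. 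All boundary contributions vanish: at $x_0$ because every derivative $D^jQ$ with $j<k$ vanishes there, and at $-\infty$ because $\tilde{\phi}$ and its derivatives are zero on $(-\infty,a)$. This leaves
\[
\int_{-\infty}^{x_0}\tilde{\phi}(x)\,dx = \frac{(-1)^k}{k!}\int_{-\infty}^{x_0} \bigl\lvert D^k\phi(x)\bigr\rvert\,(x-x_0)^k\,dx.
\]
Since $(-1)^k(x-x_0)^k = \abs{x-x_0}^k$ for $x\leq x_0$, the right-hand side equals $\tfrac{1}{k!}\int_{-\infty}^{x_0}\abs{D^k\phi(x)}\,\abs{x-x_0}^k\,dx$, and combining with $\abs{\phi}\leq\tilde{\phi}$ delivers \eqref{eq:lemma_inequality}. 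Finally, because $\mathrm{spt}(\phi)\subset\U$, all integrals over $(-\infty,x_0]$ may be replaced by integrals over $\U_-$.

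The routine points — the pointwise domination and the exact value of the constant $1/k!$ — are immediate. The one place requiring care, and the step I expect to be the main obstacle, is the $k$-fold integration by parts: I must work with $\tilde{\phi}$ only in the class $C^{k-1}$ with $D^{k-1}\tilde{\phi}\in\mathrm{AC}(\R)$ rather than assuming $\tilde{\phi}\in C^k$, and I must verify that every boundary term produced in the iteration genuinely cancels. The cancellation hinges precisely on the two facts noted above, namely that $Q$ and all its derivatives of order below $k$ vanish at $x_0$ and that $\tilde{\phi}$ is supported away from $-\infty$; once these are secured the argument closes cleanly.
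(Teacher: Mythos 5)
Your proposal is correct and follows essentially the same route as the paper: the same majorant $\tilde{\phi}$ built from $\lvert D^k\phi\rvert$, the pointwise bound $\abs{\phi}\leq\tilde{\phi}$ via the Taylor representation, and the $k$-fold integration by parts against $(x-x_0)^k$ with the boundary terms killed by the vanishing of the lower derivatives of the monomial at $x_0$ and by the compact support on the left. The only cosmetic differences are that you make the Taylor step and the $k=0$ and reflection cases explicit.
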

	
	\begin{proof}[Proof of Theorem \ref{thm:embedding}]
		To focus attention we will assume that $p>1$ and we shall comment on the case $p=1$ later.
		
		Since $x_0 \notin \cIcr{k,p}(w)$ for some $k \in \mathbb{N}$, we may choose an $\bar{I}$-open neighbourhood $\V \subset \bar{I}$ of $x_0$ in compliance with Proposition \ref{prop:noncrit_punct_ball}. From the proof of this proposition there holds
		\begin{equation}
		\label{eq:C1}
		\left(\int_{\V} \left(\frac{\abs{x-x_0}^k}{(w(x))^{1/p}}\right)^{p'} \! dx\right)^\frac{1}{p'} =: C_1 < \infty.
		\end{equation}
		We remind that the set $\V \ni x_0$ is in general $\bar{I}$-open; we now modify the set $\V$ to arrive at a neighbourhood $\U \ni x_0$ that is also open in $\R$ (recall that $I =\Iaa$ and $x_0 \in \bar{I}\,$):
		\begin{equation}
		\label{eq:V_def}
		\U := \left\{
		\begin{array}{cll}
		\V \cap I &\quad \mathrm{if}\ \ &x_0 \in I \, , \\
		(-\infty,x_0] \cup (\V \cap I) &\quad \mathrm{if}\ \ & x_0 = a_- \, ,\\
		(\V \cap I) \cup [x_0,\infty) &\quad \mathrm{if}\ \ & x_0 = a_+ \, .
		\end{array}
		\right.
		\end{equation}
		Next we take any open neighbourhood $\U_1\ni x_0$ that is compactly contained in $\U$, i.e $x_0 \in \U_1 \Subset \U$. Then by \eqref{eq:noncrit_punct_ball} the set $\overline{\V}\backslash \U_1$ is a closed subset of $\bar{I}$ with no critical points of any order, hence due to \eqref{eq:closed_F_Icrp}
		\begin{equation}
		\label{eq:C2}
		\left(\int_{\,\overline{\V}\backslash \U_1} \frac{1}{w^{\,p'/p}}\  dx\right)^\frac{1}{p'} =: C_2 < \infty.
		\end{equation}
		We also propose and fix a cut-off function $\varphi \in \D(\U)$ such that
		\begin{equation*}
		0\leq \varphi \leq 1 \qquad \text{and} \qquad \varphi \equiv 1 \quad \text{on} \quad \U_1.
		\end{equation*}
		
		We consider any function $u \in \D(\R)$, we stress that $u$ may not be compactly supported in \nolinebreak $\U$. To prove the embedding \eqref{eq:embedding} we must show that there exists a constant $C$ independent of $u$ such that
		\begin{equation}
		\label{eq:ineq_for_embedding}
		\qquad	\int_\V \abs{u} \, dx \ \leq \ C \ \norm{u}_\Hnukp{k} \qquad \forall \, u\in \D(\R). 
		\end{equation}
		For a picked $u$ we multiply by the smooth cut-off function:
		\begin{equation*}
		\tilde{u} := \varphi \, u.
		\end{equation*}
		According to Lemma \ref{lem:inequality} we arrive at inequality
		\begin{equation}
		\label{eq:inequality_for_Cc_part}
		\int_{\V} \abs{\tilde{u}(x)} \, dx \leq \frac{1}{k!} \int_{\V} \vert \der{k}{\tilde{u}}\,(x) \rvert \abs{x-x_0}^k dx.
		\end{equation}
		Indeed, we have $x_0 \in \U$, $\tilde{u} \in \D(\U)$ and, by the definition of $\U$, the set $\V$ is (up to elements $a_-,a_+$) equal to:  $\ \U_- \cup \U_+$ for $x_0 \in I$; $\ $ to $\U_+$ for $x_0 = a_-$; $\ $ to $\U_-$ for $x_0 = a_+$, where $\U_-$ and $\U_+$ are defined in the lemma. Therefore the inequality above can be composed from independent inequalities for $\U_-$ and $\U_+$ from Lemma \ref{lem:inequality}.
		
		We start proving the inequality \eqref{eq:ineq_for_embedding}, we observe that $u-\tilde{u} \equiv 0$ on $\U_1$:
		\begin{alignat}{1}
		\nonumber
		\int_\V \abs{u} \, dx &\leq \int_{\V} \abs{u -\tilde{u}} \, dx + \int_\V \abs{\tilde{u}} \, dx\\
		&\leq  \int_{\V\backslash \U_1} \abs{u -\tilde{u}} \, dx + \frac{1}{k!} \int_{\V} \vert \der{k}{\tilde{u}}\,(x) \rvert \abs{x-x_0}^k dx.
		\end{alignat}
		Both the integrals may be estimated by H\"{o}lder inequality in analogy to \eqref{eq:B_p_estimate}:
		\begin{alignat}{1}
		\nonumber
		\int_\V \abs{u} \, dx  \leq &
		\left(\int_{\V\backslash \U_1} w \, \abs{u -\tilde{u}}^p \, dx\right)^{\frac{1}{p}} \left(\int_{\V\backslash \U_1} \frac{1}{w^{\, p'/p}} dx\right)^{\frac{1}{p'}}\\ 
		\nonumber
		&+ \frac{1}{k!} \left(\int_{\V} w\, \vert \der{k}{\tilde{u}} \rvert^p dx\right)^{\frac{1}{p}} \left(\int_{\V} \biggl(\frac{\abs{x-x_0}^k}{(w(x))^{1/p}}\biggr)^{p'} dx\right)^\frac{1}{p'}\\
		\leq &  \ C_2\ \norm{u-\tilde{u}}_\Lpnu + C_1/k! \ \norm{\der{k}{\tilde{u}}}_\Lpnu,
		\end{alignat}
		where we have utilized \eqref{eq:C1} and \eqref{eq:C2}. Since $\varphi$ is fixed an estimate follows from the Leibniz differentiation formula:
		\begin{equation*}
		\norm{\der{k}{\tilde{u}}}_\Lpnu \leq \left(\max_{0\leq n \leq k} \norm{\der{n}{\varphi}}_\infty\right) \biggl( \int_{\V} w\,  \bigg\lvert \sum_{0\leq n \leq k} \!{k \choose n} \der{n}{u}\, \bigg\rvert^p dx \biggr)^{1/p} \leq C_3 \norm{u}_\Hnukp{k}
		\end{equation*}
		for a finite $C_3 >0$. In addition we notice that $\abs{u - \tilde{u}} \leq \abs{u}$ everywhere and ultimately
		\begin{equation*}
		\int_\V \abs{u} \, dx  \leq C_2\ \norm{u}_\Lpnu + C_1 C_3/k! \ \norm{u}_\Hnukp{k} \leq C \ \norm{u}_\Hnukp{k}
		\end{equation*} 
		establishing the inequality \eqref{eq:ineq_for_embedding}. One may easily verify that for $p = 1$ the proof of the inequality runs identically up to redefining the constants as  $C_1 := \norm{P_k/w}_{L^\infty(\V)}$ and $C_2 := \norm{1/w}_{L^\infty(\V\backslash \U_1)}$, where $P_k(x) = (x-x_0)^k$.
		
		In the remainder of the proof the function $u$ will be an element of the weighted Sobolev space $\Hnukp{k}$. Let $u_h \in \D(\R)$ denote a sequence of smooth functions that converges to $u$ in $\Hnukp{k}$. The sequence is Cauchy in $\Hnukp{k}$, hence by the inequality \eqref{eq:ineq_for_embedding} $u_h$ is also Cauchy in $L^1(\V)$ and thus has there a limit which we denote by $\bar{u}$. On the other hand $u_h \rightarrow u$ in $\Lpnu$ and, since $\bigl( \V \backslash \{x_0\}\bigr) \cap \cIcr{0,p}(w) = \varnothing$, the embedding $\Lpnu \hookrightarrow \Lloc{\V \backslash \{x_0\}}$ furnishes (up to choosing a subsequence)  $u_h(x) \rightarrow u(x)$ a.e. in $\V$. Therefore there must hold $u = \bar{u}$ a.e. and the inequality \eqref{eq:ineq_for_embedding} extends to $\Hnukp{k}$ which proves the embedding \eqref{eq:embedding}.
		
		Finally, if $\cIcr{k,p}(w) = \varnothing$, then for each $x \in \bar{I}$ there exists an $\bar{I}$-open neighbourhood $\V_x$ for which the inequality \eqref{eq:ineq_for_embedding} holds for smooth functions with a constant $C_x<\infty$. The family $\{\V_x : x\in \bar{I}\}$ is an open covering for $\bar{I}$ which is compact thus we can choose a finite subcover $\{\V_{x_n} : x_n \in \bar{I} \text{ for } 1\leq n \leq N \}$. Then inequality \eqref{eq:ineq_for_embedding} holds for the set $\V = \bar{I}$ and the constant $C = \sum_{1\leq n \leq N} C_{x_n} < \infty$. The embedding $\Hnukp{k} \hookrightarrow L^1(\bar{I})$ follows from the argument used in the previous paragraph. It is worth observing that, for $\cIcr{k,p}(w) = \varnothing$ with any $k \in \mathbb{N}$, in process we have  obtained $\cIcr{0,p}(w) \subset \{ x_n : 1\leq n \leq N \}$, namely $\cIcr{0,p}(w)$ is finite.
	\end{proof}
	
	\section{On sufficient conditions for jump-type discontinuities of a function and its derivatives in higher order weighted Sobolev space $\Hnukp{m}$ on the real line}
	\label{sec:necessary_conditions}
	
	For an interval $I =\Iaa \subset \R$ and a (possibly strongly degenerate) weight $w \in L^1(I)$ we continue to look at the $m$-th order weighted Sobolev space $\Hnukp{m}$ defined through completion of the space $\D(\R)$, the exponent $p$ is any real number from $\prange$. In the previous section we have established that, at a given point $x_0 \in I$ and chosen order $\bk\in \{0,\ldots,m-1\}$, the condition $x_0 \notin \cIcr{\Dk-1,p}(w)$, with $\Dk = m-\bk$, is sufficient to deduce continuity of $u$ and all the derivatives $\tgradnuk{k}u$ up to order $k=\bk$ at the point $x_0$, $u$ being any function in $\Hnukp{m}$. Further we ask whether this condition is optimal or, in other words, if the condition $x_0 \notin \cIcr{\Dk-1,p}(w)$ is necessary to have the aforementioned continuity of $u$ and its derivatives at $x_0$. To put it yet differently, we must check if criticality $x_0 \in \cIcr{\Dk-1,p}(w)$ implies existence of $\ustep \in \Hnukp{m}$ such that for the order $\bk$ the derivative $\tgradnuk{\bk} \ustep$ is discontinuous at $x_0$. More precisely we will verify whether there exists a function $\ustep \in \Hnukp{m}$ with
	\begin{equation}
	\label{eq:step_function}
	\tgradnuk{\bk}\ustep = \mathbbm{1}_{(x_0,\ap)} \qquad \muw\text{-a.e.}
	\end{equation}
	We agree that henceforward the symbol $\ustep$ will be consistently used to denote a candidate for a function from $\Hnukp{m}$ with $\bk$-th tangential derivative being a step function \eqref{eq:step_function}.
	
	\subsection{Application of Legendre-Fenchel transformation to showing potential discontinuities of functions in the first order weighted Sobolev space $\Hnukp{1}$}
	\label{sec:LFconjugate}

	Before stating the result for arbitrary $m \geq 1$ and $\bk \in \{0,\ldots,m-1\}$ we will look into the case of the first order Sobolev space  $\Hnukp{1}$, we thus specify $m=1$, $\bk=1$, $\Dk-1 =\linebreak m-\bk-1 =0$. Essentially, for a point $x_0 \in I$ we ask if the criticality $x_0 \in \cIcr{0,p}(w)$ guarantees that $\ustep = \mathbbm{1}_{(x_0,\ap)}$ is an element of $\Hnukp{1}$. In order to answer this question positively we must, by definition, find a sequence of smooth functions $\ustep_h \in \D(\R)$ such that $\ustep_h \rightarrow \ustep$ and $D \ustep_h \rightarrow v$ in $\Lpnu$ for some $v$. This was achieved for weights of the class $w_\gamma$ and $w_\mathrm{log}$, see the conclusions \eqref{eq:disc_Icr_wa} and \eqref{eq:disc_Icr_wlog} in Examples \ref{ex:wa} and \ref{ex:wlog} respectively. In both cases $v$, that is the tangential derivative of the step function $\tgradnuk{}\ustep$, turned out to be zero in $\Lpnu$. From those examples we learn that for a general weight $w \in L^1(I)$ we should seek a sequence of functions $\hphi_h \in \D(\R)$ such that
	\begin{equation}
	\label{eq:sequence_phi}
	\hphi_h \geq 0, \quad \mathrm{spt}\bigl( \hphi_h \bigr) \subset B(x_0,1/h), \quad \int_I \hphi_h \, dx = 1, \quad \norm{\hphi_h}_\Lpnu \xrightarrow{h \rightarrow \infty} 0.
	\end{equation}
	The first three conditions describe a sequence that approximates a Dirac delta measure at $x_0$. If such a sequence $\hphi_h$ exists, then by defining $\ustep_h(x):= \int_{-\infty}^x \hphi_h(y) \, dy$ we obtain $\ustep_h \rightarrow \ustep$ in $\Lpnu$ from the Lebesgue dominated convergence theorem. The forth condition guarantees that $D \ustep_h = \hphi_h \rightarrow 0$ in $\Lpnu$, which eventually (upon restricting $\ustep_h$ to a compact support in \nolinebreak $\R$) establishes that $\ustep \in \Hnukp{1}$.
	
	In order to show that, for a given weight $w$, the existence of a sequence \eqref{eq:sequence_phi} for $x_0$ stems from the fact that $x_0 \in \cIcr{0,p}(w)$ we must find a more intrinsic relation between the two properties of the point $x_0 \in I$. The idea proposed herein puts them in duality.
	
	For a weight $w \in L^1(I)$, an exponent $p \in \prange$ and an open subset $\U \subset I$ we define a convex energy functional $\J{\U}: L^p(\U) \rightarrow \Rb$, where $\Rb= \R \cup \{-\infty,\infty\}$. We stress that the Lebesgue space $L^p(\U)$ is intended with respect to Lebesgue measure instead of the measure with density $\muw$. For any $ v \in L^p(\U)$ we put
	\begin{equation}
	\label{eq:J_def}
	\J{\U}(v) :=\frac{1}{p} \int_{\U} w(x) \, \abs{v(x)}^p\, dx,
	\end{equation}
	which can be alternatively written as $	\J{\U}(v) = \int_{\U} f\bigl(x,v(x) \bigr)\, dx$ with the integrand $f: \nolinebreak \U \times \nolinebreak \R \rightarrow \nolinebreak \Rb$ defined by $f(x,\nu) :=\frac{1}{p}\, w(x) \, \abs{\nu}^p$.
	The convex conjugate, or Legendre-Fenchel conjugate of the functional $\Jc{\U}: L^{p'}(\U) \rightarrow \Rb$ is defined for any $v^* \in L^{p'}(\U)$ by the formula
	\begin{equation}
	\label{eq:Jc_def}
	\Jc{\U}\bigl(v^*\bigr) := \sup_{v \in L^p(\U)} \biggl\{\, \int_{\U} \, v \ v^* \, dx \ - \ \J{\U}(v) \,   \biggr\}
	\end{equation}
	where we have used the fact the integral of the product $v \, v^*$ is a natural duality pairing for the pair $\langle L^p,L^{p'} \rangle$; note that the weight $w$ is missing from the integral. The integrand $f$ is normal for every $p \in \prange$ and from the celebrated result by Rockafellar (see Theorem 2 in \cite{rockafellar1968}) we find that the operations of conjugation and integration in \eqref{eq:Jc_def} commute, more precisely $\Jc{\U}\bigl(v^*\bigr) = \int_\U f^*\bigl(x,v^*(x)\bigr) dx$ where the integrand $f^*: \U \times \R \rightarrow \Rb$ denotes the Legendre-Fenchel conjugate of $f(x, \,\cdot\,)$ with respect to the second argument. The closed formula for $f^*$ requires handling the case $p=1$ separately: for every $x\in \U$ and \nolinebreak $\nu^* \in \R$
	\begin{alignat*}{1}
	f^*\bigl(x,\nu^*\bigr) &= \frac{1}{p'} \left(\frac{\abs{\nu^*}}{(w(x))^{1/p}}\right)^{p'} \hspace{1.65 cm} \text{for } p\in (1,\infty), \\
	f^*\bigl(x,\nu^*\bigr) &=
	\left\{
	\begin{array}{cl}
	0 &\quad \mathrm{if}\ \ \abs{\nu^*}/w(x) \leq 1, \\
	\infty &\quad \text{otherwise}
	\end{array} \right. \hspace{0.6 cm} \text{for } p =1,
	\end{alignat*}
	hence the formula for $\Jc{\U}$ follows for $v^* \in L^{p'}(\U)$:
	\begin{alignat}{1}
	\label{eq:Jc_form_p}
	\Jc{\U}\bigl(v^*\bigr) &=  \frac{1}{p'} \int_\U \left(\frac{\abs{v^*(x)}}{(w(x))^{1/p}}\right)^{p'}\!dx \hspace{2cm} \text{for } p\in (1,\infty),\\
	\nonumber\\
	\label{eq:Jc_form_1}
	\Jc{\U}\bigl(v^*\bigr) &=
	\left\{
	\begin{array}{cl}
	0 &\quad \mathrm{if}\ \ \norm{v^*/ \, w}_{L^\infty(\U)} \leq 1, \\
	\infty &\quad \text{otherwise}
	\end{array} \right. \hspace {1.1cm}\text{for } p =1.
	\end{alignat}
	
	Next assume a point $x_0 \in I$ which is critical, i.e. $x_0 \in \cIcr{0,p}(w)$. For a fixed $\eps>0$ we denote an open neighbourhood $\Ue := B(x_0,\eps) \cap I$ and propose
	\begin{equation*}
	v^*_\eps := \eps = \mathrm{const} \quad \text{in } L^{p'}(\Ue). 
	\end{equation*}
	We will show that $\Jc{\Ue}\!\bigl( v_\eps^* \bigr) = \infty$ for any $p \in \prange$. It is straightforward that
	\begin{equation}
	\label{eq:infty_Jc_p}
	\text{for } p\in (1,\infty) \qquad \Jc{\Ue}\!\bigl( v_\eps^* \bigr) = \frac{\eps^{p'}}{p'} \int_{B(x_0,\eps)} \frac{1}{w^{p'/p}}\,  dx = \infty
	\end{equation}
	by the very definition \eqref{eq:def_Icrap} of the critical set $\cIcr{0,p}(w)$. In the case of $p = 1$ we need an extra argument: if $x_0 \in \cIcr{0,1}(w)$, then for arbitrarily small $\eps>0$ there exists a subset $A \subset \Ue = B(x_0,\eps) \cap I$ with positive Lebesgue measure such that $1/w(x) > 1/\eps$ for every $x\in A$, therefore
	\begin{equation}
	\label{eq:infty_Jc_1}
	\norm{v_\eps^* / w }_{L^\infty(\Ue)} \geq \norm{\eps / w }_{L^\infty(A)} > 1, \quad \text{hence} \quad \Jc{\Ue}\!\bigl( v_\eps^* \bigr) = \infty \quad \text{for } p=1.
	\end{equation}
	We can next confront the above results \eqref{eq:infty_Jc_p}, \eqref{eq:infty_Jc_1} with the general definition \eqref{eq:Jc_def} of the conjugate functional $\Jc{\Ue}$ at $v^* = v^*_\eps \equiv \eps$. As a result, for arbitrarily small $\eps >0$, we infer the existence of a sequence as follows:
	\begin{equation}
	\label{eq:sequence_vh}
	v_h \in L^p(\Ue), \qquad v_h \geq 0, \qquad \eps \int_\Ue \!v_h \, dx \ - \ \J{\Ue}(v_h) \ \xrightarrow{h \rightarrow \infty} \ 0,
	\end{equation}
	where we recall that $\Ue = B(x_0,\eps) \cap I$. Non-negativity of $v_h$ follows from the fact that otherwise we could always take the absolute value. Having the sequence $v_h$ at our disposal we are almost done with finding the sequence $\hphi_h$ satisfying \eqref{eq:sequence_phi} and thus proving that $\ustep$ is an element of $\Hnukp{1}$. We must obtain a smooth version $\phi_h$ of $v_h$ and then rescale it to $\hphi_h$. The first step shows small trouble, as the smooth $\phi_h$ must be close to $v_h$ in terms of both $\lvert\int_{\Ue} \phi_h dx - \int_{\Ue} v_h dx \rvert$ and $\abs{\J{\Ue}(\phi_h) - \J{\Ue}(v_h)}$. We observe that $\J{\Ue}(v) = 1/p\, \norm{v}^p_\Lpnu$ for any $v \in L^p(\Ue)$. For every $\delta>0$ each function $v_h$ can be smoothly approximated so that $\abs{\J{\Ue}(\phi_h) - \J{\Ue}(v_h)} < \delta$, but this is not enough since, in extreme case, $w$ could be zero function and then we would have no control over $\abs{\int_{\Ue} \phi_h dx - \int_{\Ue} v_h dx}$. We thus require
	\begin{equation}
	\label{eq:add_assum_Linfty}
	\text{an additional assumption on the weight } w\text{:} \quad \exists r>0 \text{ such that } w \in L^\infty\bigl( B(x_0,r) \cap I \bigr),
	\end{equation}
	in which case, for $\eps\leq r$, we have the continuous embedding $L^p(\Ue) \hookrightarrow \Lpnu$. We may now mollify $v_h$ by standard convolution, obtaining for arbitrary $\delta>0$ a function $\phi_h$ (with possibly slightly larger support than $\Ue$, due to arbitrariness of $\eps$ this fact is irrelevant and thus skipped later) such that $\norm{v_h - \phi_h}_{L^p(\Ue)} < \delta$. Both the terms in the divergent sequence \eqref{eq:sequence_vh} are thus well approximated with $v_h$ replaced by $\phi_h$. Obviously each $\phi_h$ is non-negative, as it was obtained by mollification of a non-negative function $v_h$.
	
	Since $\eps$ in \eqref{eq:sequence_vh} (and also for the sequence of smooth $\phi_h$) is arbitrary, through a diagonalization argument we can choose $\phi_h$ so that $\phi_h \in \D(\U_{\eps_h})$ and $\eps_h \int_{\U_{\eps_h}} \!\phi_h \, dx \ - \ \J{\U_{\eps_h}}(\phi_h) \rightarrow \infty$ with $\eps_h :=1/h$. We put $\Delta_h := \int_{\U_{\eps_h}} \!\phi_h \, dx$ and rescale our sequence:
	\begin{equation*}
	\hphi_h := \frac{1}{\Delta_h} \phi_h.
	\end{equation*}
	Obviously $\int_{B(x_0,1/h)} \!\hphi_h \, dx = 1$ for every $h$, whilst
	\begin{alignat}{1}
	\nonumber
	\eps_h \Delta_h - \J{\U_{\eps_h}}\!(\phi_h) > 0 \qquad &\Rightarrow \qquad \eps_h \Delta_h - \Delta_h^p \  \J{\U_{\eps_h}}\!(\hphi_h) > 0 \\
	\label{eq:estimate_on_J}
	&\Rightarrow \qquad \J{\U_{\eps_h}}\!(\hphi_h) < \frac{\eps_h}{\Delta^{p-1}_h}.
	\end{alignat}
	We recall that $\eps_h =1/h$ and $\Delta_h$ necessarily diverge to infinity, therefore $\J{\U_{\eps_h}}\!(\hphi_h) \rightarrow 0$ for any $p \in \prange$ or equivalently $\norm{\hphi_h}_\Lpnu \rightarrow 0$. We have found a sequence $\hphi_h$ that precisely satisfies the conditions \eqref{eq:sequence_phi}. 
	This establishes that the step function $\ustep = \mathbbm{1}_{(x_0,\ap)}$ is an element of the weighted Sobolev space $\Hnukp{1}$.
	
	\begin{remark}
		\label{rem:p_and_1_eps_h}
		We give a short comment on the choice of the function $v_\eps^* = \eps $ above. Eventually it has landed as $\eps_h = 1/h$ in the inequality \eqref{eq:estimate_on_J} that estimates $\J{\U_{\eps_h}}\!(\hphi_h)$. This inequality was to yield $\norm{\hphi_h}_\Lpnu \rightarrow 0$; we note that for $p>1$ it would still have done so notwithstanding $\eps_h$, which is due to $\Delta_h \rightarrow \infty$. In summary, for $p >1$ it was enough to take $v_\eps^* = 1 $ while in case of $p=1$ the trick with $\eps$ was essential. The same idea will motivate $\eps$ in \eqref{eq:sequence_phi_k} in the proof of Theorem \ref{thm:dirac_delta_approx_crit_point}.
	\end{remark}
	\begin{remark}
		For an interval (or in fact any open set) $I \subset \R$, an exponent $p\in \prange$ and a weight $w \in L^1(I)$ we write down the two following statements:
		\begin{enumerate}[(i)]
			\item the critical set is empty, namely $\cIcr{0,p}(w) = \varnothing$;
			\item the embedding $\Lpnu \hookrightarrow L^1(I)$ holds.
		\end{enumerate}  
		The implication (i) $\Rightarrow$ (ii) has been showed (up to locality) already by \cite{kufner1984} by means of H\"{o}lder inequality, see \eqref{eq:B_p_estimate}, and was repeatedly used here in Section \nolinebreak \ref{sec:sufficient_conditions}. If on the other hand the critical set $\cIcr{0,p}(w)$ is non-empty then our duality argument above has showed that there exists a sequence $v_h$ satisfying \eqref{eq:sequence_vh}. Then, upon  rescaling by $\Delta_h := \int_I v_h \,dx$, the sequence $\hat{v}_h:= v_h/\Delta_h$ satisfies $\norm{\hat{v}_h}_{L^1(I)} = 1$ and $\norm{\hat{v}_h}_{\Lpnu} \rightarrow 0$. This disqualifies the embedding (ii) and ultimately puts the two statements (i) and (ii) in equivalence. For the case $p=2$ this result was given in \cite{louet2014}, see Lemma 2.4 in this work. To prove the lemma Louet has used more elementary methods of measure theory and functional analysis. The approach proposed herein sheds light on the duality relation between (i) and (ii) by means of the  Legendre-Fenchel transformation.
	\end{remark}
	
	\subsection{Examining jump-type discontinuities of a function and its derivatives in higher order weighted Sobolev space $\Hnukp{m}$ via general duality theory. A stability assumption on the weight}
	
	\label{sec:general_duality}
	
	The previous subsection, that concerned the first order space $\Hnukp{1}$ only, was aimed to serve rather as demonstration of applying duality in examining the step functions $\ustep$ as elements of the weighted Sobolev space. For a higher order space $\Hnukp{m}$ and its function $\ustep$ we wish to infer that the fact $x_0 \in \cIcr{\Dk -1}$ makes it possible for the derivative $\tgradnuk{\bk} \ustep$ to be a step function $\mathbbm{1}_{(x_0,\ap)}$, recall that $\Dk = m-\bk$. First of all we must show that there exists a sequence $\hphi_h$ satisfying \eqref{eq:sequence_phi}, then we may define a sequence $\ustep_h$ so that $D^\bk \ustep_h(x) := \int_{-\infty}^x \hphi_h(y) \,dy$.  We obtain that $D^\bk \ustep_h \rightarrow \mathbbm{1}_{(x_0,\ap)}$ in $\Lpnu$ as desired and also $D^{\bk+1} \ustep_h = \hphi_h \rightarrow 0$ in $\Lpnu$. This, however, is not enough for $\ustep_h$ to converge to $\ustep$ in $\Hnukp{m}$. For that we need all the derivatives $D^k \ustep_h$ for $k\in \{\bk+1,\ldots,m\}$ to converge to zero in $\Lpnu$. In other words all the functions $\hphi_h, D\hphi_h,\ldots, D^{m-(\bk+1)} \hphi_k = D^{\Dk-1} \hphi_h$ must converge to zero. It is already established that the convergence of $\hphi_h$ is guaranteed whenever $x_0 \in \cIcr{0,p}(w)$. Now we must show that for convergence of the highest derivative: $D^{\Dk-1} \hphi_h \rightarrow 0$ in $\Lpnu$ all we require is $x_0 \in\cIcr{\Dk-1,p}(w)$. For that purpose we shall apply duality once more, only this time we will employ a more general theory of duality in calculus of variation (cf. \cite{ekeland1999}), which will considerably automate the proof, for instance it will furnish a sequence of functions that are already smooth, as opposed to \eqref{eq:sequence_vh}. Beforehand, for the sake of generality, we shall specify the definition of the critical point $x_0$ with respect to the side of $x_0$ where the degeneration of the weight $w$ occurs.
	
	Further we agree for the following notation: by $\Bl(x_0,\eps):= (x_0-\eps,x_0)$ we shall see the \textit{left open half-ball} around $x_0$ and, analogically, $\Br(x_0,\eps):= (x_0,x_0+\eps)$ will denote the \textit{right open half-ball}. During the construction of a sequence $\hphi_h$ in Section \ref{sec:LFconjugate} we have concluded that we need an additional assumption on the weight $w$: it had to be essentially bounded in some neighbourhood of $x_0$, see \eqref{eq:add_assum_Linfty}. The short comment at the end of the Example \ref{ex:wa} illustrates that this is too restrictive, since the weight can degenerate on e.g. right side of $x_0$ and blow up to infinity on the left side. In this scenario the sequence $\hphi_h$ can be  supported in the right half-balls $\Br(x_0,1/h)$. We require the following definitions:
	\begin{definition}
		\label{def:Icr_left_right}
		For a given weight $w \in L^1(I)$, an exponent $p \in [1,\infty)$ and an order $\al \geq \nolinebreak 0$ we shall say that $x_0 \in I$ is a \textit{right-sided} (or \textit{left-sided}) critical point, which will be denoted by $x_0 \in \cIcrr{\al,p}(w)$ (or $x_0 \in \cIcrl{\al,p}(w)$), whenever $x_0$ belongs to the sets in definitions (\ref{eq:def_Icrap},\ref{eq:def_Icra1}) of $\cIcr{\al,p}(w)$  with the ball $B(x_0,\eps)$ replaced by the half-ball $\Br(x_0,\eps)$ (or the half-ball $\Bl(x_0,\eps)$).
	\end{definition}
	
	It is clear that $x_0 \in \cIcrl{\al,p}(w)$ or $x_0 \in \cIcrr{\al,p}(w)$ implies $x_0 \in \cIcr{\al,p}(w)$. Conversely, if $x_0 \in \cIcr{\al,p}(w)$, then we have $x_0 \in \cIcrl{\al,p}(w)$ or $x_0 \in \cIcrr{\al,p}(w)$ or both. In case of boundary points of the interval $I = \Iaa$ the left end-point $\am$ is a critical point if and only if it is right-sided and the right end-point $\ap$ is critical if and only if it is left-sided.
	
	We will give a theorem that by means of duality furnishes a sequence $\hphi_h$ that approximates Dirac delta around a critical point $x_0$ with a small $\Lpnu$-norm of its $k$-th derivative. Beforehand we need a technical lemma which is obvious for smooth functions:
	\begin{lemma}
		\label{lem:positive_distribution}
		Let us be given a point $x_0$ on the real line $\R$. For a fixed $k \in \mathbb{N}$ we consider a set
		\begin{equation*}
		V_k := \left\{ v \in \Lloc{\R} \ : \ D^k v - \mathbbm{1}\ \text{ is a non-negative distribution in } \D'(\R)  \right\},	
		\end{equation*}
		where $\mathbbm{1}$ denotes a regular distribution induced by a constant function $1 \in \Lloc{\R}$.
		
		Then, for each element $v \in V_k$, there exists $\delta = \delta(v) >0$ such that
		\begin{equation}
		\label{eq:v_geq_Pk}
		\abs{v(x)} \geq \frac{\abs{x-x_0}^k}{k!} \qquad \text{ for a.e. } x\in B(x_0,\delta). 
		\end{equation}
		\begin{proof}
			It is enough to prove the thesis with \eqref{eq:v_geq_Pk} holding for a right half-ball $\Br(x_0,\delta)$ only. In the proof we do not distinguish distributions and measures or functions inducing them.
			
			The constraint $v \in V_k$ implies that $D^k v$ is a positive distribution itself and thus, by a version of Riesz representation theorem,  $\mu := D^k v$ is a positive Radon measure (see e.g. \cite{rudin1991}). Moreover $\mu(A) \geq \mathcal{L}^1(A)$ for every Borel set $A \subset \R$. In the case when $k = 0$ this is equivalent to $v$ being greater or equal to 1 a.e. in $\R$, which gives \eqref{eq:v_geq_Pk}.
			
			Further we may thus assume that $k \geq 1$; then $D^{k-1} v$ is an increasing and locally bounded function on $\R$. Therefore we may work with a right-sided continuous representative of $D^{k-1} v$ for which the formula below holds for each $x > x_0$:
			\begin{equation*}
			\bigl(D^{k-1} v\bigr) \!\bigl( x \bigr) = \bigl(D^{k-1} v\bigr)\!\bigl(x_0^+\bigr) + \mu \bigl((x_0,x] \bigr)
			\end{equation*}
			where $\left(D^{k-1} v\right)\!\left(x_0^+\right)$ is the right-sided limit. We notice that there exists $\bar\delta > 0$ such that
			\begin{equation}
			\label{eq:geq_x_x_0}
			\abs{\bigl(D^{k-1} v\bigr)\! \bigl(x\bigr)} \geq \abs{x-x_0} \qquad \forall \, x \in B_+(x_0,\bar\delta).
			\end{equation}
			Indeed, if $ \left(D^{k-1} v\right)\!\left(x_0^+\right) \neq 0$ this follows from right-sided continuity of $D^{k-1} v$; if, however, $\left(D^{k-1} v\right)\!\left(x_0^+\right) = 0$ then simply $\abs{\left(D^{k-1} v\right) (x)} = \mu \bigl((x_0,x]\bigr) \geq \mathcal{L}^1 \bigl((x_0,x] \bigr) = \abs{x-x_0}$ and this establishes the thesis for $k=1$.
			
			Now we look at the case when $k\geq 2$; since $D^{k-1} v \in \Lloc{\R}$ we observe that $v \in \nolinebreak C^{k-2}(\R)$ and $D^{k-2} v$ is locally absolutely continuous. From the Taylor expansion it is clear that whenever $\left(D^n v^* \right)(x_0^+) \neq 0$ for some $n \in \{0, \ldots , k-2\}$, there exists $\delta$ such that \eqref{eq:v_geq_Pk} holds. We assume otherwise and then the Taylor expansion for $x \geq x_0$ reduces to
			\begin{equation*}
			v(x) = \int_{x_0}^x \bigl(D^{k-1} v\bigr)\!(y) \ \frac{(x-y)^{k-2}}{(k-2)!}  \ dy 
			\end{equation*}
			which by \eqref{eq:geq_x_x_0} easily gives $\abs{v(x)} \geq \abs{x-x_0}^k/k!$ for every $x \in B_+(x_0,\bar{\delta})$. The proof concludes here.
		\end{proof}
	\end{lemma}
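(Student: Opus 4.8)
The plan is to translate the distributional hypothesis into a statement about a monotone function and then integrate it up $k$ times, carefully controlling signs along the way. First I would observe that $D^k v - \mathbbm{1}\geq 0$ means $D^k v$ is a non-negative distribution, hence by the Riesz representation theorem it is a positive Radon measure $\mu$ satisfying $\mu(A)\geq \Leb(A)$ for every Borel set $A$. The case $k=0$ is immediate, since then $v=\mu\geq 1$ a.e., so $\abs{v(x)}\geq 1 = \abs{x-x_0}^0/0!$. For $k\geq 1$ the antiderivative $D^{k-1}v$ is an increasing, locally bounded function, so I would fix its right-continuous representative and use $(D^{k-1}v)(x)=(D^{k-1}v)(x_0^+)+\mu\bigl((x_0,x]\bigr)$ for $x>x_0$. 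From this I would extract a right half-ball $\Br(x_0,\bar\delta)$ on which $\abs{(D^{k-1}v)(x)}\geq \abs{x-x_0}$: if $(D^{k-1}v)(x_0^+)\neq 0$ this follows from right-continuity (the left-hand side stays near a non-zero constant while $\abs{x-x_0}\to 0$), and if $(D^{k-1}v)(x_0^+)=0$ it follows directly from $\mu\bigl((x_0,x]\bigr)\geq \Leb\bigl((x_0,x]\bigr)=x-x_0$. This already settles $k=1$, where $D^{k-1}v=v$, and by the symmetric argument on the left half-ball I would obtain the claim on a full ball $B(x_0,\delta)$.

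For $k\geq 2$ I would first upgrade regularity: since $\mu$ is a measure, $D^{k-1}v\in L^\infty_{\mathrm{loc}}\subset \Lloc{\R}$, whence $v\in C^{k-2}(\R)$ with $D^{k-2}v$ locally absolutely continuous. I would then split on the lower-order Taylor data at $x_0$. If $(D^n v)(x_0)\neq 0$ for some $n\in\{0,\ldots,k-2\}$, then near $x_0$ the leading term $\tfrac{(D^n v)(x_0)}{n!}(x-x_0)^n$ dominates $\abs{x-x_0}^k$ (because $n<k$), so $\abs{v(x)}\geq \abs{x-x_0}^k/k!$ on a small enough half-ball. Otherwise all these derivatives vanish and the Taylor expansion with integral remainder collapses to
\[
v(x)=\int_{x_0}^x (D^{k-1}v)(y)\,\frac{(x-y)^{k-2}}{(k-2)!}\,dy,\qquad x\geq x_0.
\]
Here the weight $(x-y)^{k-2}/(k-2)!$ is non-negative, so if $D^{k-1}v$ keeps a constant sign on $(x_0,x)$ I can pull out absolute values and estimate from below by $\int_{x_0}^x \abs{y-x_0}\,\frac{(x-y)^{k-2}}{(k-2)!}\,dy$, a Beta-type integral that evaluates to exactly $\abs{x-x_0}^k/k!$.

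The main obstacle, and the only place where the monotonicity of $D^{k-1}v$ is genuinely needed, is guaranteeing that $D^{k-1}v$ does not change sign on the small half-ball, so that the pointwise bound $\abs{D^{k-1}v}\geq \abs{\,\cdot - x_0}$ transfers to a bound on $\abs{v}$ rather than being destroyed by cancellation inside the integral. I would dispose of this through the three sub-cases dictated by the increasing function: if $(D^{k-1}v)(x_0^+)>0$ then $D^{k-1}v>0$ on all of $(x_0,\infty)$; if $(D^{k-1}v)(x_0^+)=0$ then $D^{k-1}v=\mu\bigl((x_0,\cdot]\bigr)\geq 0$; and if $(D^{k-1}v)(x_0^+)<0$ then right-continuity forces $D^{k-1}v<0$ throughout a sufficiently small half-ball. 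In each case the sign is constant near $x_0^+$, which combined with the earlier estimate yields $\abs{v(x)}\geq \abs{x-x_0}^k/k!$ on $\Br(x_0,\delta)$; repeating the reasoning on the left and taking the smaller radius gives the full-ball conclusion.
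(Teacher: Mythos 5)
Your proof is correct and follows essentially the same route as the paper's: Riesz representation to get the measure $\mu\geq\Leb^1$, the right-continuous increasing representative of $D^{k-1}v$ giving $\abs{D^{k-1}v}\geq\abs{\,\cdot-x_0}$ near $x_0^+$, and the collapsed Taylor formula with integral remainder evaluating to the Beta-type integral $\abs{x-x_0}^k/k!$. Your explicit three-case treatment of why $D^{k-1}v$ keeps a constant sign near $x_0^+$ actually fills in the one step the paper dismisses with ``easily gives,'' where cancellation inside the remainder integral is the genuine concern.
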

	\begin{theorem}
		\label{thm:dirac_delta_approx_crit_point}
		Let us take an open interval $I = \Iaa \subset \R$, an exponent $p \in \prange$ and a weight $w \in L^1(I)$. We choose a point $x_0 \in \bar{I}$ and an order $k \in \mathbb{N}_+ \cup \{0\}$. We assume that there exists $r>0$ such that $w \in L^\infty\bigl(\Br(x_0,r) \cap I)\bigr)$.
		
		The following claim holds: if $x_0 \in \cIcrr{k,p}(w)$, then there exists a sequence $\hphi_h$ satisfying:
		\begin{equation}
		\label{eq:sequence_hphi_k}
		\hphi_h \in \D\bigl( \Br(x_0,1/h) \bigr), \quad \hphi_h \geq 0, \quad \int_I \hphi_h \, dx = 1, \quad \norm{\der{k}{\hphi_h}}_\Lpnu \xrightarrow{h \rightarrow \infty} 0.
		\end{equation}
		The same result can be obtained for the left-sided critical point $x_0 \in \cIcrl{k,p}(w)$ provided the half-balls $\Br$ above are replaced with the half-balls $\Bl$.
		\begin{proof}
			We will display the proof only for the case when $x_0 \in \cIcrr{k,p}(w)$ since in the case of a left-sided critical point $x_0$ the argument runs analogically. We note that such $x_0 \in \bar{I}$ cannot be the right end-point $\ap$. We will construct the sequence $\hphi_h$ starting from the index $h_0$ for which $\Br(x_0,1/h_0) \subset \Br(x_0,r) \cap I$.
			
			For a fixed $\eps\leq r$ we set $\Ue := \Br(x_0,\eps) \cap I$. We will show that there exists a sequence of smooth functions $\phi_n$ satisfying
			\begin{equation}
			\label{eq:sequence_phi_k}
			\phi_n \in \D\bigl( \Ue \bigr),\qquad \phi_n \geq 0, \qquad \eps \int_\Ue \phi_n \, dx - \J{\Ue}\bigl( \der{k}{\phi_n} \bigr) \ \xrightarrow{n \rightarrow \infty} \ \infty,
			\end{equation}
			where the energy functional $\J{\Ue}:L^p\bigl( \Ue \bigr) \rightarrow \Rb$ is defined in \eqref{eq:J_def}. 
			We explain how our thesis follows from the existence of such a sequence $\phi_n$. Starting from $h \geq h_0$ we may put $\eps_h := 1/h$ in \eqref{eq:sequence_phi_k} above and for each $h$ we find a respective sequence $\phi_{h,n}$ that varies in \nolinebreak $n$. The diagonalization argument yields a non-negative sequence $\phi_{h,h} \in \D\bigl( \Br(x_0,1/h) \bigr)$ that varies in $h$, then $\eps_h \int_{\U_{\eps_h}} \phi_{h,h} dx - \J{\U_{\eps_h}}\bigl(\der{k}{\phi_{h,h}}\bigr) \rightarrow \infty$. We put $\Delta_h:= \nolinebreak \int \phi_{h,h}\, dx$ and define a rescaled sequence $\hphi_h \in \D\bigl( \Br(x_0,1/h) \bigr)$
			\begin{equation}
			\label{eq:scaling}
			\hphi_h := \frac{1}{\Delta_h} \, \phi_{h,h}.
			\end{equation}
			An estimate analogical to \eqref{eq:estimate_on_J} yields $\J{\U_{\eps_h}}\bigl(\der{k}{\hphi_{h}}\bigr) < \eps_h / \Delta_h^{p-1}$. Since $\eps_h = 1/h \rightarrow \nolinebreak 0$ and $\Delta_h \rightarrow \infty$, we obtain $\J{\U_{\eps_h}}\bigl(\der{k}{\hphi_{h}}\bigr) \rightarrow 0$ and thus $\norm{\der{k}{\hphi_{h}}}_\Lpnu \rightarrow 0$. We have therefore constructed a sequence $\hphi_h$ satisfying \eqref{eq:sequence_hphi_k}.
			
			We have showed that the proof of the theorem boils down to pointing to a sequence $\phi_n$ in accordance with \eqref{eq:sequence_phi_k}. We stress that throughout the rest of the proof $\eps>0$ together with $\Ue= \Br(x_0,\eps) \cap I$ stay fixed, in addition we assume that $\eps \leq r$. While entering the duality theory we shall employ the notation from Chapter III in \cite{ekeland1999}. We put the pairs of spaces: $X := \D\bigl(\Ue \bigr)$, $X^*:= \D'\bigl(\Ue \bigr) $ and $Y := L^p\bigl(\Ue \bigr) ,$  $Y^* :=L^{p'}\bigl(\Ue \bigr) $ in duality with their standard pairings/topologies. Moreover we denote a continuous linear operator $\Lambda := (-1)^{k} D^k: X \rightarrow Y$ where $D^k$ is the classical $k$-th derivative (we multiply by the factor $(-1)^{k}$ for convenience further in); the conjugate operator $\Lambda^*: Y^* \rightarrow X^*$ is well defined and it is expressed by $\Lambda^* = D^k$ with the derivative intended in the distributional sense. In addition we introduce a closed half-space $X_+ := \nolinebreak \left\{ \phi \in X : \phi \geq 0 \right\}$. We define for every $v\in Y = L^p\bigl(\Ue \bigr) $ a $v$-perturbed variational problem
			\begin{equation}
			\label{eq:h}
			h(v) := \inf \biggl\{ F(\phi) + G\left(\Lambda \phi + v\right) \ : \ \phi \in X \biggr\},
			\end{equation}
			where $F: X \rightarrow \Rb$, $\,G:Y \rightarrow \Rb$  and for any $\phi \in X$ and $v \in Y$
			\begin{equation}
			\label{eq:fun_F}
			F(\phi) := - \eps \int_\Ue \phi \, dx + \mathbb{I}_{X_+}\! \left(\phi\right),\qquad G(v):= \J{\Ue}(v),
			\end{equation}
			where $\mathbb{I}_{X_+}$ denotes the indicator function of $X_+$. We recognize that the existence of a sequence $\phi_n$ satisfying \eqref{eq:sequence_phi_k} is equivalent to $-h(0)$ being equal to $+\infty$. For the time being let us assume that it is not, i.e. that $h(0) > -\infty$. By using a standard duality argument (see formula (4.18) in Chapter III of \cite{ekeland1999}) we arrive at the dual to the problem $h(0)$ with respect to the perturbation $v$:
			\begin{equation}
			\label{eq:h_star_star_abstract}
			h^{**}(0) = \sup \biggl\{ -F^*\!\left(-\Lambda^*v^*\right) - G^*\!\left(v^*\right) \ : \ v^* \in Y^* \biggr\}
			\end{equation}
			where $F^*,G^*$ are the Legendre-Fenchel conjugates of $F,G$. Obviously the functionl $G^*$ is equal to $\Jc{\Ue}$ whereas its explicit formula can be found in \eqref{eq:Jc_form_p} and \eqref{eq:Jc_form_1} for $p>1$ and $p=1$ respectively. For any $\phi^* \in X^* = \D'\bigl( \Ue \bigr)$ we compute
			\begin{equation}
			\label{eq:F_star}
			F^*(\phi^*) = \sup\left\{ \pairing{\phi,\phi^*} + \eps \int_\Ue \phi \, dx \ : \ \phi \in X_+ \right\} =
			\left \{
			\begin{array}{cl}
			0 \quad & \text{if} \quad \phi^* + \eps \,\mathbbm{1}_\Ue \in X_+^{\,0}, \\
			+ \infty  \quad & \text{otherwise},
			\end{array} \right.
			\end{equation}
			where by $X_+^{\,0} \subset \D'\bigl(\Ue \bigr) $ we see the polar of $X_+$, whereas $\mathbbm{1}_\Ue$ is a distribution induced by the characteristic function of $\Ue$. The functional $G$ is convex and continuous in $ Y = L^p\bigl(\Ue \bigr) $; its continuity follows from the fact that $w \in L^\infty(\Ue)$ (recall that $\eps \leq r$ and compare the assumptions of the theorem). Together with the assumption on $h(0)$ being finite we obtain stability of the duality problem which furnishes $h^{**}(0) = h(0)$, see Theorem 4.1 in Chapter III of \cite{ekeland1999}. By a careful substitution in \eqref{eq:h_star_star_abstract} we arrive at
			\begin{equation}
			\label{eq:h_star_star}
			-h(0) = -h^{**}(0) = \inf\biggl\{  \Jc{\Ue}(v^*)\ : \ v^* \in L^{p'}\bigl(\Ue \bigr) , \ -  D^k v^* + \eps \, \mathbbm{1}_\Ue \in X_+^{\,0}  \biggr\}.
			\end{equation}
			
			We will show that $\Jc{\Ue}(v^*) = \infty$ for every $ v^* \in L^{p'}\bigl(\Ue \bigr)$ satisfying the constraint above. We fix such a function $v^*$; our constraint says that $D^k v^* - \eps \, \mathbbm{1}_\Ue$ is a non-negative distribution on $\Ue$. From Lemma \ref{lem:positive_distribution} we infer through scaling by $\eps$ that there exists $\delta>0$ such that
			\begin{equation*}
			\abs{v^*(x)} \geq \frac{\eps}{k!} \abs{x-x_0}^{k} \qquad \text{ for a.e. } x\in \Br(x_0,\delta);
			\end{equation*}
			we may additionally require that $\delta\leq \eps$ so that $\Br(x_0,\delta) \cap I \subset \Ue$. We compute $\Jc{\Ue}(v^*)$ separately for the case when $p>1$ and $p=1$. For $p\in (1,\infty)$ the formula \eqref{eq:Jc_form_p} yields
			\begin{equation*}
			\Jc{\Ue}\bigl(v^*\bigr) \!=\!  \frac{1}{p'} \int_\Ue \left(\frac{\abs{v^*(x)}}{(w(x))^{1/p}}\right)^{p'}\!dx \geq  \frac{\eps^{p'}}{p' \left(k!\right)^{p'}} \! \int\limits_{B_+(x_0,\delta) \cap I} \! \left(\frac{\abs{(x-x_0)^{k}}}{\bigl(w(x)\bigr)^{1/p}}\right)^{p'} \! dx = \infty,
			\end{equation*}
			where the last integral is infinite by the very definition \eqref{eq:def_Icrap} (up to the right-sidedness) of the critical set $\cIcrr{k,p}(w)$. Next, for $p=1$ the definition \eqref{eq:def_Icra1} of $\cIcrr{k,1}(w)$ implies that for arbitrarily small $\delta>0$ there exists a subset $A\subset \Br(x_0,\delta) \subset \Ue$ of positive Lebesgue measure such that $\abs{x-x_0}^k/w(x) > k!/\eps$ for every $x\in A$. Then
			\begin{equation*}
			\norm{v^*/w}_{L^\infty(\Ue)} \geq  \Leb^1\text{-}\mathrm{ess\,sup} \left\{ \frac{\abs{x-x_0}^k/w(x)}{k!/\eps} : x\in A \subset \Br(x_0,\delta) \right\} > 1
			\end{equation*}
			and hence, by the formula \eqref{eq:Jc_form_1}, again $\Jc{\Ue}(v^*) = \infty$ for $p=1$.
			
			We have thus obtained that the infimum in $\eqref{eq:h_star_star}$ equals infinity, or alternatively that $h^{**}(0) = -\infty$ which contradicts stability of the duality problem -- in this setting this is possible only if $h(0) = - \infty$. Upon decoding the problem \eqref{eq:h} we have in fact proven that, for any $\eps$ satisfying $0<\eps\leq r$, there exists a sequence of smooth functions $\phi_n$ such that \eqref{eq:sequence_phi_k} holds. The proof is complete.
		\end{proof}
	\end{theorem}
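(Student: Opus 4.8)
The plan is to reduce the claim to a single localized variational problem and then dualize it, paralleling the first–order construction of Section \ref{sec:LFconjugate} but with the $k$-th derivative $\der{k}{\hphi_h}$ now playing the role previously played by $\hphi_h$ itself. I treat only the right–sided case $x_0 \in \cIcrr{k,p}(w)$, the left–sided case being verbatim with $\Br$ replaced by $\Bl$. Fixing an index $h_0$ with $\Br(x_0,1/h_0) \subset \Br(x_0,r) \cap I$, all of the construction takes place inside the region where the hypothesis $w \in L^\infty\bigl(\Br(x_0,r) \cap I\bigr)$ guarantees essential boundedness.

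First I would argue that it suffices to produce, for each fixed $\eps \le r$ with $\Ue := \Br(x_0,\eps) \cap I$, a sequence of non-negative $\phi_n \in \D(\Ue)$ satisfying
\[
\eps \int_\Ue \phi_n \, dx - \J{\Ue}\bigl(\der{k}{\phi_n}\bigr) \xrightarrow{n\to\infty} \infty.
\]
Given such families, a diagonalization over $\eps_h := 1/h$ followed by the rescaling $\hphi_h := \phi_{h,h}/\Delta_h$ with $\Delta_h := \int \phi_{h,h}\,dx$ recovers \eqref{eq:sequence_hphi_k}: support and normalization are immediate, while an estimate identical to \eqref{eq:estimate_on_J} gives $\J{\U_{\eps_h}}\bigl(\der{k}{\hphi_h}\bigr) < \eps_h/\Delta_h^{\,p-1} \to 0$, hence $\norm{\der{k}{\hphi_h}}_\Lpnu \to 0$. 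Retaining the factor $\eps$ rather than $1$ is exactly what secures the case $p=1$, as explained in Remark \ref{rem:p_and_1_eps_h}.

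The core is the existence of $\phi_n$ for fixed $\eps$, which I would phrase as the assertion that the convex problem $h(0) := \inf_{\phi \in \D(\Ue)}\bigl\{F(\phi) + G\bigl((-1)^k\der{k}{\phi}\bigr)\bigr\}$ has value $-\infty$, where $F(\phi) := -\eps\int_\Ue \phi\,dx + \mathbb{I}_{X_+}(\phi)$, $G := \J{\Ue}$, and $X_+ := \{\phi \in \D(\Ue) : \phi \ge 0\}$. Arguing by contradiction, suppose $h(0) > -\infty$. Within the Ekeland--Temam framework \cite{ekeland1999}, with $X = \D(\Ue)$, $Y = L^p(\Ue)$ and $\Lambda := (-1)^k D^k$ (so that $\Lambda^* = D^k$ distributionally), the conjugates are $G^* = \Jc{\Ue}$ given by \eqref{eq:Jc_form_p} and \eqref{eq:Jc_form_1}, while $F^*(\phi^*)$ vanishes precisely when $\phi^* + \eps\,\mathbbm{1}_\Ue$ lies in the polar cone $X_+^{\,0}$ and equals $+\infty$ otherwise. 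Since $w \in L^\infty(\Ue)$, the functional $\J{\Ue}$ is finite and continuous on $L^p(\Ue)$; together with $h(0) > -\infty$ this yields stability, hence $h^{**}(0) = h(0)$, and the dual reads
\[
-h(0) = \inf\Bigl\{ \Jc{\Ue}(v^*) : v^* \in L^{p'}(\Ue),\ D^k v^* - \eps\,\mathbbm{1}_\Ue \geq 0 \text{ in } \D'(\Ue) \Bigr\}.
\]

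The decisive and hardest step is to show this dual infimum equals $+\infty$, which contradicts $h^{**}(0) = h(0) > -\infty$ and thereby forces $h(0) = -\infty$. This requires upgrading the purely distributional constraint $D^k v^* \ge \eps$ to a pointwise bound, and that is exactly the content of Lemma \ref{lem:positive_distribution}: after scaling by $\eps$, every feasible $v^*$ obeys $\abs{v^*(x)} \ge \frac{\eps}{k!}\abs{x-x_0}^k$ on some $\Br(x_0,\delta)$. Substituting into $\Jc{\Ue}$ gives, for $p \in (1,\infty)$, a constant multiple of $\int_{\Br(x_0,\delta)\cap I}\bigl(\abs{x-x_0}^k/w^{1/p}\bigr)^{p'}\,dx = \infty$ by the definition \eqref{eq:def_Icrap} of $\cIcrr{k,p}(w)$; for $p = 1$ the definition \eqref{eq:def_Icra1} supplies a positive-measure set on which $\abs{x-x_0}^k/w > k!/\eps$, whence $\norm{v^*/w}_{L^\infty(\Ue)} > 1$ and $\Jc{\Ue}(v^*) = \infty$ by \eqref{eq:Jc_form_1}. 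I expect the genuine difficulty to sit entirely in Lemma \ref{lem:positive_distribution} — representing the positive distribution $D^k v^*$ by a Radon measure dominating $\Leb^1$ and running a Taylor expansion that handles the vanishing of the lower-order one-sided derivatives at $x_0^+$ — and in verifying the no-gap (stability) condition, both of which rest on the essential-boundedness hypothesis imposed on $w$.
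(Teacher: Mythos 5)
Your proposal is correct and follows essentially the same route as the paper's own proof: the same reduction to the fixed-$\eps$ problem \eqref{eq:sequence_phi_k} with diagonalization and rescaling, the same Ekeland--Temam duality setup with $\Lambda=(-1)^kD^k$, the same use of Lemma \ref{lem:positive_distribution} to turn the distributional constraint into a pointwise lower bound, and the same case split between $p>1$ and $p=1$ to conclude that the dual infimum is infinite.
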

	
	We recall the first paragraph of this subsection --  accordingly, Theorem \ref{thm:dirac_delta_approx_crit_point} for given $k\in \nolinebreak \mathbb{N}_+$ will prove useful in seeking discontinuities of functions in $\Hnukp{m}$ only if from properties \eqref{eq:sequence_hphi_k} ($\norm{\der{k}{\hphi_h}}_\Lpnu \rightarrow 0 $ in particular) we are able to infer $\norm{\der{n}{\hphi_h}}_\Lpnu \rightarrow 0$ for every other $n\in \{0,\ldots,k-1\}$. Although, we must remember that no condition was imposed on the measure $\muw$ (thus on the weight $w$ itself) that would guarantee a Poincar\'{e}-like inequality in the space $\Hnukp{1}$ (see \cite{hajlasz2000} or \cite{bouchitte2003}). The criticality of the point $x_0 \in \cIcr{k,p}(w)$ does not help, we give a simple example to illustrate the issue: 
	
	\begin{example}
		For an interval $I = (-2,2)$ we choose a point $x_0 = 0$. We define a comb-like weight function $w_\mathrm{comb} \in L^\infty(I)$ as follows
		\begin{equation*}
		w_\mathrm{comb}(x) := \sum_{k=0}^\infty w_0\!\left(2^k \left(x-\frac{1}{2^k}\right)\right) \quad \text{with} \quad w_0(x):= \left\{
		\begin{array}{cl}
		1 &\quad \mathrm{if}\ \ \abs{x} \leq 1/8, \\
		0 &\quad \mathrm{if}\ \ \abs{x} > 1/8;
		\end{array}
		\right.
		\end{equation*}
		the function $w_\mathrm{comb}$ is illustrated in Fig. \ref{fig:Poincare_argument_1}. It is straightforward that $x_0 =0$ is a right-sided critical point for any $p \in \prange$ and any order $\al\geq 0$, in particular $x_0 \in \cIcrr{1,p}\left( w_\mathrm{comb}\right)$.
		\begin{figure}[h]
			\centering
			\includegraphics*[width=0.7\textwidth]{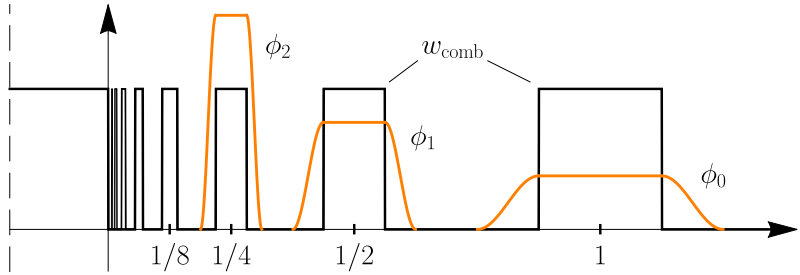}
			\caption{The comb-like weight $w_\mathrm{comb}$ and three first functions of the sequence $\hphi_h$ that smoothly approximates Dirac delta at $x_0=0$; different scales apply to $w_\mathrm{comb}$ and $\hphi_h$.}
			\label{fig:Poincare_argument_1}       
		\end{figure}
		
		Next, by $\eta_0$ we will denote any smooth function with compact support in $B(0,1/4)$ that in addition satisfies $0\leq \eta_0 \leq 1$ and  $\eta_0\equiv 1$ on $B(0,1/8)$. Then we normalize $\eta_0$ with respect to $L^1$-norm: $\eta:= \eta_0 \, /\, \norm{\eta_0}_{L^1(\R)}$. For each $h \in \mathbb{N}$ (including $h=0$) we define a function $\hphi_h \in \D\bigl(\Br(x_0,1/2^{h-1}) \bigr)$:
		\begin{equation*}
		\hphi_h(x):= 2^h \ \eta\!\left(2^h \left(x-\frac{1}{2^h}\right)\right);
		\end{equation*}
		in Fig. \ref{fig:Poincare_argument_1} we present the three first functions in the sequence $\{\hphi_h\}$.
		
		The weight $w_\mathrm{comb}$ together with the point $x_0$ meet the assumptions of Theorem \ref{thm:dirac_delta_approx_crit_point} and, for any $k \geq 1$, we easily verify that $\hphi_h$ could be the resulting sequence that satisfies \eqref{eq:sequence_hphi_k}. Indeed, we have $\int_I \hphi_h dx = 1$ and $\norm{\der{k}{\hphi_h}}_{L^p_{\mu_{w_\mathrm{comb}}}} \!= \int_I w_\mathrm{comb} \abs{\der{k}{\hphi_h}}^p dx = 0 $ for every \nolinebreak $h$. On the other hand
		\begin{equation*}
		\norm{\hphi_h}^p_{L^p_{\mu_{w_\mathrm{comb}}}} = \int_I w_\mathrm{comb} \abs{\hphi_h}^p dx  = \left(2^h C\right)^p \cdot \left(\frac{1}{4} \cdot\frac{1}{2^h}\right) = 2^{h(p-1)} \cdot \frac{C}{4},
		\end{equation*}
		where $C := \eta(0)$. To focus attention let us further choose $k=1$ and $p>1$; then we see that the sequence $\hphi_h$ cannot be utilized to show that the step function $\ustep = \mathbbm{1}_{(0,2)}$ is an element of $\Hnukp{2}$. Upon defining $\ustep_h(x) := \int_{-\infty}^x \hphi_h(y)\, dy$ we will obtain $\ustep_h \rightarrow \nolinebreak \ustep,\ \der{2}{\ustep_h} = \der{}{\hphi_h} \rightarrow \nolinebreak 0$ in $L^p_{\mu_{w_\mathrm{comb}}}$ whilst $\der{}{\ustep_h} = \hphi_h$ is therein unbounded.
	\end{example}
	
	\vspace{4mm}
	
	We should clarify what exactly the investigation of a comb-like weight above has brought; we assume a fixed $k \in \mathbb{N}_+$. We have not yet pointed to a weight $\wcrit$ for which, under the assumptions of Theorem \ref{thm:dirac_delta_approx_crit_point} (the assumption $x_0 \in \cIcr{k,p}(\wcrit)$ in particular), there is no sequence $\hphi_h$ satisfying \eqref{eq:sequence_hphi_k} and additionally $\norm{\der{n}{\hphi_h}}_{L^p_{\mu_\wcrit}}\rightarrow 0$ for all $n \in \{0,\ldots,k-2\}$. We have merely showed that such a sequence cannot be found with the use of Theorem \ref{thm:dirac_delta_approx_crit_point} as such. For the weight $w_\mathrm{comb}$ such a sequence does exists trivially, since functions $\phi_h$ can be squeezed into subsequent gaps where the weight is zero. Notwithstanding this, the critical weight $\wcrit$ spoken of above indeed can and will be constructed in the last part of proof of Theorem \ref{thm:jump_summary}. At this point we shall introduce an extra assumption on the behaviour of weight $w$ around a critical point $x_0$. Our goal is to locally retrieve a Poincar\'{e}-like inequality that is to furnish convergence of lower-order derivatives based solely on $\norm{\der{k}{\hphi_h}}_{\Lpnu}\rightarrow 0$. The example of comb-like weight suggests that it is the oscillation of the weight around $x_0$ that deprives us of this inequality. To keep a fair generality of the additional condition  we bear in mind that it should matter only on that side of $x_0$ where the weight degenerates -- we propose
	
	\begin{definition}
		\label{def:stab_crit_point}
		For a given interval $I = \Iaa \subset \R$, an exponent $p \in \prange$ and a weight $w \in L^1(I)$ we choose a point $x_0 \in \bar{I}$ that is a right-sided (or left-sided) critical point, namely $x_0 \in \cIcrr{0,p}(w)\ $ (or  $x_0 \in \cIcrl{0,p}(w)$).
		
		The point $x_0$ shall be called a \textit{right-sided stable} (or \textit{left-sided stable}) critical point if and only if there exists $r>0$ for which
		\begin{equation*}
		w=w(x) \text{ does not decrease with distance } \abs{x-x_0} \text{ in  } \Br(x_0,r)\cap I\ \  \bigl(\text{or } \Bl(x_0,r) \cap I \bigr),
		\end{equation*}
		where by monotonicity of $w$ we understand that there exists its a.e. equal representative $\breve{w}$ that is monotonic in the classical sense.
		\vspace{2mm}
		
		Moreover, we will shortly say that a critical point $x_0 \in \cIcr{0,p}(w)$ is \textit{stable} provided it is either right-sided stable or left-sided stable. 
	\end{definition}
	
	\begin{remark}
		One may easily notice that we may equivalently replace the condition of non-decreasing of the weight $w$ above by monotonicity only, the latter implies existence of a one-sided limit which has to be zero due to definition of a critical point -- therefore a non-negative monotonic function $w$ must increase with distance from $x_0$.
	\end{remark}
	
	The weights $w$ whose every critical point $x_0 \in \cIcr{0,p}(w)$ is stable shall be called \textit{stable weights}. We wish to henceforward deal with stable weights only, however we recall that we have already made another assumption on weights: a higher order Sobolev space $\Hnukp{m}$ was well-defined only for weights that are $\muw$-a.e. non-critical, namely $\cnoncrit$. We prove that the condition implying stability is stronger and, as a result, it suffices for handling Sobolev spaces $\Hnukp{m}$:
	
	\begin{proposition}
		Let $p \in \prange$, then every stable weight $w \in L^1(I)$ is $\muw$-a.e. non-critical, i.e. $\cnoncrit$.
		\begin{proof}
			We will only prove that $\muw\bigl(\cIcrr{0,p}(w)\bigr) = 0$. For the set of left-sided stable critical points $\cIcrl{0,p}(w)$ the proof is analogical and, since $\cIcr{0,p}(w) = \cIcrl{0,p}(w) \cup \cIcrr{0,p}(w)$, the thesis $\muw\bigl(\cIcr{0,p}(w)\bigr) = 0$ will follow.
			
			Let us thus take a right-sided stable critical point $x_0 \in \cIcrr{0,p}(w)$, then we can find $r>0$ such that $w$ is non-decreasing in $\Br(x_0,r) \subset I$. We consider, should it exist, any critical point $\tilde{x}_0 \in \cIcr{0,p}(w)$ in the half-ball $\Br(x_0,r)$ and we show that necessarily $w = 0$ a.e. in the interval $(x_0,\tilde{x}_0)$. Assume otherwise, then there would exist a subset of positive Lebesgue measure $A \subset (x_0,\tilde{x}_0)$ and a number $c>0$ such that $w(x) \geq c$ for every $x \in A$; monotonicity of $w$ would yield that $w \geq c >0$ in some neighbourhood of $\tilde{x}_0$ which disqualifies it as a critical point. We thus have proved that
			\begin{equation}
			\label{eq:zero_muw_on_ball}
			\muw\bigl( \Br(x_0,r) \cap \cIcrr{0,p}(w)  \bigr) = 0.
			\end{equation}
			The same argument can be repeated for every right-sided critical point $x \in \cIcrr{0,p}(w)$ yielding respectively a positive radius $r_x$. For convenience further we shall denote $F := \cIcrr{0,p}(w)$. We will show that $F \subset C \cup U $, where $C$ is at most countable subset of $F$ and $U :=\bigcup_{x \in F}  \Br(x,r_x)$. It is enough to put $C := \bigl\{ x \in F : x \notin \nolinebreak \Br(x',r_{x'}) \ \ \forall\, x' \in \nolinebreak F \bigr\}$ and prove that it is at most countable. We introduce a family of open intervals $\mathcal{C} := \bigl\{ \Br(x,r_x) : x\in C \bigr\}$ and note that, since $r_x$ is only one per each $x$, the sets $\mathcal{C}$ and $C$ have the same cardinality. According to definition of $C$ the open intervals in $\C$ must be pair-wise disjoint which implies that $\C$ must be at most countable and so must be the set \nolinebreak $C$.
			
			Since $\bigl\{\Br(x,r_x) : x \in F \bigr\}$ is an open cover of $U$, by Lindel\"{o}f's lemma (cf. \cite{kelley1955}) the set $U$ may be rewritten as $U = \bigcup_{n \in \mathbb{N}}  \Br(x_n,r_{x_n})$, for some sequence $\{x_n\}\subset F$. Ultimately $F = \cIcrr{0,p}(w) \subset C \cup \bigl(\bigcup_{n \in \mathbb{N}}  \Br(x_n,r_{x_n}) \bigr)$. The set $C$ is countable and for each $n$ we have $\muw\bigl( \Br(x_n,r_{x_n}) \cap \cIcrr{0,p}(w)  \bigr) = 0$ due to \eqref{eq:zero_muw_on_ball}, therefore, recalling that $\muw << \Leb^1$,  we arrive at  $\muw\bigl(\cIcrr{0,p}(w)\bigr) = 0$ and the proof is complete.
		\end{proof}
	\end{proposition}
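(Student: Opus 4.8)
The plan is to use the decomposition $\cIcr{0,p}(w) = \cIcrl{0,p}(w) \cup \cIcrr{0,p}(w)$ and prove separately that each piece is $\muw$-null; by the evident left--right symmetry it suffices to treat the right-sided critical set, so I would show $\muw\bigl(\cIcrr{0,p}(w)\bigr)=0$, the argument for $\cIcrl{0,p}(w)$ being identical with $\Br$ replaced by $\Bl$.

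The heart of the matter is a local observation. Fix a right-sided stable critical point $x_0$ and let $r>0$ be a radius on which $w$ admits a representative that is non-decreasing in $\abs{x-x_0}$, as furnished by Definition \ref{def:stab_crit_point}. I claim that for any further critical point $\tilde{x}_0 \in \Br(x_0,r)$ one necessarily has $w=0$ a.e.\ on $(x_0,\tilde{x}_0)$. Indeed, were $w$ positive on a set $A$ of positive Lebesgue measure inside $(x_0,\tilde{x}_0)$, say $w \geq c$ on $A$, then monotonicity would force $w \geq c$ on the whole interval lying to the right of any point of $A$; since $A$ accumulates from the left of $\tilde{x}_0$, this bounds $w$ below by a positive constant on a neighbourhood of $\tilde{x}_0$, rendering the integral $\int_{B(\tilde{x}_0,\eps)} 1/w^{p'/p}\,dx$ finite and contradicting $\tilde{x}_0 \in \cIcr{0,p}(w)$. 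Consequently every critical point inside $\Br(x_0,r)$ lies in the zero-set of $w$, giving
\begin{equation*}
\muw\bigl( \Br(x_0,r) \cap \cIcrr{0,p}(w) \bigr) = 0.
\end{equation*}

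It then remains to globalise this local statement by a covering argument, which I expect to be the main obstacle. To each $x \in F := \cIcrr{0,p}(w)$ I attach a monotonicity radius $r_x>0$ as above and split $F = C \cup U$, where $U := \bigcup_{x\in F} \Br(x,r_x)$ and $C$ collects those points of $F$ lying in none of the half-balls $\Br(x',r_{x'})$, $x'\in F$. The delicate point is bounding the size of $C$: a priori it could be uncountable, but the family $\{\Br(x,r_x) : x \in C\}$ is pairwise disjoint (by the very definition of $C$ each such point avoids every other half-ball), so this family, and hence $C$, is at most countable. For the open set $U$ I would invoke Lindel\"of's lemma to extract a countable subcover $U = \bigcup_{n} \Br(x_n,r_{x_n})$.

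Assembling the pieces, $F \subset C \cup \bigcup_n \Br(x_n,r_{x_n})$. Since $\muw \ll \Leb^1$ the countable set $C$ is $\muw$-null, while each half-ball satisfies $\muw\bigl(\Br(x_n,r_{x_n}) \cap \cIcrr{0,p}(w)\bigr) = 0$ by the local claim, so countable subadditivity yields $\muw\bigl(\cIcrr{0,p}(w)\bigr)=0$. The symmetric treatment of $\cIcrl{0,p}(w)$ together with the decomposition then gives $\cnoncrit$, completing the proof.
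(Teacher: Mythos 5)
Your proposal is correct and follows essentially the same route as the paper's own proof: the same local claim that $w$ must vanish a.e.\ between $x_0$ and any further critical point in the monotonicity half-ball, the same decomposition $F = C \cup U$ with $C$ countable by pairwise disjointness of the attached half-balls, and the same use of Lindel\"of's lemma to conclude by countable subadditivity.
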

	
	For stable weights Theorem \ref{thm:dirac_delta_approx_crit_point} can be directly utilized for proving existence of functions $\ustep \in \Hnukp{m}$ whose $\bk$ derivative admits jump-type discontinuity at critical points $x_0$ of suitable order:
	
	\begin{corollary}
		\label{cor:step_function}
		For an interval $I =\Iaa \subset \R$ and exponent $p \in \prange$ let $w \in L^1(I)$ be a stable weight. For an order $m \in \mathbb{N}_+$ we consider a weighted Sobolev space $\Hnukp{m}$. We pick $x_0 \in I$ and an order $\bk \in \{0,\ldots,m-1\}$.
		
		For $\Dk = m -\bk$ a claim follows: if $x_0 \in \cIcr{\Dk-1,p}(w)$, then there exists a function $\ustep \in \Hnukp{m}$ such that its $\bk$-th tangential derivative is a step function:
		\begin{equation*}
		\tgradnuk{\bk} \ustep = \mathbbm{1}_{(x_0,\ap)} \qquad \muw\text{-a.e.}
		\end{equation*}
		while its higher tangential derivative $\tgradnuk{k} \ustep$ for $k \in \{\bk+1,\ldots, m\}$ are zero in $\Lpnu$.
		
		Moreover, under the same assumptions, for any other $\tilde{k} \in \{\bk,\ldots,m\}$ a function $\ustep \in \Hnukp{m}$ may be found such that $\tgradnuk{\tilde{k}} \ustep = \mathbbm{1}_{(x_0,\ap)} \ $ $\muw$-a.e.
		\begin{proof}
			In this proof we agree that the classical $k$-th derivative of a function $u$ we will be shortly denoted by $\dr{k}{u}$. Our objective is to point to a sequence $\ustep_h \in \D(\R)$ such that $\ustep_h \rightarrow \ustep$ in $\Hnukp{m}$. This, by definition of $\Hnukp{m}$, requires functions $\vstep_k \in \Lpnu$ for each $k \in \{0,\ldots,m\}$ such that: $\dr{k}{\ustep_h} \rightarrow \vstep_k$ in $\Lpnu$ and, in particular, $\vstep_\bk = \mathbbm{1}_{(x_0,\ap)} \ $ $\muw$-a.e. 
			
			By definition the stable critical point $x_0\in \cIcr{\Dk-1,p}(w)$ is either left-sided stable or right-sided stable. For convenience and without loss of generality we shall assume it is left-sided; let $r>0$ be the radius of the left half-ball appearing in the Definition \ref{def:stab_crit_point}, in addition we enforce that $\Bl(x_0,r) \subset I$. We observe that the point $x_0$ satisfies the assumption of Theorem \ref{thm:dirac_delta_approx_crit_point} with $k$ substituted by $\Dk - 1 \geq 0$. Indeed, it suffices to observe that $w \in L^\infty(\Bl(x_0,r_1))$ for any $r_1 < r$ since $w$ is non-increasing in $x$ on $\Bl(x_0,r)$. Therefore we obtain a sequence of non-negative functions $\hphi_h \in \D\bigl(\Bl(x_0,1/h) \bigr)$ with $\int_I \hphi_h \, dx = 1$ and $\norm{\der{\Dk-1}{\hphi_h}}_\Lpnu \rightarrow 0$; we agree to start the sequence from $h = h_0$ such that $\Bl(x_0,1/h_0) \subset \Bl(x_0,r) \subset I$.
			
			Upon the sequence $\hphi_h$ we build our target sequence $\ustep_h$. The functions $\hphi_h$ approximate Dirac delta at $x_0$ and thus, for the sequence $\dr{\bk}{\ustep_h}$ to converge to $\mathbbm{1}_{(x_0,\ap)}$ we should define $\ustep_h$ so that  $\dr{\bk+1}{\ustep_h} = \hphi_h$ for each $h$, therefore we put
			\begin{equation}
			\label{eq:uh_def}
			\ustep_h (x) := \int_\am^x \hphi_h(y)\ \frac{(x-y)^{\bar{k}}}{\bar{k}!} \ dy
			\end{equation}
			and we will show that indeed $\ustep_h \rightarrow \ustep$ in $\Hnukp{m}$. Although the functions $\ustep_h$ do not have compact supports in $\R$ in general, we may easily remedy this via multiplying $\ustep_h$ by any fixed cut-off function $\varphi \in \D(\R)$ such that $\varphi \equiv 1$ on $I$; we shall omit this aspect as we carry \nolinebreak on.
			
			First we look at the $\bk$-th derivative which for any $x \in I$ equals
			\begin{equation*}
			\dr{\bk}{\ustep_h}(x) = \int_\am^x \hphi_h(y) \, dy.
			\end{equation*}
			Since $\hphi_h \in \D\bigl( \Bl(x_0,1/h) \bigr)$ and $\int_{\Bl(x_0,1/h)} \hphi_h \, dx = 1$ we infer that
			\begin{equation}
			\label{eq:pointwise_convergence_to_step_fun}
			0 \leq \dr{\bk}{\ustep}_h \leq  1,\qquad 
			\dr{\bk}{\ustep}_h(x) =
			\left \{
			\begin{array}{cl}
			0 \quad & \text{if} \quad x \leq x_0 - 1/h, \\
			1  \quad & \text{if} \quad x \geq  x_0,
			\end{array} \right.
			\end{equation}
			whereas the first property follows from the fact that each $\hphi_h$ is non-negative. Therefore the non-negative sequence $\dr{\bk}{\ustep}_h$ is uniformly bounded from above by $1$ and on $I$ it is point-wise convergent to the step function $\mathbbm{1}_{(x_0,\ap)}$. The Lebesgue dominated convergence theorem furnishes $\dr{\bk}{\ustep}_h \rightarrow \mathbbm{1}_{(x_0,\ap)}$ in $\Lpnu$ and also in $L^1(I)$.
			
			Next, assuming that $\bk>0$, we consider any $k \in \{0,\ldots,\bk-1\}$, by formula \eqref{eq:uh_def} we obtain 
			\begin{equation}
			\label{eq:uhk_def}
			\dr{k}{\ustep}_h (x) = \int_\am^x \hphi_h(y)\ \frac{(x-y)^{\bar{k}-k}}{(\bar{k}-k)!} \ dy = \int_\am^x \dr{\bk}{\ustep}_h(y)\ \frac{(x-y)^{\bar{k}-k-1}}{(\bar{k}-k-1)!}  \ dy.
			\end{equation}
			By convergence $\dr{\bk}{\ustep}_h \rightarrow \mathbbm{1}_{(x_0,\ap)}$ in $L^1(I)$ we arrive at a point-wise convergence for every $x\in I$ and every $k \in \{0,\ldots,\bk-1\}$
			\begin{equation}
			\label{eq:pointwise_convergence_to_polynomial}
			\lim_{h \rightarrow \infty}\dr{k}{\ustep_h}(x) = \int_\am^x \mathbbm{1}_{(x_0,\ap)}(y)\ \frac{(x-y)^{\bk-k-1}}{(\bk-k-1)!} \ dy =  \frac{1}{(\bar{k}-k)!}(x-x_0)^{\bar{k}-k} \ \mathbbm{1}_{(x_0,\ap)}(x).
			\end{equation}
			For each $k \in \{0,\ldots,\bk-1\}$ we observe that due to \eqref{eq:pointwise_convergence_to_step_fun} and \eqref{eq:uhk_def} an inequality $0 \leq \dr{k}{\ustep_h}(x) \leq (x-\am)^{\bar{k}-k}/(\bar{k}-k)!$ holds for all $x\in I$. The Lebesgue dominated convergence theorem once again guarantees that the point-wise convergence in \eqref{eq:pointwise_convergence_to_polynomial} implies convergence in $\Lpnu$.
			
			It remains to verify convergence of $\dr{k}{\ustep_h}$ for $k \in \{\bk+1,\ldots,m\}$, we note that in this range of $k$ we obtain
			\begin{equation*}
			\dr{k}{\ustep_h} = \dr{k-\bk-1}{\hphi_h},
			\end{equation*}
			in particular $\dr{\bk+1}{\ustep_h} = \hphi_h$ and $\dr{m}{\ustep_h} = \dr{m-\bk -1}{\hphi_h} = \dr{\Dk-1}{\hphi_h}$. We will show that all  $\dr{k}{\ustep_h}$ for $k \in \{\bk+1,\ldots,m\}$ converge to zero in $\Lpnu$ which will readily verify that $\ustep \in \Hnukp{m}$. The convergence $\dr{m}{\ustep_h} = \dr{\Dk -1}{\hphi_h} \rightarrow 0$ is guaranteed directly by Theorem \ref{thm:dirac_delta_approx_crit_point}. In case when $\Dk >1$ for convergence of lower derivatives $\dr{k}{\ustep_h}$ with $k \in \{\bk+1,\ldots,m-1 \} $ we must show that $\dr{n}{\hphi_h} \rightarrow 0$ in $\Lpnu$ for all $n \in \{0,\ldots, \Dk-2\}$ as well. To this aim
			we shall employ the stability condition that will provide us with a Poincar\'{e}-like inequality. We recall that, due to $x_0$ being a left-sided stable critical point, $w$ is non-increasing (below we work with the non-increasing representative) in the half-ball $\Bl(x_0,r)$. Since for indices $h\geq h_0 \geq 1/r$ the supports of $\hphi_h$ are contained in $\Bl(x_0,1/h) \subset \Bl(x_0,r) \subset I$, we may for any $n \in \mathbb{N}$ utilize the Fundamental Theorem of Calculus combined with H\"{o}lder inequality to write for every $x \in \Bl(x_0,r)$
			\begin{alignat}{1}
			\label{eq:poincare_deriv}
			&\,w(x)\, \abs{\dr{n}{\hphi_h}(x)}^p = w(x) \, \abs{\int_{x_0-r}^x \dr{n+1}{\hphi_h}(y) \,dy }^p \\ 
			\leq & \,w(x) \, \int_{x_0-r}^{x} \abs{\dr{n+1}{\hphi_h}(y)}^p dy \ \ \big\lvert x-(x_0-r) \big\rvert^{p/p'}  \nonumber  \\ \nonumber
			\leq  & \ C \int_{x_0-r}^{x} w(x) \abs{\dr{n+1}{\hphi_h}(y)}^p dy
			\leq \, C \int_{x_0-r}^{x} w(y) \abs{\dr{n+1}{\hphi_h}(y)}^p dy \leq C \, \norm{\dr{n+1}{\hphi_h}}_{\Lpnu}^p,
			\end{alignat}
			which is valid for $p\in (1,\infty)$ with constant $C = r^{p/p'}$, whilst for $p = 1$ it may be rewritten as  $w(x)\, \abs{\dr{n}{\hphi_h}(x)} \leq \norm{\dr{n+1}{\hphi_h}}_{L^1_{\muw}}$. In the inequality next to the last one we have explicitly used monotonicity of $w$, i.e. that for each $x \in \Bl(x_0,r)$ we have $w(y)\geq w(x)$ for every $y \in (x_0-r, x)$. By integrating the inequality above with respect to $x$ in the ball $\Bl(x_0,r)$ and raising to the power of $1/p$ we obtain a desirable Poincar\'{e}-like inequality for any $p \in \prange$ and any $n  \in\mathbb{N}$ (including $n=0$):
			\begin{equation}
			\label{eq:poincare}
			\norm{\dr{n}{\hphi_h}}_{\Lpnu} \leq \ r \ \norm{\dr{n+1}{\hphi_h}}_{\Lpnu} \qquad \forall \, h \geq h_0.
			\end{equation}
			Since $\dr{\Dk-1}{\hphi_h} \rightarrow 0$ in $\Lpnu$ we obtain by induction that
			\begin{equation*}
			\dr{n}{\hphi_h} \rightarrow 0 \quad \text{in } \Lpnu \qquad \text{for every } n \in  \{0,\ldots,\Dk-1\},
			\end{equation*}
			which in turn furnishes $\dr{k}{\ustep_h} \rightarrow 0$ in $\Lpnu$ for $k \in \{\bk+1,\ldots,m\}$.
			We sum up our results for the sequence $\ustep_h$ defined in \eqref{eq:uh_def}:
			\begin{enumerate}[(i)]
				\item (only in case when $\bk>0$) for $k \in \{0,\ldots,\bk-1 \}$
				\begin{alignat*}{1}
				&\dr{k}{\ustep_h} \rightarrow \vstep_k \quad \text{in} \quad \Lpnu \\
				&\text{with} \quad \vstep_k(x):= \frac{1}{(\bar{k}-k)!}(x-x_0)^{\bar{k}-k} \ \mathbbm{1}_{(x_0,\ap)}(x)
				\quad \text{ for } \muw\text{-a.e. } x; 
				\end{alignat*}
				\item for $k = \bk$
				\begin{equation*}
				\dr{\bk}{\ustep_h} \rightarrow \vstep_\bk  \quad \text{in} \quad \Lpnu \quad \text{with} \quad \vstep_\bk:= \mathbbm{1}_{(x_0,\ap)} \quad \muw\text{-a.e.};
				\end{equation*}
				\item for $k \in \{\bk+1,\ldots,m\}$
				\begin{equation*}
				\dr{k}{\ustep_h} \rightarrow \vstep_k  \quad \text{in} \quad \Lpnu \quad \text{with} \quad \vstep_k:= 0 \quad \muw\text{-a.e.}
				\end{equation*}
			\end{enumerate}
			and the proof is complete, its "moreover part" is a direct consequence of monotonicity $\cIcr{\Dk-1,p}(w) =\cIcr{m-\bk-1,p}(w) \subset \cIcr{m-\tilde{k}-1,p}(w)$ whenever $\tilde{k}\geq \bk$, see \eqref{eq:monot_al}.
		\end{proof}
	\end{corollary}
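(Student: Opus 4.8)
The plan is to construct an explicit approximating sequence $\ustep_h \in \D(\R)$ converging to the desired $\ustep$ in the norm of $\Hnukp{m}$, built on top of the Dirac-approximating sequence $\hphi_h$ furnished by Theorem \ref{thm:dirac_delta_approx_crit_point}. Since $w$ is stable and $x_0 \in \cIcr{\Dk-1,p}(w)$, the point $x_0$ is either left- or right-sided stable critical; I would fix, without loss of generality, the left-sided case, so that $w$ admits a non-increasing representative on a one-sided neighbourhood $\Bl(x_0,r) \subset I$. Local monotonicity forces $w \in L^\infty\bigl(\Bl(x_0,r_1)\bigr)$ for every $r_1 < r$, which is exactly the hypothesis needed to invoke Theorem \ref{thm:dirac_delta_approx_crit_point} with its order $k$ replaced by $\Dk - 1$. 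This produces non-negative $\hphi_h \in \D\bigl(\Bl(x_0,1/h)\bigr)$ with unit integral and $\norm{D^{\Dk-1}\hphi_h}_\Lpnu \to 0$.

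Next I would integrate $\hphi_h$ up $\bk+1$ times, setting
\begin{equation*}
\ustep_h(x) := \int_\am^x \hphi_h(y)\, \frac{(x-y)^{\bk}}{\bk!}\, dy,
\end{equation*}
so that $D^{\bk+1}\ustep_h = \hphi_h$ while $D^{\bk}\ustep_h(x) = \int_\am^x \hphi_h(y)\,dy$. Because $\hphi_h$ is supported in $(x_0-1/h,x_0)$ and integrates to one, the sequence $D^{\bk}\ustep_h$ is squeezed between $0$ and $1$ and converges pointwise to $\mathbbm{1}_{(x_0,\ap)}$; dominated convergence (using $w \in L^1$) then upgrades this to convergence in $\Lpnu$. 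For the lower orders $k \in \{0,\ldots,\bk-1\}$ I would differentiate under the integral sign to see that $D^k\ustep_h$ converges pointwise to the truncated polynomial $\frac{(x-x_0)^{\bk-k}}{(\bk-k)!}\,\mathbbm{1}_{(x_0,\ap)}(x)$, dominated by $(x-\am)^{\bk-k}/(\bk-k)!$, again yielding $\Lpnu$-convergence. The derivatives for $k>\bk$ satisfy $D^k\ustep_h = D^{k-\bk-1}\hphi_h$, and the topmost one $D^m\ustep_h = D^{\Dk-1}\hphi_h$ already tends to zero.

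The crux --- and the step I expect to be the main obstacle --- is forcing the intermediate derivatives $D^n\hphi_h$ for $n \in \{0,\ldots,\Dk-2\}$ to vanish in $\Lpnu$ as well; the comb-type example preceding the corollary shows this can fail without stability, so this is precisely where local monotonicity must be spent. I would recover a one-sided Poincar\'e-type inequality on $\Bl(x_0,r)$: for $x$ in this half-ball write $D^n\hphi_h(x) = \int_{x_0-r}^x D^{n+1}\hphi_h(y)\,dy$, apply H\"older, and then use monotonicity in the form $w(y)\geq w(x)$ for $y \in (x_0-r,x)$ to pull the weight inside the integral, arriving at $\norm{D^n\hphi_h}_\Lpnu \leq r\,\norm{D^{n+1}\hphi_h}_\Lpnu$ (with the obvious modification when $p=1$). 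A finite downward induction starting from $D^{\Dk-1}\hphi_h \to 0$ then annihilates every $D^n\hphi_h$ with $n \leq \Dk-1$, hence every $D^k\ustep_h$ with $k>\bk$.

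Finally I would promote each $\ustep_h$ to a compactly supported test function by multiplying by a fixed cut-off equal to $1$ on $I$, and record that since $w$ is $\muw$-a.e. non-critical the classical derivatives agree $\muw$-a.e. with the tangential ones; this identifies the limits as $\tgradnuk{\bk}\ustep = \mathbbm{1}_{(x_0,\ap)}$ and $\tgradnuk{k}\ustep = 0$ for $k>\bk$, placing $\ustep$ in $\Hnukp{m}$. The ``moreover'' clause should be immediate from monotonicity of the critical sets in the order: for $\tilde{k} \geq \bk$ one has $m-\tilde{k}-1 \leq \Dk-1$, whence $\cIcr{\Dk-1,p}(w) \subset \cIcr{m-\tilde{k}-1,p}(w)$ by \eqref{eq:monot_al}, and the first part applies verbatim with $\bk$ replaced by $\tilde{k}$.
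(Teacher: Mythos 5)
Your proposal is correct and follows essentially the same route as the paper's own proof: the same left-sided reduction, the same $(\bk+1)$-fold integration of the Dirac-approximating sequence from Theorem \ref{thm:dirac_delta_approx_crit_point}, the same dominated-convergence identification of the limits of $D^k\ustep_h$ for $k\leq\bk$, and the same monotonicity-based Poincar\'e inequality $\norm{D^n\hphi_h}_\Lpnu \leq r\,\norm{D^{n+1}\hphi_h}_\Lpnu$ to kill the intermediate derivatives. No substantive differences.
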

	
	We now apply the results of this subsection to all standard weights considered in this work:
	
	\begin{example}
		For an interval $I = (-1/2,1/2)$, a point $x_0 \in \bar{I}$ and any $p \in \prange$ we consider weights $w_\gamma \in L^\infty(I)$ for $\gamma \in [0,\infty)$ and $w_\mathrm{log}, w_\mathrm{exp} \in L^\infty(I)$ as below:
		\begin{equation*}
		w_\mathrm{log} (x) = \frac{1}{\abs{\log(\abs{x-x_0})}}, \qquad w_\gamma(x) = \abs{x-x_0}^\gamma, \qquad w_\mathrm{exp}(x) = \frac{1}{\exp(1/\abs{x-x_0})}.
		\end{equation*}
		We may write for any $m \geq 1$
		\begin{equation*}
		\mathbbm{1}_{(x_0,1/2)} \in H^{m,p}_{\mu_{w_\gamma}} \qquad \Leftrightarrow \qquad
		\left\{
		\begin{array}{cl}
		\gamma \geq p\, m-1 &\quad \mathrm{if}\ \ p\in (1,\infty), \\
		\gamma >m-1 &\quad \mathrm{if}\ \ p=1.
		\end{array}
		\right.
		\end{equation*}
		The RHS of the above decides that $x_0 \in \cIcr{m-1,p}(w_\gamma)$, see the characterization \eqref{eq:Icra_wa}. The weight $w_\gamma$ is stable and thus the implication $\Leftarrow$ is a direct consequence of Corollary \ref{cor:step_function}. The converse can be inferred from Corollary \ref{cor:higer_order_continuity}: if the RHS is false then $x_0 \notin \cIcr{m-1,p}(w_\gamma)$ which tells us that any $u \in \Hnukp{m}$ must have a continuous representative.
		
		The weight $w_\mathrm{exp}$ is stable as well and \eqref{eq:Icra_wexp} immediately yields through Corollary \ref{cor:step_function} that
		\begin{equation*}
		\mathbbm{1}_{(x_0,1/2)} \in H^{m,p}_{\mu_{w_\mathrm{exp}}} \qquad \text{for every }  m \geq 0 \text{ and } p\in \prange,
		\end{equation*}
		which, if we agree upon such a convention, can be rewritten as
		\begin{equation*}
		\mathbbm{1}_{(x_0,1/2)} \in H^{\infty,p}_{\mu_{w_\mathrm{exp}}} \qquad \text{for every }  p\in \prange.
		\end{equation*}
		Since $x_0$ is the only critical point for the weight $w_\mathrm{exp} \in L^\infty(I)$, the fact that  $\mathbbm{1}_{(x_0,1/2)} \in \Hnukp{\infty}$ may be interpreted as splitting the domain $I$ into $(-1/2,x_0)$ and $(x_0,1/2)$ in terms of theory of weighted Sobolev spaces presented in this work: there hold embeddings $\Lpmu \hookrightarrow \Lloc{(-1/2,x_0)}$ and $\Lpmu \hookrightarrow \Lloc{(x_0,1/2)}$ which allow to equivalently define the weighted Sobolev spaces via weak derivatives separately in the two subdomains.
		
		A somewhat opposite result is obtained for the weight $w_\mathrm{log}$. In Example \ref{ex:wlog} we have showed that $x_0 \in \cIcr{0,p}(w_\mathrm{log})$ if and only if $p = 1$. It is even easier to show that $x_0 \notin \cIcr{m-1,p}(w_\mathrm{log})$ for every $m > 1$ and any $p$, including $p=1$. The weight $w_\mathrm{log}$ is obviously stable, similarly as above we may infer
		\begin{equation*}
		\mathbbm{1}_{(x_0,1/2)} \in H^{m,p}_{\mu_{w_\mathrm{log}}} \qquad \Leftrightarrow \qquad p = 1 \text{ and } m\leq 1. 
		\end{equation*}
	\end{example}
	
	\subsection{Some additional remarks on the links between weight's criticality and jump-type discontinuities of functions in $\Hnukp{m}$. A discussion on optimality of the stability assumption}
	\label{sec:stab_cond_opt}
	
	We agree again that $x_0 \in I=\Iaa$ and we are given a weight $w \in L^1(I)$ that is $\muw$-a.e. non-critical. For a function $u \in \Hnukp{m}$ with any order $m \in \mathbb{N}_+$ we want to look at its $\bk$-th derivative $\tgradnuk{\bk} u$; we denote $\Dk := m - \bk$. The previous subsection was aimed at showing that criticality $x_0 \in \cIcr{\Dk-1,p}(w)$ implies existence of a function $\ustep \in \Hnukp{m}$ with its $\bk$-th tangential derivative $\tgradnuk{\bk} \ustep$ being a step function. We have succeeded, although not in full generality, for we had imposed an additional assumption on the weight --  the stability, which, roughly speaking, enforces degeneracy of the weight to be monotonic around critical points. Currently we shall analyse whether any extra condition was indeed necessary, and if so, whether it could be weakened, for instance it is perhaps possible to only assume $w$ to be of bounded variation. We shall start with a fact that somewhat summarizes the methodology employed so far: the path, leading from criticality $x_0 \in \cIcr{\Dk-1,p}(w)$ to $\bk$-th derivative of function from $\Hnukp{m}$ admiting a jump at $x_0$, passes through mutually dual variation problems. To focus attention we shall consider only the case when $\bk = 0$ resulting in $\Dk = m$. This way we will be checking whether the step function $\ustep = \mathbbm{1}_{(x_0,\ap)}$ itself is an element of the space $\Hnukp{m}$. The proof of Corollary \ref{cor:step_function} shows that this is not restrictive as the derivatives of order below $\bk$ are not the issue.
	\begin{theorem}
		\label{thm:jump_summary}
		For an exponent $p \in \prange$, an interval $I = \Iaa$, a weight $w \in \nolinebreak L^1(I)$ that is $\muw$-a.e. non-critical, any $m \in \mathbb{N}_+$ let us choose a point $x_0\in I$. We additionally assume that there exists $r>0$ such that $w \in L^\infty\bigl( B(x_0,r) \bigr)$; for $0<\eps\leq r$ we set $\Ue := B(x_0,\eps) \cap I$. We recall the energy functional for any $v \in L^p(\Ue)$:
		\begin{equation*}
		\J{\Ue}(v) =\frac{1}{p} \int_{\Ue} w(x) \, \abs{v(x)}^p\, dx,
		\end{equation*}
		while its Legendre-Fenchel conjugate $\Jc{\Ue}(v^*)$ for $v^* \in L^{p'}(\Ue)$ is given in \eqref{eq:Jc_form_p}, \eqref{eq:Jc_form_1}. We write down the following statements:
		\begin{enumerate}[(i)]
			\setlength{\itemsep}{3pt}
			\item  for every $\eps >0$
			\begin{equation*}
			\Pro_{\Sigma,+} := \sup \left\{ \eps \int_{\Ue} \phi \, dx - \sum_{k = 0}^{m-1} \J{\Ue}\bigl( D^k \phi \bigr) \ : \ \phi \in \D(\Ue), \ \phi \geq 0  \right\} = \infty ;
			\end{equation*}
			\item there holds
			\begin{equation*}
			\ustep = \mathbbm{1}_{(x_0,\ap)} \in \Hnukp{m};
			\end{equation*}
			\item  for every $\eps >0$
			\begin{equation*}
			\Pro_{\Sigma} := \sup \left\{ \eps \int_{\Ue} \phi \, dx - \sum_{k = 0}^{m-1} \J{\Ue}\bigl( D^k \phi \bigr) \ : \ \phi \in \D(\Ue)  \right\} = \infty ;
			\end{equation*}
			\item for every $\eps >0$
			\begin{equation*}
			\Pro^*_{m-1} := \inf \left\{ \Jc{\Ue}(v^*) \ : \ v^* \in L^{p'}(\Ue), \ \ D^{m-1}v^* = \eps \, \mathbbm{1}_{(x_0,\ap)} \right\} = \infty;
			\end{equation*}
			
			\item $x_0$ is a critical point of order $m-1$, namely
			\begin{equation*}
			x_0 \in \cIcr{m-1,p}(w);
			\end{equation*}
			\item for every $\eps >0$
			\begin{equation*}
			\Pro^*_{m-1,+} := \inf \left\{ \Jc{\Ue}(v^*) \ : \ v^* \in L^{p'}(\Ue),  \ \ D^{m-1}v^* \geq \eps\,\mathbbm{1}_{(x_0,\ap)} \right\} = \infty.
			\end{equation*}
		\end{enumerate}
		The following relations hold for any weight $w$ satisfying the conditions above:
		\begin{equation*}
		(i) \Rightarrow (ii) \Rightarrow (iii) \quad \Rightarrow \quad (iv) \Leftrightarrow (v) \Leftrightarrow (vi).
		\end{equation*}
		Moreover:
		\begin{enumerate}[(I)]
			\item if either: $m =1$ and $w$ satisfies only the conditions above, or $m>1$ and $w$ is a stable weight, then all the sentences are equivalent for sufficiently small $\eps$, in particular
			\begin{equation*}
			x_0 \in \cIcr{m-1,p}(w) \qquad \Leftrightarrow \qquad  \, \ustep =  \mathbbm{1}_{(x_0,\ap)} \in \Hnukp{m};
			\end{equation*}
			\item  for $m > 1$ the above equivalence does not hold in general for non-stable weights; in particular for every $m > 1$ and $p \in \prange$:
			\begin{equation*}
			\text{there is a non-stable weight } \wcrit \text{ such that } x_0 \in \cIcr{m-1,p}(\wcrit) \text{ and } \mathbbm{1}_{(x_0,\ap)} \notin H^{m,p}_{\mu_\wcrit};
			\end{equation*}
			whereas $\wcrit$ can be chosen from $BV(I)$ whenever $p>1$.
		\end{enumerate}
		
		\begin{proof}
			
			We begin by showing implications that are either straightforward or that have already been an element of some proof carried out earlier:
			
			\vspace{2mm}
			\noindent \underline{Proof of (iv) $\Leftrightarrow$ (v) $\Leftrightarrow$ (vi)}:
			\vspace{1mm}
			
			Assuming that either (iv) or (vi) holds, we obtain that $\Jc{\Ue}(v_\eps^*) = \infty$ for a function $v_\eps^*(x):= \eps\, (x-x_0)^{m-1}/(m-1)!$. Comparing the formulas \eqref{eq:Jc_form_p} or \eqref{eq:Jc_form_1} for $\Jc{\Ue}$ and, respectively, definitions \eqref{eq:def_Icrap} or \eqref{eq:def_Icra1} of the critical set we infer that $x_0 \in \cIcr{m-1,p}$. Further, according to Lemma \ref{lem:positive_distribution} every function $v^*$ satisfying the constraints either in $\Pro^*_{m-1}$ or $\Pro^*_{m-1,+}$ satisfies $\abs{v^*(x)} \geq \abs{v_\eps^*(x)}$ in some ball $B(x_0,\delta)$. Then, if (v) holds, the formula for $\Jc{\Ue}$ gives $\infty$ for each such $v^*$. The equivalences (iv) $\Leftrightarrow$ (v) $\Leftrightarrow$ (vi) are thus established.
			
			\vspace{2mm}
			\noindent \underline{Proof of (i) $\Rightarrow$ (ii)}:
			\vspace{1mm}
			
			Next we depart from (i) and let $\phi_h$ be the maximizing sequence for $\Pro_{\Sigma,+}$. Upon using a diagonal argument along with scaling as in \eqref{eq:scaling} we arrive at a non-negative sequence $\hphi_h \in \D\bigl( B(x_0,1/h) \bigr)$ satisfying $\int_{B(x_0,1/h)} \hphi_h \, dx = 1$ and $\norm{\der{k}{\hphi_h}}_\Lpnu \rightarrow 0$ for all $k \in \{0,\ldots,m-1\}$. Then, the sequence $\ustep_h$ defined by $\ustep_h(x):= \int_{\am}^{x} \hphi_h(y)\, dy$ proves to converge to $\ustep = \mathbbm{1}_{(x_0,\ap)}$ in $\Hnukp{m}$. The argument runs almost identically to the proof of Corollary \ref{cor:step_function}, we do not, however, require the stability condition which was essential therein: unlike here, only the highest derivative $\der{m-1}{\hphi_h}$ was guaranteed to converge to zero in $\Lpnu$. The implication (i) $\Rightarrow$ (ii) is obtained.
			
			\vspace{2mm}
			The next result is new and shall require more effort:
			
			\vspace{2mm}
			\noindent \underline{Proof of (ii) $\Rightarrow$ (iii)}: \nopagebreak
			\vspace{1mm}

			We depart from the fact that $\ustep= \mathbbm{1}_{(x_0,\ap)} \in \Hnukp{m}$. Our goal is to  construct a sequence $\hphi_h$ of smooth functions satisfying
			\begin{equation}
			\label{eq:sequence_phi_sum}
			\hphi_h \in \D\bigl( \Ue \bigr), \quad \int_\Ue \hphi_h \, dx  \rightarrow 1, \quad \norm{\der{k}{\hphi_h}}_\Lpnu \rightarrow 0 \quad \text{for } \ k\in \{0,\ldots,m-1\};
			\end{equation}
			(note that we skip the condition $\hphi_h \geq 0$ as we will not be able to guarantee it and hence we only show implication (ii) $\Rightarrow$ (iii) instead of stronger implication (ii) $\Rightarrow$ (i), see also Remark \ref{rem:pos_phi}).
			We explain how \eqref{eq:sequence_phi_sum} furnishes a maximizing sequence $\phi_h$ for $\Pro_{m-1}$. It is possible to propose a sequence $\gamma_h$ of positive numbers such that $\gamma_h \rightarrow \infty$ and still $\lim_{h\rightarrow \infty }\gamma_h \norm{\der{k}{\hphi_h}}_\Lpnu = \nolinebreak 0$ for any $k$ considered. Performing scaling delivers the target sequence:
			\begin{equation*}
			\phi_h := \gamma_h \,\hphi_h.
			\end{equation*}
			Indeed, we recall that $\Jc{\Ue}(v) = \frac{1}{p}\,\norm{v}_\Lpnu^p$ for every $v \in L^p(\Ue)$, then for any $\eps>0$ we have
			\begin{equation*}
			\eps \int_{\Ue} \phi_h \, dx - \sum_{k = 0}^{m-1} \J{\Ue}\bigl( D^k \phi_h \bigr) = \gamma_h \left(\eps \int_{\Ue} \hphi_h \, dx \right) - \sum_{k = 0}^{m-1} 1/p \left(\gamma_h \norm{D^k \hphi_h}_\Lpnu \right)^p
			\end{equation*}
			which diverges to infinity validating (iii). While seeking the sequence $\hphi_h$ satisfying \eqref{eq:sequence_phi_sum} we distinguish two cases as below; for given $\delta>0$ we agree to denote $\U^\mathrm{ncr}_{\delta,+} := \bigl(\Br(x_0,\delta) \cap I\bigr) \backslash \cIcr{0,p}(w)$ and $\U^\mathrm{ncr}_{\delta,-}$ analogically (note that both defined sets are open):
			\begin{enumerate}[{Case} (1):]
				\item There exists $\delta>0$ such that either $\U^\mathrm{ncr}_{\delta,-} = \varnothing$ or $\U^\mathrm{ncr}_{\delta,+} = \varnothing$,
			\end{enumerate}
			we may for instance assume the case when $\U^\mathrm{ncr}_{\delta,+} = \varnothing$. Then, since $w$ is $\muw$-a.e. non-critical, for every $\eps$ the open intersection $\Ue \cap \Br(x_0,\delta)$ is of zero $\muw$ measure -- we may trivially find a functions $\hphi \in \D\bigl( \Ue \cap \Br(x_0,\delta) \bigr)$ with $\int_\Ue \hphi \, dx =1$ and $\norm{\der{k}{\hphi}}_\Lpnu = 0$ for any natural \nolinebreak $k$.
			\begin{enumerate}[{Case} (2):]
				\item For every $\delta>0$ both $\U^\mathrm{ncr}_{\delta,-}$ and $\U^\mathrm{ncr}_{\delta,+}$ are non-empty,
			\end{enumerate}
			we look at a sequence $\ustep_h$ converging in $\Hnukp{m}$ to $\ustep = \mathbbm{1}_{(x_0,\ap)}$ , due to (ii) such sequence is guaranteed by definition of the Sobolev space itself. We have $\der{k}{\ustep_h} \rightarrow \tgradnuk{k} \ustep$ in $\Lpnu$ for every $k \in \{0,\ldots,m\}$ where $v_k := \tgradnuk{k} \ustep$ is some element from $\Lpnu$, in particular $v_0 = \mathbbm{1}_{(x_0,\ap)}$. Firstly we must show that all the tangential derivatives $v_k =\tgradnuk{k}\ustep$ for $k \in \{1,\ldots,m\}$ must be zero functions. It suffices to prove that $\tgradnuk{} \ustep =0$ and the rest will follow from the operator $\tgradnuk{}$ being closed in $\Hnukp{1}$. We define an open set $\U^\mathrm{ncr}_+ := \bigl((x_0,\infty) \cap I\bigr) \backslash \cIcr{0,p}(w)$. Upon recalling the established embedding $\Lpnu \hookrightarrow \Lloc{\U^\mathrm{ncr}_+}$ we follow the proof of Corollary \ref{cor:higer_order_continuity} to infer that $\Hnukp{1} \hookrightarrow W^{1,1}_{\mathrm{loc}}\bigl(\U^\mathrm{ncr}_+\bigr)$. By the iterative definition of higher order Sobolev space $\Hnukp{m}$ we have $\ustep = \mathbbm{1}_{(x_0,\ap)} \in \Hnukp{1}$ and the derived embedding yields that in $\U^\mathrm{ncr}_+$ the function $\tgradnuk{} \ustep=\tgradnuk{}\left( \mathbbm{1}_{(x_0,\ap)}\right)$ is the distributional derivative of $\mathbbm{1}_{(x_0,\ap)}$, yet the latter is constantly equal to 1 in that set, hence $\tgradnuk{} \ustep = 0 \ $ $\muw$-a.e. in $\U^\mathrm{ncr}_+$. We proceed analogically for the set $\U^\mathrm{ncr}_- := \bigl((-\infty,x_0) \cap I\bigr) \backslash \cIcr{0,p}(w)$ where $\ustep \equiv 0$. Since the weight $w$ is $\muw$-a.e. non-critical the sum $\U^\mathrm{ncr}_- \cup \U^\mathrm{ncr}_+$ is of full measure $\muw$ which eventually gives $\tgradnuk{} \ustep = 0$ in $\Lpnu$.
			
			Next, for a given $\eps >0$ we choose and fix a smooth cut-off function $\varphi_\eps$ such that
			\begin{equation}
			\label{eq:varphi_eps}
			\varphi_\eps \in \D(\Ue), \quad 0\leq \varphi_\eps \leq 1 \quad \mathrm{spt} \left( \varphi'_\eps \right) \subset \U^\mathrm{ncr}_{\eps,-} \cup \U^\mathrm{ncr}_{\eps,+}, \quad \varphi(x_0) = 1, 
			\end{equation}
			which roughly means that $\varphi_\eps$ increases in a compact subset of $ \U^\mathrm{ncr}_{\eps,-}$ to equal 1 around $x_0$ and then it decreases to zero in a compact subset of $ \U^\mathrm{ncr}_{\eps,-}$. We define a sequence
			\begin{equation*}
			\hphi_h := \varphi_\eps \, \der{}{\ustep_h} \ \in \D(\Ue)
			\end{equation*}
			and prove that it satisfies \eqref{eq:sequence_phi_sum} which will ultimately give the implication (ii) $\Rightarrow$ (iii). We have obtained above that $\der{k}{\ustep_h} \rightarrow 0$ in $\Lpnu$ for $k \in \{1,\ldots,m\}$, therefore, since $\varphi$ is fixed, the Leibniz differentiation formula furnishes $\norm{\der{k}{\hphi_h}}_\Lpnu \rightarrow 0$ for each $k \in \nolinebreak \{0,\ldots,m-1\}$. It is thus enough to check that $\int_\Ue \hphi_h \, dx \rightarrow 1$; through integration by parts we obtain
			\begin{equation*}
			\int_\Ue \hphi_h \, dx = - \int_{\Ue} \varphi'_\eps \ \ustep_h \, dx 
			= - \int_{\Ue}\varphi'_\eps \, \left( \ustep_h - \mathbbm{1}_{(x_0,\ap)} \right)  dx - \int_{\Br(x_0,\eps) \cap I} \varphi'_\eps \, dx,
			\end{equation*}
			where the last term may be rewritten as $-\int_{\U^\mathrm{ncr}_{\eps,+}} \varphi'_\eps \, dx =1$. We must show that the other term converges to zero; we use a trick that is standard for this work:
			\begin{equation*}
			\abs{\int_{\Ue} \varphi'_\eps \, \left( \ustep_h - \mathbbm{1}_{(x_0,\ap)} \right)  dx} \leq  \biggl(\int_{\Ue} w \, \abs{ \ustep_h - \mathbbm{1}_{(x_0,\ap)} }^p  dx \biggr)^\frac{1}{p} \biggl( \int_{\Ue} \frac{\abs{\varphi'_\eps}^{p'}}{w^{p'/p}} dx \biggr)^\frac{1}{p'},
			\end{equation*}
			where the second factor (that for $p=1$ should be read as $\norm{\varphi'_\eps/w}_{L^\infty(\Ue)}$) is finite since $\mathrm{spt}\left(\varphi'_\eps \right)$ is a compact subset of an open set $\U^\mathrm{ncr}_{\eps,-} \cup \nolinebreak \U^\mathrm{ncr}_{\eps,+}$ which is disjoint with $\cIcr{0,p}(w)$. The first factor converges to zero due to $\ustep_h\rightarrow  \mathbbm{1}_{(x_0,\ap)}$ in $\Lpnu$. The implication (ii) $\Rightarrow$ (iii) is now proved.
			
			\vspace{2mm}
			We have showed that the question whether the step function $\ustep = \mathbbm{1}_{(x_0,\ap)}$ is an element of $\Hnukp{m}$ revolves around two variational problems $\Pro_\Sigma$ and $\Pro_{\Sigma,+}$. Similarly, criticality $x_0 \in \cIcr{m-1,p}(w)$ was proved to be equivalent to the problem $\Pro^*_{m-1}$ or $\Pro^*_{m-1,+}$. At this point we link the two groups of statements (i),(ii),(iii) and (iv),(v),(vi) by recognizing duality between the respective variational problems:
			
			\vspace{2mm}
			\noindent \underline{Proof of (iii) $\Rightarrow$ (iv)}:
			\vspace{1mm}
			
			It has already been established in the proof of Theorem \ref{thm:dirac_delta_approx_crit_point} that the problem $\Pro^*_{m-1,+}$ is dual to:
			\begin{equation*}
			\Pro_{m-1,+} = \sup \left\{ \eps \int_{\Ue} \phi \, dx - \J{\Ue}\bigl( D^{m-1} \phi \bigr)  \ : \ \phi \in \D(\Ue), \ \phi \geq 0  \right\} = \Pro^*_{m-1,+}
			\end{equation*}
			and no duality gap occurs provided $\eps\leq r$. The primal to $\Pro^*_{m-1}$ is identical up to enforcing non-negativity of smooth functions:
			\begin{equation*}
			\Pro_{m-1} = \sup \left\{ \eps \int_{\Ue} \phi \, dx - \J{\Ue}\bigl( D^{m-1} \phi \bigr)  \ : \ \phi \in \D(\Ue) \right\} = \Pro^*_{m-1}.
			\end{equation*}
			It is straightforward that $\Pro_{\Sigma} = \infty$ implies $\Pro_{m-1} = \infty$ which by the above gives $\Pro^*_{m-1} = \nolinebreak \infty$. We have thus arrived at (iii) $\Rightarrow$ (iv).
			
			\vspace{2mm}
			\noindent \underline{Proof of claim (I) through verifying (vi) $\Rightarrow$ (i)}:
			\vspace{1mm}
			
			The implication (vi) $\Rightarrow$ (i) is essentially the main part of the proof of Corollary \ref{cor:step_function}, nevertheless we give a short argument to show how the stability of the weight enters here to render the statements (i)-(vi) equivalent.
			
			The point (vi) states that $\Pro^*_{m-1,+} = \infty$, therefore the duality above yields $\Pro_{m-1,+} = \nolinebreak \infty$ and we may pick the maximizing sequence $\phi^\eps_h$ for a given $\eps>0$. If $m = 1$ then naturally problems $\Pro_{m-1,+}$ and $\Pro_{\Sigma,+}$ coincide and (vi) implies (i). For $m>1$ we assume that $w$ is a stable weight and, due to $x_0 \in \cIcr{m-1,p}(w)$ by the equivalence (v) $\Leftrightarrow$ (vi), we have the monotonicity of the weight on one of the sides of $x_0$ where the weight degenerates. We may thus assume that each of the functions $\phi^\eps_h$ is supported on this very side. Then the Poincar\'{e}-like inequality \eqref{eq:poincare} holds for sufficiently small $\eps$, which eventually gives a finite constant $C>0$ such that $\sum_{k = 0}^{m-1} \J{\Ue}\bigl( D^k \phi^\eps_h \bigr) \leq C \cdot  \J{\Ue}\bigl( D^{m-1} \phi^\eps_h \bigr)$ for every $h$. Let us now consider the problem  $\Pro_{\Sigma,+}$ for $\eps = \bar{\eps}$. One may check that for $\tilde{\eps} \nolinebreak:= \min \{\bar{\eps},\bar{\eps}/C \}$ the functions $\phi^{\tilde{\eps}}_h$ serve as a maximizing sequence for $\Pro_{\Sigma,+}$ rendering it infinite, hence (i) is achieved. 
			
			\vspace{2mm}
			\noindent \underline{Proof of claim (II) through contradicting implication (v) $\Rightarrow$ (iii)}:
			\vspace{1mm}
			
			To prove the claim (II), which ultimately justifies our assumption of stability of the weight, it suffices to come up with a counter-example in the form of a weight $\wcrit$ for which (v) holds whilst (iii) does not. For this purpose we shall require the problem dual to $\Pro_{\Sigma}$; we will also dualize $\Pro_{\Sigma,+}$ in order to have a complete view on duality for the four variational problems appearing in statements (i)-(vi). We only sketch the derivations as the details are fully analogical to duality argument in the proof of Theorem \ref{thm:dirac_delta_approx_crit_point}. For the dual of  $\Pro_{\Sigma,+}$ all it takes is to redefine spaces $Y$, $Y^*$, the functional $G$ and the operator $\Lambda$, the spaces $X= \D(\Ue)$ and $X^* = \D'(\Ue)$ remains unchanged, together with the functional $F: X \rightarrow \Rb$ defined in \eqref{eq:fun_F}. We propose $Y$ to be a Cartesian product $Y := \bigl(L^p(\Ue) \bigr)^{m}$ and so $Y^* = \bigl(L^{p'}(\Ue) \bigr)^{m}$; for the pairing we naturally choose $\pairing{(v_0,\ldots,v_{m-1})\, , \,(v^*_0,\ldots,v^*_{m-1})}_{\pairing{Y,Y^*}} := \sum_{k = 0}^{m-1} \int_{\Ue} v_k\,v^*_k \, dx$. We define $G\bigl( (v_0,\ldots,v_{m-1}) \bigr) := \sum_{k = 0}^{m-1} \J{\Ue}(v_k)$ and the continuous linear operator $\Lambda:X \rightarrow Y$ is chosen such that  $\pi_k\bigl(\Lambda\, \phi\bigr) = (-1)^k \der{k}{\phi}$ for $\phi \in X$ and $k \in \{0,\ldots,m-1\}$; $\pi_k$ denotes the projection on $k$-th coordinate and the derivatives are understood in the classical sense. We arrive at the adjoint operator $\Lambda^*:Y^* \rightarrow X^*$ and the Legendre-Fenchel conjugate $G^*:Y^* \rightarrow \Rb$ as follows: $\Lambda^*(v^*_0,\ldots,v^*_{m-1}) = \sum_{k = 0}^{m-1} D^k v^*_k$, where here each derivative is distributional, and $G^*\bigl( (v^*_0,\ldots,v^*_{m-1}) \bigr) := \sum_{k = 0}^{m-1} \Jc{\Ue}(v^*_k)$. The algorithm given in Chapter III of \cite{ekeland1999} furnishes the dual problem, again the stability of duality is guaranteed by the fact that $w \in L^\infty(\Ue)$ (being true due to the assumption $\eps\leq r$):
			\begin{equation*}
			\Pro^*_{\Sigma,+} = \inf \left\{ \sum_{k = 0}^{m-1} \Jc{\Ue}(v^*_k) \ : \ v^*_k \in L^{p'}(\Ue),  \ \sum_{k = 0}^{m-1} D^k v^*_k \geq \eps\,\mathbbm{1}_{(x_0,\ap)} \right\} = 	\Pro_{\Sigma,+}.
			\end{equation*}
			In the case of the problem $\Pro_\Sigma$ we must drop the term $\mathbbm{I}_{X_0}$ in the definition of $F$, see \eqref{eq:fun_F}, then we arrive at
			\begin{equation*}
			\Pro^*_{\Sigma} = \inf \left\{ \sum_{k = 0}^{m-1} \Jc{\Ue}(v^*_k) \ : \ v^*_k \in L^{p'}(\Ue),  \ \sum_{k = 0}^{m-1} D^k v^*_k = \eps\,\mathbbm{1}_{(x_0,\ap)} \right\} = 	\Pro_{\Sigma}.
			\end{equation*}
			We can now see that our objective of validating the claim (II) can be reduced to finding a weight $\wcrit \in BV(I)$ for which $x_0 \in \cIcr{m-1,p}(\wcrit)$ whereas $\Pro^*_\Sigma <\infty$, then we will also have $\Pro_\Sigma <\infty$ contradicting (iii) and thus (ii). To focus attention, our strategy will be to first give an example of such a non-stable weight $\wcrit$ for $m=2$ and $p>1$, afterwards we shall explain how to adapt the example for other cases. We also start with a weight that is not of bounded variation, which we shall fix towards the end.
			
			Assuming that $p>1$ for an interval $I = (-1,1)$ we propose a weight $\wcrit \in L^\infty(I)$
			\begin{equation*}
			\wcrit(x) :=   \left\{
			\begin{array}{cl}
			x^\gamma &\ \ \mathrm{if}\ \ x\in \Ione, \\
			1 &\ \ \mathrm{if}\ \ x \in I\backslash \Ione,
			\end{array}
			\right. \qquad \Ione:= \bigcup_{n=0}^\infty \big(x_-(n)\, ,\, x_+(n) \big], \qquad \gamma := p\left(1 +\frac{2}{p'} \right),
			\end{equation*}
			where for any natural number $n$ we set $x_+(n):= \frac{1}{2^n}$, $\ x_-(n) := \frac{1}{2^n}-\frac{1}{2^{2(n+1)}}$ so that
			\begin{equation}
			\label{eq:theta}
			\frac{\abs{x_+(n)-x_-(n)}}{\abs{x_+(n)-x_+(n+1)}} = \frac{1}{2^{n+1}}.
			\end{equation}
			The weight $\wcrit$ is plotted in Fig. \ref{fig:wcrit}(a) for the case $p=2$; we observe that "contribution of the part $x^\gamma$" decreases when approaching $x_0:=0$, which is reflected in \eqref{eq:theta} above.
			\begin{figure}[h]
				\centering
				\subfloat[]{\includegraphics*[width=0.45\textwidth]{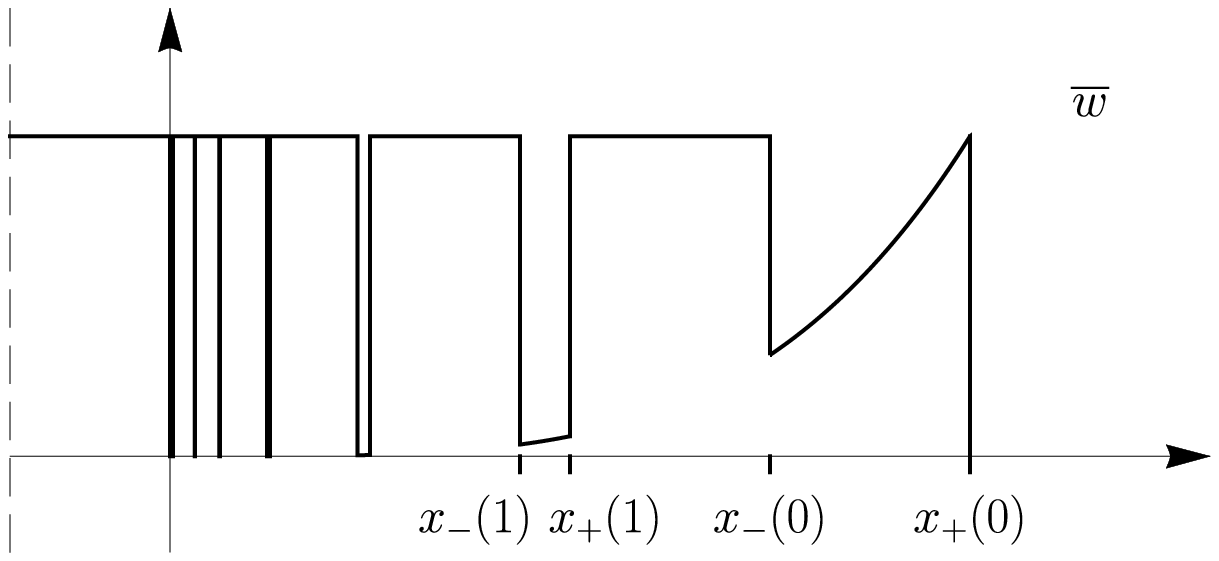}}\hspace{1cm}
				\subfloat[]{\includegraphics*[width=0.45\textwidth]{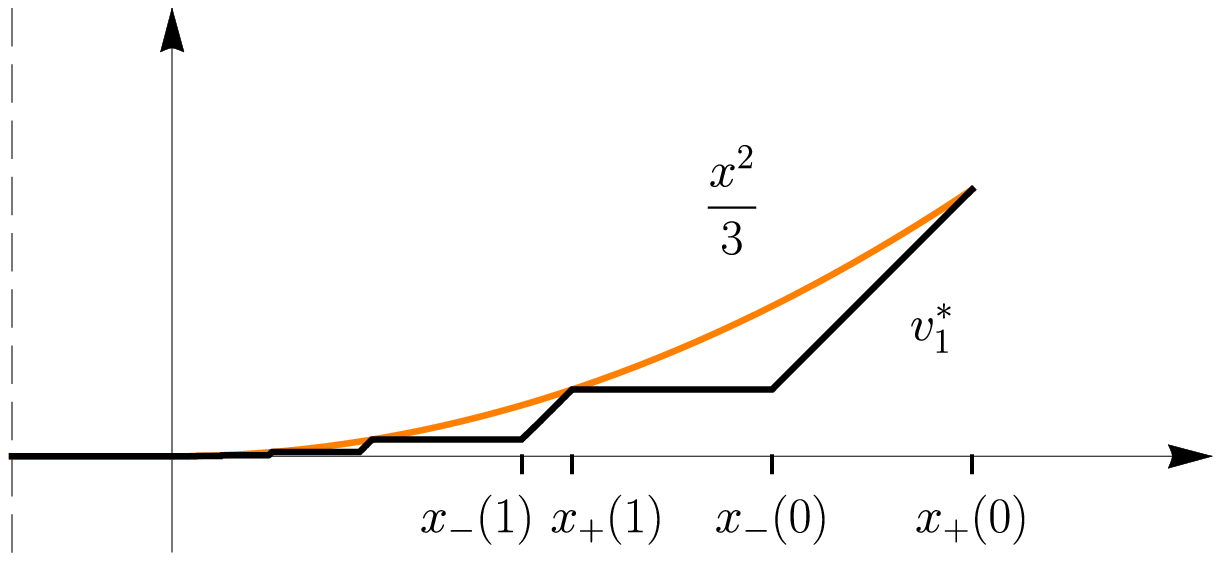}}
				\caption{(a) Example of a non-stable weight $\wcrit$ for which $\mathbbm{1}_{(x_0,\ap)} \notin H^{2,2}_{\mu_\wcrit}$ despite $x_0 \in \cIcr{1,2}(\wcrit)$; (b) a function $v^*_1 \in L^2(I)$ satisfying $\Jc{I}(v^*_1) < \infty$ and its upper bound.}
				\label{fig:wcrit}       
			\end{figure}
			
			Firstly we verify that $x_0 =0 \in \cIcr{1,p}(\wcrit)$: for $k \in \mathbb{N}$ we put $\eps_k := 1/2^k$ and compute that
			\begin{alignat}{1}
			\nonumber
			\int\limits_{I \cap B(x_0,\eps_k)} \! \biggl(\frac{\abs{x-x_0}}{(\wcrit(x))^{1/p}}\biggr)^{p'} \! dx \geq \! \int\limits_{\Ione \cap B(x_0,\eps_k)} \! \frac{x^{p'}}{x^{p'+2}} \ dx &= \sum_{n=k}^\infty -\frac{1}{x}\,\bigg\rvert_{x_-(n)}^{x_+(n)}\\
			\label{eq:est_crit}
			&= \sum_{n=k}^\infty \frac{2^n}{2^{n+2}-1} = \infty
			\end{alignat}
			for any $k \in \mathbb{N}$, therefore, according to definition \eqref{eq:def_Icrap} of critical set, $x_0\in\cIcr{1,p}$ indeed holds. It can be checked that this would be no longer true if $\gamma$ was replaced with $\tilde{\gamma} =p(1+1/p') = 2 p - 1$, whereas $x_0 \in \cIcr{1,p}(w_{\tilde{\gamma}})$ for $w_{\tilde{\gamma}}(x) = \abs{x}^{\tilde{\gamma}}$ in the whole $I$, see Example \nolinebreak \ref{ex:wa} and characterization \eqref{eq:Icra_wa}. Our conclusion is that we had to increase degeneracy in $\Ione$ in order to make up for fast shrinking of this set in proximity of $x_0$.
			
			Our goal is to show that $\Pro^*_\Sigma < \infty$ for $\wcrit$, we recall that $m = 2$. It is enough to point to functions $v^*_0,v^*_1 \in L^p(\Ue)$ satisfying $D v^*_1 +v^*_0 = \mathbbm{1}_\Ue$ for which $\Jc{\Ue}(v^*_0) + \Jc{\Ue}(v^*_1) \linebreak[1]< \infty$ and we may do so only for $\eps = 1$ that gives $\Ue = B(x_0,1) = I$. We note that $\Pro_{m-1}$ is surely infinite by the equivalence (iv) $\Leftrightarrow$ (v) which tells us we will not succeed by choosing either $v^*_0=0$ or $v^*_1=0$. Surely we need the two functions to smartly complement each other around the point $x_0 =0$. We propose for a.e. $x \in I$
			\begin{equation}
			v^*_0(x) := \mathbbm{1}_{I \backslash \Ione}(x), \qquad v^*_1(x) := \int_{-1}^x \mathbbm{1}_{\Ione}(y) \, dy;
			\end{equation}
			the function $v^*_1$ is illustrated in Fig. \ref{fig:wcrit}(b). It is straightforward to check that indeed  $D v^*_1 + \nolinebreak v^*_0 = \nolinebreak \mathbbm{1}_I$; in addition we easily compute
			\begin{equation*}
			\Jc{I}(v^*_0) = \frac{1}{2} \int_I \biggl(\frac{\abs{v^*_0}}{\wcrit^{1/p}} \biggr)^{p'} dx = \frac{1}{2} \int_{I\backslash \Ione} \frac{1}{1} \ dx < \infty.
			\end{equation*}
			It is thus left to show that $\Jc{I}(v^*_1) < \infty$. The continuous function $v^*_1$ satisfies a crucial estimate that has essentially predetermined the form of set $\Ione$: 
			\begin{equation}
			\label{eq:v_leq_g}
			\abs{v^*_1(x)} \leq g(x):= \frac{x^2}{3} \qquad  \text{for a.e. } x \in I.
			\end{equation}
			Indeed, we check that for every $k\in \mathbb{N}$
			\begin{equation}
			\label{eq:v_leq_g_arg}
			v^*_1\bigl(x_+(k)\bigr) = \int_{0}^{x_+(k)} \mathbbm{1}_{\Ione}(y) \, dy = \sum_{n=k}^{\infty} \abs{x_+(n)-x_-(n)} = \frac{1}{4} \sum_{n=k}^{\infty} \frac{1}{4^n} = \frac{1}{3} \left( \frac{1}{2^k} \right)^2
			\end{equation}
			hence $v^*_1\bigl(x_+(k)\bigr) = g\bigl(x_+(k)\bigr)$ for any natural $k$. Moreover $D g(x) < 1$ for any $x \in I$, thus $v^*_1(x) \leq g(x)$ for $x \in \bigl( x_-(k), x_+(k) \big)$ where $D v^*_1$ is a.e. equal to $1$. For the interval $\bigl( x_+(k+1), x_-(k) \big)$, on the other hand, $v^*_1$ is constantly equal to $g\bigl(x_+(k+1)\bigr)$. This holds for every $k$ and the inequality  $\abs{v^*_1(x)} \leq x^2/3$ is therefore obtained, see also the visual comparison in Fig. \ref{fig:wcrit}(b). Since $p'>1$ for every $p\in \prange$, we may readily check that
			\begin{alignat}{1}
			\label{eq:est_Jc1}
			2 \, \Jc{I}(v^*_1) &= \int_I \biggl(\frac{\abs{v^*_1}}{\wcrit^{1/p}} \biggr)^{p'} dx  \leq \int_I \frac{\abs{x^2/3}^{p'}}{(\wcrit(x))^{p'/p}} \, dx \\ & = \int_{I\backslash \Ione}  \frac{\abs{x^2/3}^{p'}}{1} \, dx + \int_\Ione \frac{\abs{x^2/3}^{p'}}{x^{p'+2}} dx
			=C +\frac{1}{3^{p'}} \int_\Ione x^{p'-2} dx < \infty, \nonumber
			\end{alignat}
			where $C$ is finite. This ultimately proves that $\Pro^*_{\Sigma} < \infty$ and thus also $\Pro_{\Sigma} < \infty$ which rules out the possibility of the step function $\mathbbm{1}_{(x_0,\ap)}$ being an element of $H^{2,p}_{\mu_\wcrit}$ in case when $p>1$. For $p=1$, after careful look at definition \eqref{eq:def_Icra1} for $\cIcr{1,1}(\wcrit)$ and formula \eqref{eq:Jc_form_1} for $\Jc{I}$, we may easily deduce that an analogical argument would check out if we put any $\gamma$ from $(1,2]$ in the definition of $\wcrit$.
			
			We shall outline how the idea above can be modified for orders $m > 2$. The set $\Ione$ remains unaltered, while in the definition of $\wcrit$ we put $\gamma := p\left(m-1 +2/{p'} \right)$. We find that criticality $x_0 \in \cIcr{m-1,p}(\wcrit)$ may be proved by applying precisely the same estimate as in \eqref{eq:est_crit}. We define $v^*_0$ identically as above and for every $k \in \{1,\ldots,m-2\}$ we put $v^*_k=0$, this trivially yields $\Jc{I}(v^*_k)$ for each $k<m-1$. The trick lies in defining
			\begin{equation*}
			v^*_{m-1}(x) := \int_{-1}^x \mathbbm{1}_{\Ione}(y)\frac{(x-y)^{m-2}}{(m-2)!} \, dy
			\end{equation*}
			that, similarly to \eqref{eq:v_leq_g}, furnishes $\abs{v^*_{m-1}(x)} \leq C \abs{x}^m$ for a constant $C \in (0,1)$; the proof of this fact demands slightly more work than in \eqref{eq:v_leq_g_arg} and we decide not to display it herein. An estimate analogical to $\eqref{eq:est_Jc1}$ gives $\Jc{I}(v^*_{m-1}) <\infty$ as well, which eventually disqualifies the function $\mathbbm{1}_{(x_0,1)}$ as an element of $\Hnukp{m}$.
			
			The weight $\wcrit$ that was put forward is clearly not of bounded variation and we now show this can be easily remedied for $p>1$. We redefine the weight function on $I \backslash \Ione$:
			\begin{alignat*}{1}
			\wcrit_{BV}(x) := \left\{
			\begin{array}{cl}
			x^\gamma &\ \ \text{if}\ \ x\in \Ione, \\
			1/\beta^n &\ \ \text{whenever}\ \ x \in \bigl[ x_+(n+1)\, ,\, x_-(n) \bigr) \text{ for some } n\in \mathbb{N}, 
			\end{array}
			\right.
			\end{alignat*}
			where $\gamma$ is defined as before depending on $m$. It is clear that $\wcrit_{BV}$ is in $BV(I)$ for any $\beta>1$. It is straightforward that still $x_0 \in \cIcr{1,p}\bigl( \wcrit_{BV} \bigr)$, since $\wcrit_{BV} \leq \wcrit$. With the functions $v^*_k$ kept as before it is also clear that $\Jc{I}(v^*_k)$ is finite for any $k \in \{1,\ldots,m-1\}$, since the weight did not change on $\Ione$ and all such functions $v^*_k$ are non-negative only there. It remains to compute
			\begin{equation*}
			2 \Jc{I}(v^*_0) =\int_I \biggl(\frac{\abs{v^*_0}}{\wcrit_{BV}^{\,1/p}} \biggr)^{p'} \, dx = \sum_{n=0}^{\infty} \bigl(\beta^n\bigr)^{p'/p} \bigl\lvert x_-(n)-x_+(n+1) \bigr\rvert \leq \sum_{n=0}^{\infty} \biggl( \frac{\beta^{1/(p-1)}}{2} \biggr)^n
			\end{equation*}
			which is finite whenever $\beta^{1/(p-1)} <2$. Recalling that $p>1$, we have arrived at a weight $\wcrit_{BV}$ that for any $\beta \in (1,2^{p-1})$ gives the claim (II) for each $m \geq 2$. The proof concludes now.
		\end{proof}
	\end{theorem}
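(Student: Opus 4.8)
My plan is to establish the displayed chain of implications and equivalences and then treat the two ``moreover'' claims separately, exploiting the fact that (i),(ii),(iii) live on the \emph{primal} side (existence of the step function and of maximising sequences for the energy problems) while (iv),(v),(vi) live on the \emph{dual} side (criticality of $x_0$ and the conjugate problems). The bridge between the two groups is the general convex-duality apparatus from \cite{ekeland1999} already deployed in the proof of Theorem~\ref{thm:dirac_delta_approx_crit_point}, whereas the internal equivalences within each group are either immediate from the definitions or follow from Lemma~\ref{lem:positive_distribution}. Since $w\in L^\infty(\Ue)$ for $\eps\le r$, every duality step is \emph{stable}, so primal and dual values coincide and duality gaps never arise.

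For the forward chain I would argue as follows. The implication (i)$\Rightarrow$(ii) is a rescaled diagonal argument: from a maximising sequence for $\Pro_{\Sigma,+}$ one extracts a non-negative $\hphi_h\in\D\bigl(B(x_0,1/h)\bigr)$ with $\int\hphi_h\,dx=1$ and $\norm{\der{k}{\hphi_h}}_\Lpnu\to0$ for \emph{all} $k\in\{0,\dots,m-1\}$ simultaneously (the summed form of $\Pro_{\Sigma,+}$ is exactly what forces this), after which $\ustep_h(x):=\int_{\am}^x\hphi_h$ converges to $\ustep$ in $\Hnukp{m}$ as in Corollary~\ref{cor:step_function}, but now \emph{without} invoking stability. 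The implication (ii)$\Rightarrow$(iii) begins by showing that every $\tgradnuk{k}\ustep$ with $k\ge1$ vanishes: on the open non-critical sets $\U^{\mathrm{ncr}}_+=\bigl((x_0,\infty)\cap I\bigr)\setminus\cIcr{0,p}(w)$ and $\U^{\mathrm{ncr}}_-=\bigl((-\infty,x_0)\cap I\bigr)\setminus\cIcr{0,p}(w)$ the embedding $\Lpnu\hookrightarrow\Lloc{\cdot}$ yields $\Hnukp{1}\hookrightarrow W^{1,1}_{\mathrm{loc}}$, so $\tgradnuk{}\ustep$ is the classical derivative of a locally constant function, hence zero, and closedness of $\tgradnuk{}$ propagates this upward. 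After splitting into a degenerate case (weight $\muw$-trivial on one side of $x_0$) and the generic case, I would set $\hphi_h:=\varphi_\eps\,\der{}{\ustep_h}$ for a cut-off $\varphi_\eps$ with $\mathrm{spt}(\varphi'_\eps)$ inside the non-critical region and verify $\int\hphi_h\to1$ by integration by parts together with the H\"older estimate of the type \eqref{eq:B_p_estimate}. Then (iii)$\Rightarrow$(iv) is pure duality: $\Pro_\Sigma=\infty$ forces the single-derivative problem $\Pro_{m-1}=\infty$, which by stability equals $\Pro^*_{m-1}$. Finally (iv)$\Leftrightarrow$(v)$\Leftrightarrow$(vi) follows by testing with $v^*_\eps(x)=\eps\,(x-x_0)^{m-1}/(m-1)!$ (whose conjugate energy is infinite \emph{iff} $x_0\in\cIcr{m-1,p}(w)$ by comparing \eqref{eq:Jc_form_p}--\eqref{eq:Jc_form_1} with \eqref{eq:def_Icrap}--\eqref{eq:def_Icra1}) and, conversely, by Lemma~\ref{lem:positive_distribution}, which shows every admissible competitor dominates $\abs{v^*_\eps}$ near $x_0$.

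To prove claim~(I) it remains to close the loop via (vi)$\Rightarrow$(i). For $m=1$ the problems $\Pro_{m-1,+}$ and $\Pro_{\Sigma,+}$ literally coincide, so nothing is needed. For $m>1$ and a \emph{stable} weight, $x_0$ is a one-sided stable critical point, so $w$ degenerates monotonically on the concentration side; this gives the Poincar\'e-type inequality \eqref{eq:poincare} and hence a finite $C$ with $\sum_{k=0}^{m-1}\J{\Ue}(\der{k}{\phi})\le C\,\J{\Ue}(\der{m-1}{\phi})$ for the maximising functions, and a rescaling $\eps\mapsto\min\{\eps,\eps/C\}$ converts a maximiser of $\Pro_{m-1,+}$ into one of $\Pro_{\Sigma,+}$. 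This is precisely where stability is used, and it explains why (I) splits along $m=1$ versus $m>1$.

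The main obstacle is claim~(II). The strategy is first to dualise the summed problems over the product space $Y=\bigl(L^p(\Ue)\bigr)^{m}$, obtaining $\Pro_\Sigma=\Pro^*_\Sigma=\inf\bigl\{\sum_k\Jc{\Ue}(v^*_k):\sum_k D^kv^*_k=\eps\,\mathbbm{1}_{(x_0,\ap)}\bigr\}$, and then to exhibit a finite competitor while keeping $x_0$ critical of order $m-1$, thereby contradicting (iii) and so (ii). I would take $\wcrit$ equal to $\abs{x-x_0}^\gamma$ on a union $\Ione$ of intervals shrinking geometrically toward $x_0$ and equal to a constant elsewhere, with the exponent tuned to $\gamma=p\bigl(m-1+2/p'\bigr)$ so that the length ratios \eqref{eq:theta} still yield $x_0\in\cIcr{m-1,p}(\wcrit)$ yet leave room for a finite dual value. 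The decisive and most delicate step is to choose $v^*_{m-1}(x)=\int_{\am}^x\mathbbm{1}_{\Ione}(y)\,(x-y)^{m-2}/(m-2)!\,dy$ (with the remaining $v^*_k$ either $0$ or the complementary indicator) and to prove the pointwise bound $\abs{v^*_{m-1}(x)}\le C\abs{x-x_0}^m$ with $C<1$; this bound is exactly what dictates the lengths in $\Ione$, and it renders $\Jc{I}(v^*_{m-1})$ finite by comparison with $\int_\Ione\abs{x-x_0}^{p'-2}\,dx<\infty$. A final staircase adjustment, replacing the outer constant by $1/\beta^{n}$ with $\beta\in(1,2^{p-1})$, brings $\wcrit$ into $BV(I)$ for $p>1$ without spoiling either the criticality or the finiteness of the dual energy.
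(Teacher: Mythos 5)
Your proposal follows the paper's own proof essentially step for step: the same chain (i)$\Rightarrow$(ii)$\Rightarrow$(iii)$\Rightarrow$(iv)$\Leftrightarrow$(v)$\Leftrightarrow$(vi), the same use of Lemma \ref{lem:positive_distribution} and the duality machinery of Theorem \ref{thm:dirac_delta_approx_crit_point}, the same Poincar\'e-based closing of the loop for claim (I), and the same counterexample weight (with $\gamma = p(m-1+2/p')$, the geometrically shrinking set $\Ione$, the competitor $v^*_{m-1}$, and the $1/\beta^n$ staircase for the $BV$ refinement) for claim (II). The approach is correct and matches the paper's argument.
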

	
	\begin{remark}
		In the last part of the proof we have constructed a weight $\wcrit$ that furnishes the claim (II), the weight for the case $m=2$ and $p=2$ is displayed in Fig. \ref{fig:wcrit}(b). Our point of departure was the family of weights $w$ satisfying $x_0 \in \cIcr{m-1,p}(w)$ and from it we had to choose a particular weight such that  $\mathbbm{1}_{(x_0,\ap)}$ is not an element of $\Hnukp{m}$. We have delivered this by proposing a weight $\wcrit$ that gives an abstract result $\Pro^*_{\Sigma} < \infty$. In order to provide an intuition behind the choice of $\wcrit$ we recall that finiteness of $\Pro^*_{\Sigma}$ translates by means of duality to $\Pro_{\Sigma} < \infty$. Regardless of $\Pro_{\Sigma}$ and $\Pro^*_{\Sigma}$, due to criticality of $x_0$ we still have $\Pro_{m-1} = \Pro^*_{m-1} =\infty$. In both variation problems $\Pro_{m-1}$ and $\Pro_{\Sigma}$ we seek a sequence $\phi_h \in \D(\Ue)$ with arbitrarily big integral $\int_\Ue \phi_h \,dx$ and arbitrarily small $L^p_{\mu_\wcrit}$-norms: in case of $\Pro_{m-1}$ we only bother with the norm of the highest derivative $\der{m-1}{\phi_h}$, while in $\Pro_{\Sigma}$ we wish to additionally control all the smaller derivatives, including the function $\phi_h$ itself. From now on we concentrate on the case $m=2$ and the explanation can be readily given: we may find a sequence $\phi_h$ with small norm $\norm{\der{}{\phi_h}}_{L^p_{\mu_\wcrit}}$ and we roughly do that by enforcing $\mathrm{supp}\bigl( \der{}{\phi_h} \bigr) \subset \bigl(x_-(h+1), x_+(h+1) \bigr) \ \cup \ \bigl(x_-(h), x_+(h) \bigr)$, one may see Fig. \nolinebreak \ref{fig:Poincare_argument_1} to feel the idea -- this argument explains why $\Pro_{m-1} = \infty$. This way, however, we do not control $\norm{\phi_h}_{L^p_{\mu_\wcrit}}$, since $\phi_h$ may be big on $I\backslash \Ione$ where $\wcrit \equiv 1$. The example with $\wcrit_{BV}$ proves that geometrical decay of the weight in $I \backslash \Ione$ around $x_0$ does not help. One may try to squeeze each function $\phi_h$ into one interval $\bigl(x_-(h), x_+(h) \bigr)$. This would give control over $\norm{\phi_h}_{L^p_{\mu_\wcrit}}$, but the estimate on the dual energy $\Jc{I}$ in \eqref{eq:est_Jc1} states that: (1) those intervals are narrowing too fast with $h$ to control the derivative of $\phi_h$ (see \eqref{eq:theta}); (2) at the same time $x^\gamma$ in $\Ione$ is big enough to capture this by making the norm $\norm{ \der{}{\phi_h}}_{L^p_{\mu_\wcrit}}$ blow up. 
	\end{remark}
	
	\begin{remark}
		\label{rem:pos_phi}
		In this subsection we have encountered pairs of variational problems: $\Pro_{m-1}$, $\Pro_{m-1,+}$ and $\Pro_{\Sigma}$, $\Pro_{\Sigma,+}$ and two pairs of their respective duals. The pairs of primal problems differ only by the constraint on non-negativity of smooth functions $\phi$, which we have needed for proving Corollary \ref{cor:step_function} (and thus for the implication (i) $\Rightarrow$ (ii) above), precisely for the functions $\ustep_h$ to vary between 0 and 1, see \eqref{eq:pointwise_convergence_to_step_fun}. All the forms of variational problems conveniently played a different role in our last proof, although $\Pro_{m-1} = \Pro_{m-1,+}$ are equivalent due to (iv) $\Leftrightarrow$ (vi). The proof that $\Pro_{\Sigma} = \Pro_{\Sigma,+}$ as well (or, equivalently, $\Pro^*_{\Sigma} = \Pro^*_{\Sigma,+}$) seems not so straightforward. Although we have not come up with a counter-example, the idea of the proposed weight $\wcrit$ certainly gives room to try. Since this matter was not crucial for this work we have left it open.
	\end{remark}
	
	\newpage
	
	\section{The trace operators in the weighted Sobolev space}
	
	\label{sec:trace_extension}
	
	In this section we look at the behaviour of functions $u \in \Hnukp{m}$ at the boundary points $\am$ and $\ap$; we agree that for an interval $I = \Iaa$ and an exponent $p \in \prange$ we are given a weight $w\in L^1(I)$ that is stable in accordance to Definition \ref{def:stab_crit_point}. For an arbitrary order $m \geq 1$ we wish to verify whether the $\bk$-th tangential derivative $\tgradnuk{\bk} u$ is well defined at $\am$ and independently at $\ap$, we consider any $\bk$ in $\{0,\ldots,m-1\}$.
	
	For an interior point $x_0 \in I$ the local version of Theorem 4.1, or rather Corollary \ref{cor:higer_order_continuity}, states that all up to order $\bk$ derivatives of $u \in \Hnukp{m}$ (along with the function itself) are continuous at $x_0$ in the sense of the precise a.e. equal representative. Those results, however, were adjusted for $x_0$ being also the one of the boundary points $\am, \ap$ and in this particular scenario we are essentially enabled to attribute a value to the function $u \in \Hnukp{m}$ and its respective derivatives at the boundary point, for example at $x_0 = \am$; the condition is that this point cannot be critical with a suitable order.
	
	Below we show this result rigorously and in order to neatly utilize the embedding $\Hnukp{m} \hookrightarrow W^{\bk+1,1}(\U)$ from Corollary \ref{cor:higer_order_continuity} it is convenient to introduce trace operators on the space of smooth functions that may or may not be continuously extended to $\Hnukp{m}$. For a function $u \in \D(\R)$ we define, at each of the end-points $\am$ and $\ap$ independently, trace operators $\bigl( \Tr{k}{\,\cdot\,} \bigr) (\am), \ \bigl( \Tr{k}{\,\cdot\,} \bigr) (\ap): \D(\R) \rightarrow \R$ of order $k$:
	
	\begin{equation*}
	\vspace{2mm}
	\Tr{k}{u} (\am) := D^k \!u \,(\am), \qquad \Tr{k}{u} (\ap) := D^k\! u\, (\ap).
	\end{equation*}
	
	The next result is a direct consequence of Corollary \ref{cor:higer_order_continuity} and gives a sufficient condition for the trace $\Tr{k}{u} (a_{\, -\diagup+})$ to be well defined for functions $u$ in the weighted Sobolev space \nolinebreak $\Hnukp{m}$:
	\begin{proposition}
		\label{prop:trace_extension}
		For an interval $I = \Iaa \subset \R$, exponent $p \in \prange$ and weight $w \in L^1(I)$ we consider a weighted Sobolev space $\Hnukp{m}$, where $m \in \mathbb{N}_+$. Let us choose $\bk \in \nolinebreak \{0,\ldots,m-\nolinebreak1\}$.
		
		If $\am \! \notin \cIcr{\Dk-1,p}(w)$ with $\Dk= m-\bk$, then for each $k \in \{0,\ldots,\bk\}$ the trace operator
		\begin{equation*}
		\bigl(\Tr{k}{\,\cdot\,} \bigr) (\am): \D(\R) \rightarrow \R
		\end{equation*}
		has a continuous extension to $\Hnukp{m}$. The same independently applies to the operator $\bigl(\Tr{k}{\,\cdot\,} \bigr) (\ap)$ provided $\ap \notin \cIcr{\Dk-1,p}(w)$.
		\begin{proof}
			We assume that $\am \notin \cIcr{\Dk-1,p}(w)$. Then, by putting $x_0 := \am$, we obtain an \linebreak$\bar{I}$\nolinebreak-open set $\V$ in accordance with Corollary \ref{cor:higer_order_continuity}. There must exist $\delta>0$ such that the open interval $I_1:=(\am,\am\!+\delta)$ is contained in $\V \cap I$. Then by \eqref{eq:embedding_higher} we obtain an embedding $\Hnukp{m} \hookrightarrow W^{\bk+1,1}(I_1)$. It is well-established that there exists a continuous extension of $\bigl(\Tr{k}{\,\cdot\,} \bigr) (\am)$ to $W^{\bk+1,1}(I_1)$ for each $k \leq \bk$ and so there is one to $\Hnukp{m}$. The proof for $\ap$ is analogical.
		\end{proof}
	\end{proposition}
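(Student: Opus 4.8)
The plan is to reduce the claim to the one–dimensional embedding of Corollary \ref{cor:higer_order_continuity} combined with the elementary fact that a function of class $W^{\bk+1,1}$ on an interval has derivatives up to order $\bk$ that are continuous up to the endpoints. Since $\Hnukp{m}$ is by construction the completion of $\D(\R)$, a continuous linear functional defined on this dense subspace admits a unique continuous extension to the whole space; hence it suffices to prove that, for each $k\le\bk$, the operator $\bigl(\Tr{k}{\,\cdot\,}\bigr)(\am)$ is bounded on $\D(\R)$ with respect to the norm $\norm{\,\cdot\,}_{\Hnukp{m}}$.

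First I would apply Corollary \ref{cor:higer_order_continuity} with $x_0:=\am$. The hypothesis $\am\notin\cIcr{\Dk-1,p}(w)$, where $\Dk=m-\bk$, delivers an $\bar I$-open neighbourhood $\V\ni\am$ together with the embedding \eqref{eq:embedding_higher}, namely $\Hnukp{m}\hookrightarrow W^{\bk+1,1}(\V\cap I)$. Because $\am$ is the left end-point of $I=\Iaa$, the open set $\V\cap I$ contains a genuine one-sided interval $I_1:=(\am,\am+\delta)$ for some $\delta>0$; restricting the embedding to $I_1$ yields a constant $C_1<\infty$ with $\norm{u}_{W^{\bk+1,1}(I_1)}\le C_1\,\norm{u}_{\Hnukp{m}}$ for all $u\in\Hnukp{m}$.

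Next I would invoke the standard one-dimensional Sobolev embedding $W^{\bk+1,1}(I_1)\hookrightarrow C^{\bk}\bigl(\overline{I_1}\bigr)$: every $u\in W^{\bk+1,1}(I_1)$ has a representative whose derivatives $D^k u$, $k\le\bk$, extend continuously to the closed interval $\overline{I_1}=[\am,\am+\delta]$, with $\sup_{x\in\overline{I_1}}\abs{D^k u(x)}\le C_2\,\norm{u}_{W^{\bk+1,1}(I_1)}$ for some $C_2<\infty$. For a smooth $u\in\D(\R)$ the precise representative coincides with $u$, so evaluating at $\am$ and composing the two estimates gives
\begin{equation*}
\abs{\bigl(\Tr{k}{u}\bigr)(\am)}=\abs{D^k u(\am)}\le C_1\,C_2\,\norm{u}_{\Hnukp{m}}\qquad\text{for every }u\in\D(\R),\ k\le\bk,
\end{equation*}
which is precisely the required boundedness and hence the continuous extension to $\Hnukp{m}$.

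The only genuine care needed is in the bookkeeping at the boundary: the neighbourhood supplied by Corollary \ref{cor:higer_order_continuity} is merely $\bar I$-open, so I must verify explicitly that $\V\cap I$ contains a nondegenerate interval abutting $\am$ from the right, and that the classical trace on $W^{\bk+1,1}(I_1)$ is continuous with $I_1$ closed at its left end. I must also check that the a priori trace $D^k u(\am)$ of a smooth function agrees with the endpoint value of the continuous representative $\breve u\in C^{\bk}(\V\cap I)$ furnished by the corollary, so that the extended functional indeed computes the boundary derivative rather than some unrelated limit. These points are routine but are exactly where the argument would silently fail if the one-sidedness were not handled. The statement for $\ap$ follows by the mirror argument, shrinking $\V\cap I$ to $(\ap-\delta,\ap)$ and evaluating the trace at the right end-point.
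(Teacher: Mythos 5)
Your proposal is correct and follows essentially the same route as the paper: invoke Corollary \ref{cor:higer_order_continuity} at $x_0=\am$ to get the embedding $\Hnukp{m}\hookrightarrow W^{\bk+1,1}(I_1)$ on a one-sided interval $I_1=(\am,\am+\delta)$, then use the classical continuity of the trace on $W^{\bk+1,1}(I_1)$. The extra care you flag about density, the $\bar I$-openness of $\V$, and the agreement of the smooth trace with the continuous representative is sound but is exactly what the paper's shorter argument implicitly relies on.
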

	
	The condition $\am \! \notin \cIcr{\Dk-1,p}(w)$ appears above as sufficient to extend the trace operator $\bigl(\Tr{\bk}{\,\cdot\,} \bigr) (\am)$ of order $\bk$. We will find that for stable weights it is also the necessary one. The intuition may be readily found in Corollary \ref{cor:step_function} where for, in  fact not a boundary, but an internal point $x_0 \in \cIcr{\Dk-1,p}(w)$ we obtain functions $\ustep \in \Hnukp{m}$ admitting jump-type discontinuity of $\tgradnuk{\bk}\ustep$ at $x_0$. The base Theorem \ref{thm:dirac_delta_approx_crit_point}, however, was formulated for arbitrary $x_0 \in \bar{I}$, including boundary points and thus can be adapted to prove the mentioned necessity. We will incorporate this result below in a theorem that enables to approximate any function $u \in \Hnukp{m}$ by smooth functions $u_h \in \D(\R)$ that have pre-fixed (independent of $h$) boundary values of functions/derivatives $\der{k}{u_h}$ for any natural $k\geq 0$. The proposed version of the statement is tailored for the other work of the same author \cite{bolbotowski2020a} where the weighted Sobolev functions are considered on graphs in $\R^d$: the proposition will serve as lemma while constructing an approximating sequence of $\bar{u}_h \in \D(\Rd)$ which requires gluing edge-wise defined functions $u^i_h \in \D(E_i)$ at junctions, $E_i$ denoting the $i$-th edge.
	
	\begin{theorem}
		\label{thm:trace_approximation}
		For an interval $I= \Iaa$, an exponent $p \in \prange$, a stable weight $w \in \nolinebreak L^1(I)$ and order $m \in \mathbb{N}_+$ we consider a function in a weighted Sobolev space $u \in \nolinebreak \Hnukp{m}$ and a boundary point $\am$. By $\Dk \geq 1$ we denote the smallest positive integer such that
		\begin{equation*}
		\am \notin \cIcr{\Dk -1,p}(w)
		\end{equation*}
		or we put $\Dk = \infty$ whenever $\am \in \cIcr{\al,p}(w)$ for any $\al\geq 0$.
		
		We set $\bk = \max\{m-\Dk,-1\} \in \{-1,0,\ldots,m-1\} $ and choose any $m$ numbers $v^-_0,\ldots,v^-_{m-1} \in \R$ that satisfy:
		\begin{enumerate}[(i)]
			\item for indices $k$ such that $0 \leq k \leq \bk$ (such $k$ may not exist)
			\begin{equation}
			\label{eq:Tr_vk}
			\Tr{k}{u}(\am) = v^-_k;
			\end{equation}
			\item for indices $k$ such that $\bk+1 \leq k \leq m-1$ (such $k$ may not exist)
			\begin{equation}
			v^-_k \text{ is an arbitrary real}.
			\end{equation}
		\end{enumerate}
		Then for any $\eps >0$ there exists a smooth function $u_\eps \in \D(\R)$ and $\delta = \delta(\eps)>0$ such that
		\begin{equation}
		\label{eq:u_ueps}
		\norm{u-u_\eps}_\Hnukp{m} < \eps
		\end{equation}
		while
		\begin{alignat}{3}
		\label{eq:low_Dk}
		\der{k}{u_\eps}(\am) &= v^-_k \qquad &&\text{for } k\in\{0,\ldots,m-1\},\\
		\label{eq:high_Dk}
		\der{m}{u_\eps}(x) &= 0 \qquad &&\text{for every } x\in B(\am,\delta)\cap I.
		\end{alignat}
		The analogical fact can be independently put forward for the other boundary point $\ap$ and boundary values $v^+_0, \ldots , v^+_{m-1}$ (the index $\bk$ may differ). Moreover $u_\eps$ satisfying \eqref{eq:u_ueps} may be chosen such that \eqref{eq:low_Dk}, \eqref{eq:high_Dk} hold for $\am$ and $\ap$ altogether.
		
		\begin{remark}
			Prior to giving a proof of the theorem we will explain its content. The integer $\bk$ should be seen as the index of the highest tangential derivative $\der{\bk}{u}$ that is well-defined at $\am$ or, equivalently, the highest order for which the trace operator $\Tr{\bk}{(\, \cdot \,)}(\am)$ has a continuous extension to $\Hnukp{m}$; the case when $\bk = -1$ simply means that there is no such order.  For $k \leq \bk$, due to continuity of $\Tr{k}{(\, \cdot \,)}(\am)$ the boundary value $v_k = \der{k}{u_\eps}(\am)$ cannot be different from a suitable trace if we wish that $u_\eps$ approximate $u$ arbitrarily close. To the contrary, if $k \geq \bk+1$ we are able to produce any boundary value $\der{k}{u_\eps}(\am)$ with the $\Hnukp{m}$-norm-cost as small as we would like. Our choice in the theorem is governed by the application in \cite{bolbotowski2020a} where we require the function $u_\eps$ to be a particular polynomial of degree $m-1$ in some neighbourhood of the boundary point $\am$. 
		\end{remark}
		\begin{proof}
			We will first display the proof only for the boundary point $\am$. For a given $\delta>0$ by definition of $\Hnukp{m}$ we may choose a smooth function $\ut \in \D(\R)$ such that
			\begin{equation}
			\label{eq:u-ut}
			\norm{u-\ut}_\Hnukp{m} < \delta.
			\end{equation}
			We put for every $k \in \{0,\ldots,m-1\}$
			\begin{equation}
			\label{eq:Dvk}
			\Dv_k := v^-_k -\der{k} \ut (\am) \in \R.
			\end{equation}
			We shall construct the target function $u_\eps$ by modifying the function $\ut$ in three steps below; C \nolinebreak will denote a constant that may change from line to line.
			
			\vspace{2mm}
			\noindent \underline{Step I (only if $ \bk \geq 0 $):} 
			\vspace{1mm}
			
			\noindent For every $k \in \{0,\ldots,\bk\}$ due to \eqref{eq:Tr_vk} we have
			\begin{equation*}
			\Dv_k = \Tr{k}{u}(\am) - \Tr{k}{\ut}(\am).
			\end{equation*}
			and by continuity of $\Tr{k}{(\,\cdot\,)}(\am)$ in $\Hnukp{m}$ for $k\leq \bk$ (guaranteed by Proposition \ref{prop:trace_extension}) we obtain for a finite constant $C>0$ independent of $\delta$ and $k \leq \bk$
			\begin{equation}
			\label{eq:dvk_Cdelta}
			\abs{\Dv_k} \leq C \norm{u - \ut}_\Hnukp{m} < C \, \delta.
			\end{equation}
			We introduce the first modification $\ut_\mathrm{I} := \ut + \Delta \ut_\mathrm{I} \in C^\infty(\R)$ where for any $x \in \R$
			\begin{equation*}
			\Delta \ut_\mathrm{I} (x) := \sum_{n=0}^{\bk} \frac{\Dv_n}{n!} (x-\am)^n
			\end{equation*}
			which gives
			\begin{alignat}{3}
			\nonumber
			&\der{k}{\ut_\mathrm{I}}(\am) = v^-_k \qquad &&\text{for each } k \in \{0,\ldots,\bk \},\\
			\label{eq:vk_after_I}
			&\der{k}{\ut_\mathrm{I}}(x) = \der{k}{\ut}(x) \qquad &&\text{for all } x\in \R \text{ and each } k \geq \bk+1.
			\end{alignat}
			It is easy to verify that the following estimate holds for each $k \leq \bk$ based on \eqref{eq:dvk_Cdelta}:
			\begin{equation}
			\label{eq:DuI_Lpnu_delta}
			\norm{\der{k}{\Delta \ut_\mathrm{I}}}_\Lpnu \leq  \sum_{n=k}^{\bk} \frac{\abs{I}^{n-k}}{(n-k)!} \norm{w}^{1/p}_{L^1(I)} \abs{\Dv_n} < C(k) \, \delta,
			\end{equation}
			where $\abs{I} = \Leb^1(I) = \abs{\ap-\am}$ and the finite positive constant $C(k)$ is independent of $\delta$. For $k \geq \bk+1$ we obviously have $\der{k}{\Delta \ut_\mathrm{I}} \equiv 0$. By combining \eqref{eq:u-ut} and \eqref{eq:DuI_Lpnu_delta} above we infer that $\delta$ may be chosen so that
			\begin{equation}
			\label{eq:}
			\norm{u-\ut_\mathrm{I}}_\Hnukp{m} \leq \norm{u-\ut}_\Hnukp{m} + \norm{\Delta \ut_\mathrm{I}}_\Hnukp{m} < \delta + C \delta < \eps/3
			\end{equation} 
			In the steps to follow we agree that such $\ut_\mathrm{I}$ is fixed, while $\delta$ remains live.
			
			\vspace{2mm}
			\noindent \underline{Step II (only if $\bk \leq m-2$):} 
			\vspace{1mm}
			
			\noindent By the definition of $\bk$ we have $\am \in \cIcr{m-\bk-2}$ and hence, for any $\delta>0$ Theorem \ref{thm:dirac_delta_approx_crit_point} furnishes a function $\hphi$ satisfying (we assume that $\delta\leq \abs{I}$)
			\begin{equation}
			\label{eq:hphi_fixed}
			\hphi \in \D \bigl( \Br(\am,\delta) \bigr), \qquad \int_{\Br(\am,\delta)} \hphi\, dx = 1, \qquad \norm{\der{m-\bk-2}{\hphi}}_\Lpnu < \delta.
			\end{equation}
			Since the weight $w$ is assumed to be stable we may choose the function $\hphi$ above such that also $\norm{\der{k}{\hphi}}_\Lpnu < \delta$ for any other $k \in \{0,\ldots,m-\bk-2\}$, see \eqref{eq:poincare_deriv}, \eqref{eq:poincare} in the proof of Corollary \ref{cor:step_function}.
			
			We introduce a smooth approximation $\tilde{s} \in C^\infty(\R)$ of step function $\mathbbm{1}_{(-\infty,\am)}$ that for every $x\in\R$ reads
			\begin{equation*}
			\tilde{s}(x) = 1 - \int_{-\infty}^{x} \hphi(y) \, dy;
			\end{equation*}
			by \eqref{eq:hphi_fixed} we notice that
			\begin{equation}
			\label{eq:prop_tildes}
			0 \leq \tilde{s} \leq 1, \quad \mathrm{supp} (\tilde{s}) \cap I \subset \Br(\am,\delta), \quad \tilde{s} \equiv 1 \text{ in } \Br(\am,\delta_1) \text{ for some } 0<\delta_1< \delta. 
			\end{equation}
			We define the next modification $\ut_\mathrm{II} := \ut_\mathrm{I} +  \Delta \ut_\mathrm{II} \in C^\infty(\R)$ where
			\begin{equation*}
			\Delta \ut_\mathrm{II}(x) :=  \sum_{n=\bk+1}^{m-1} \Dv_n \cdot
			\left\{
			\begin{array}{cl}
			\int_{\am}^{x} \tilde{s}(y) \frac{(x-y)^{n-1}}{(n-1)!} dy &\ \ \mathrm{if}\ \ n \geq 1, \\
			\tilde{s}(x) &\ \ \mathrm{if}\ \ n=0.
			\end{array}
			\right.
			\end{equation*}
			According to \eqref{eq:prop_tildes} we have $\Delta \ut_\mathrm{II}(x) = \sum_{n=\bk+1}^{m-1} \frac{\Dv_n}{n!} (x-\am)^n$ for any $x \in \Br(\am,\delta_1)$. After acknowledging \eqref{eq:Dvk} and \eqref{eq:vk_after_I} we thus obtain
			\begin{alignat}{3}
			\nonumber
			&\der{k}{\ut_\mathrm{II}}(\am) = v^-_k \qquad &&\text{for each } k \in \{0,\ldots,m-1 \},\\
			\label{eq:vk_after_II}
			&\der{m}{\ut_\mathrm{II}}(x) = \der{m}{\ut}(x) \qquad &&\text{for all } x\in \Br(\am,\delta_1).
			\end{alignat}
			We may compute the $m$-th derivative $\der{m}{\bigl(\Delta \ut_\mathrm{II}\bigr)} = - \sum_{n=\bk+1}^{m-1} \Dv_n \,\der{m-n-1}{\hphi} $. In order to estimate the $\Hnukp{m}$-norm of $\Delta \ut_\mathrm{II}$ we recognize it together with its all $m$ derivatives as linear combinations with coefficients $\Dv_k$ of functions whose $\Lpnu$-norms we estimate below:
			\begin{alignat}{3}
			\label{eq:est_hphi}
			&\norm{\der{k}{\hphi}}_\Lpnu < \delta \qquad &&\text{ for each } k\in \{0,\ldots, m-\bk-2 \},\\
			\label{eq:est_s}
			&\norm{\tilde{s}}_\Lpnu = \biggl( \int_I w \, \abs{\tilde{s}}^p dx \biggr)^{1/p} &&\leq \biggl(\int_{\Br(\am,\delta)} w \,dx \biggr)^{1/p},\\
			\label{eq:est_int_s}
			&\left\lVert\int_{\am}^{\,\cdot\,} \tilde{s}(y) \frac{(\,\cdot\, - y)^{n-1}}{(n-1)!} dy \right \rVert_\Lpnu &&\leq C \cdot \norm{w}^{1/p}_{L^1(I)} \cdot \delta  \quad \text{for any interger } n\geq 1.
			\end{alignat} 
			where $C$ depends only on $n$ and the length $\abs{I}$. The first inequalities were forced by the choice of $\hphi$, while the second one is clear upon acknowledging \eqref{eq:prop_tildes}. The last estimate needs explaining: due to \eqref{eq:prop_tildes} for all $x \in I$  we observe that  $\abs{\int_{\am}^{x} \tilde{s}(y) \frac{(x-y)^{n-1}}{(n-1)!} dy} \leq  \abs{\int_{\am}^{\ap} \tilde{s}(y) \frac{(\ap-y)^{n-1}}{(n-1)!} dy} \leq \abs{\int_{\am}^{\am+\delta} \frac{(\ap-y)^{n-1}}{(n-1)!} dy} = \frac{1}{n!} \abs{(\ap-\am)^n -(\ap-\am-\delta)^n}$, which, upon acknowledging that $\ap-\am =\abs{I}$ and $\delta < \abs{I}$, yields inequality \eqref{eq:est_int_s}.
			
			The estimates \eqref{eq:est_hphi}, \eqref{eq:est_int_s} and \eqref{eq:est_s} together with absolute continuity of Lebesgue integral allow to choose $\delta>0$ such that $\norm{\Delta \ut_\mathrm{II}}_\Hnukp{m} < \eps/3$ and thus
			\begin{equation*}
			\norm{u - \ut_\mathrm{II}}_\Hnukp{m} \leq \norm{u - \ut_\mathrm{I}}_\Hnukp{m}+ \norm{\Delta \ut_\mathrm{II}}_\Hnukp{m} < \frac{\eps}{3}+\frac{\eps}{3} = \frac{2\eps}{3}.
			\end{equation*}
			We assume that $\ut_\mathrm{II}$ satisfying above is fixed and again $\delta>0$ will be arbitrary below. Notwithstanding this we bear in mind the $\delta_1$ that appears in \eqref{eq:vk_after_II}.
			
			\vspace{2mm}
			\noindent \underline{Step III:} 
			\vspace{1mm}
			
			\noindent Let $\delta$ be an  arbitrary positive number smaller than $\abs{I}/2$. We choose a smooth cut-off function  $\varphi_\delta$ satisfying
			\begin{equation*}
			\varphi_\delta \in \D\bigl( B(\am,2\delta) \bigr), \qquad 0\leq \varphi_\delta \leq 1, \qquad \varphi_\delta \equiv 1 \quad \text{in} \quad B(\am,\delta)
			\end{equation*}
			and then we define the third modification $\ut_\mathrm{III} := \ut_\mathrm{II} +  \Delta \ut_\mathrm{III} \in C^\infty(\R)$ where
			\begin{equation*}
			\Delta \ut_\mathrm{III}(x) := -\int_{\am}^{x} \bigl(\varphi_\delta \cdot\der{m}{\ut} \bigr)(y) \ \frac{(x-y)^{m-1}}{(m-1)!} dy.
			\end{equation*}
			We note that for $k \leq m-1$ we obtain $\der{k}{\Delta \ut_\mathrm{III}}(\am) = 0$; moreover, we have $\der{m}{\Delta \ut_\mathrm{III}} = - \der{m}{\ut}$ in  $\Br(\am,\delta)$, hence, whenever $\delta \leq \delta_1$ (see \eqref{eq:vk_after_II}) we ultimately arrive at
			\begin{alignat}{3}
			\nonumber
			&\der{k}{\ut_\mathrm{III}}(\am) = v^-_k \qquad &&\text{for each } k \in \{0,\ldots,m-1 \},\\
			\label{eq:vk_after_III}
			&\der{m}{\ut_\mathrm{III}}(x) = 0 \qquad &&\text{for all } x\in \Br(\am,\delta).
			\end{alignat}
			Similarly as in \eqref{eq:est_int_s} we estimate for $k \in \{0,\ldots,m-1\}$
			\begin{equation*}
			\norm{\der{k}{\Delta \ut_\mathrm{III}}}_\infty \leq \norm{\der{m}{\ut}}_\infty \, \abs{\int_{\am}^{2\delta} \ \frac{(\ap-y)^{m-1-k}}{(m-1-k)!} dy} \leq \norm{\der{m}{\ut}}_\infty \, C(k) \, \delta 
			\end{equation*}
			where, aside from $k$, the constant $C(k)$ depends only on $m$ and $\abs{I}$. For the highest derivative we observe that $\der{m}{\Delta \ut_\mathrm{III}} = - \varphi_\delta \der{m}{\ut}$. We can readily write down  the inequality:
			\begin{alignat*}{2}
			\norm{\Delta \ut_\mathrm{III}}_\Hnukp{m} &\leq \norm{\der{m}{\Delta \ut_\mathrm{III}}}_\Lpnu +  \sum_{k = 0}^{m-1} \norm{\der{k}{\Delta \ut_\mathrm{III}}}_\Lpnu\\
			& \leq \norm{\der{m}{\ut}}_\infty \, \biggl(\int_{\Br(\am,2\delta)} w \,dx \biggr)^{1/p} + \sum_{k = 0}^{m-1} \norm{\der{m}{\ut}}_\infty\, \norm{w}^{1/p}_{L^1(I)}\, C(k)\, \delta,
			\end{alignat*}
			and so, recalling absolute continuity of Lebesgue integral once more, there exists $\delta>0$ such that $\norm{\Delta \ut_\mathrm{III}}_\Hnukp{m} < \eps/3$ and eventually
			\begin{equation*}
			\norm{u - \ut_\mathrm{III}}_\Hnukp{m} \leq \norm{u - \ut_\mathrm{II}}_\Hnukp{m}+ \norm{\Delta \ut_\mathrm{III}}_\Hnukp{m} < \frac{2\eps}{3}+\frac{\eps}{3} = \eps
			\end{equation*}
			which ends Step III.
			
			The proof for the boundary point $\am$ is thus almost finished. At this point we choose another smooth cut-off function $\varphi$, for instance $\varphi \in \D\bigl( B(\am,\abs{I}/2) \bigr)$ with $0 \leq \varphi \leq 1$ and $\varphi \equiv 1$ in $B(\am,\abs{I}/4))$. We define
			\begin{equation*}
			u_\eps := \ut + \varphi \cdot \left( \Delta \ut_\mathrm{I}+\Delta \ut_\mathrm{II}+\Delta \ut_\mathrm{III} \right).
			\end{equation*}
			If we assume that $\delta$ chosen as above is smaller than $\abs{I}$ then $u_\eps$ clearly satisfies \eqref{eq:low_Dk},\eqref{eq:high_Dk} as in the thesis, since $\ut_\mathrm{III}$ did so. The estimates on $\norm{\Delta \ut_\mathrm{I}}_\Hnukp{m}$, $\norm{\Delta \ut_\mathrm{II}}_\Hnukp{m}$ and $\norm{\Delta \ut_\mathrm{III}}_\Hnukp{m}$ will also hold for $\norm{\varphi\cdot \Delta \ut_\mathrm{I}}_\Hnukp{m}$, $\norm{\varphi\cdot\Delta \ut_\mathrm{II}}_\Hnukp{m}$ and $\norm{\varphi\cdot\Delta \ut_\mathrm{III}}_\Hnukp{m}$ since $\varphi$ is fixed (independent of $\delta$) but possibly with higher constants. Therefore, in order to guarantee $\norm{u-u_\eps}_\Hnukp{m} < \eps$ we might have to pick a smaller $\delta$, the proof for $\am$ is nevertheless complete.
			
			The important property of the function $u_\eps$ constructed above is that it did not affect the boundary values of the initial approximation $\ut$. Hence, by redefining $\ut$ as $u_\eps$ we may go through the same steps I,II,III together with cutting-off by $\varphi\in \D\bigl( B(\ap,\abs{I}/2) \bigr)$ for the other boundary point $\ap$. The outcome will be the new function $u_\eps$ that satisfies $\norm{u- \nolinebreak u_\eps}_\Hnukp{m} < 2 \eps$ and the boundary conditions \eqref{eq:low_Dk},\eqref{eq:high_Dk} at both boundary points $\am$ and $\am$ with values $v^-_k$ and $v^+_k$ respectively. The proof is at an end. 
		\end{proof}
	\end{theorem}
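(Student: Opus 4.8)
The plan is to start from a smooth function close to $u$ and repair it in three successive stages, each perturbing the approximation by an amount controlled in $\Hnukp{m}$-norm while progressively installing the prescribed boundary data at $\am$. First I would use the definition of $\Hnukp{m}$ as a completion to fix $\ut \in \D(\R)$ with $\norm{u - \ut}_{\Hnukp{m}} < \delta$, where $\delta$ is a small parameter to be chosen at the very end; the boundary derivatives $\der{k}{\ut}(\am)$ will generically miss their targets, so I record the defects $\Dv_k := v^-_k - \der{k}{\ut}(\am)$ for $k \in \{0,\ldots,m-1\}$. The entire task is then to add to $\ut$ a correction whose $k$-th derivative at $\am$ equals $\Dv_k$ for every $k$, whose $m$-th derivative cancels that of $\ut$ on a small half-ball $\Br(\am,\delta)$, and whose $\Hnukp{m}$-norm is $O(\delta)$.

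For the low orders $k \leq \bk$ the correction is an ordinary polynomial. Here the point is Proposition \ref{prop:trace_extension}: since $\am \notin \cIcr{\Dk-1,p}(w)$ and $\bk = m-\Dk$, every trace $\Tr{k}{\,\cdot\,}(\am)$ with $k \leq \bk$ extends continuously to $\Hnukp{m}$, so that $\abs{\Dv_k} = \abs{\Tr{k}{u}(\am) - \Tr{k}{\ut}(\am)} \leq C\,\norm{u-\ut}_{\Hnukp{m}} < C\delta$ is itself small. Adding $\sum_{n=0}^{\bk} \frac{\Dv_n}{n!}(x-\am)^n$ fixes the derivatives $0,\ldots,\bk$ at $\am$ exactly, leaves all higher derivatives of $\ut$ unchanged, and — because $w \in L^1(I)$ makes each $\norm{(x-\am)^n}_{\Lpnu}$ finite while the coefficients are $O(\delta)$ — costs only $O(\delta)$ in norm. (When $\bk=-1$ this stage is simply vacuous.)

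The crux, and the step I expect to be the main obstacle, is supplying the high-order defects $\Dv_{\bk+1},\ldots,\Dv_{m-1}$, which need \emph{not} be small, at arbitrarily small $\Hnukp{m}$-cost. This is precisely where criticality is spent: by minimality in the definition of $\Dk$ one has $\am \in \cIcr{m-\bk-2,p}(w)$, so Theorem \ref{thm:dirac_delta_approx_crit_point} (right-sided, as $\am$ is the left endpoint) delivers $\hphi \in \D\bigl(\Br(\am,\delta)\bigr)$ with $\hphi \geq 0$, $\int_I \hphi\,dx = 1$ and $\norm{\der{m-\bk-2}{\hphi}}_{\Lpnu}$ as small as desired. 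Stability of $w$ then upgrades this, through the Poincar\'e-type inequality \eqref{eq:poincare} already obtained in the proof of Corollary \ref{cor:step_function}, to $\norm{\der{n}{\hphi}}_{\Lpnu} < \delta$ for \emph{every} $n \in \{0,\ldots,m-\bk-2\}$ at once; this is exactly the reason stability cannot be dropped. Writing $\tilde{s}(x) := 1 - \int_{-\infty}^{x} \hphi(y)\,dy$ for the resulting smooth one-sided step, which is $\equiv 1$ on a sub-half-ball $\Br(\am,\delta_1)$ and supported in $\Br(\am,\delta)$, the iterated integrals $\int_{\am}^{x} \tilde{s}(y)\frac{(x-y)^{n-1}}{(n-1)!}\,dy$ reduce to $\frac{(x-\am)^n}{n!}$ near $\am$ (yielding the required $n$-th derivative $\Dv_n$), while their $m$-th derivative equals $-\der{m-n-1}{\hphi}$, whose order $m-n-1 \leq m-\bk-2$ guarantees smallness in $\Lpnu$. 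A $\Dv_n$-weighted sum of these terms corrects all high-order data with norm $O(\delta)$.

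Finally I would annihilate the top derivative near the endpoint without spoiling what has been achieved. With the approximation now matching all boundary data $0,\ldots,m-1$, choose a cutoff $\varphi_\delta \in \D\bigl(B(\am,2\delta)\bigr)$ equal to $1$ on $B(\am,\delta)$ and subtract $\int_{\am}^{x} \bigl(\varphi_\delta\,\der{m}{\ut}\bigr)(y)\frac{(x-y)^{m-1}}{(m-1)!}\,dy$: this has vanishing derivatives of all orders $\leq m-1$ at $\am$ (so it does not disturb the matched values), forces $\der{m}{u_\eps} = 0$ on $\Br(\am,\delta)$, and has norm of order $\bigl(\int_{\Br(\am,2\delta)} w\,dx\bigr)^{1/p}$, hence small by absolute continuity of the integral. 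Multiplying the accumulated correction by a fixed cutoff supported near $\am$ renders $u_\eps$ compactly supported without altering the endpoint behaviour; since this correction vanishes near $\ap$, the identical three-stage construction runs independently at $\ap$, and a final sufficiently small choice of $\delta$ secures $\norm{u-u_\eps}_{\Hnukp{m}} < \eps$ together with \eqref{eq:low_Dk} and \eqref{eq:high_Dk} at both endpoints.
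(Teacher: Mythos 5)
Your proposal is correct and follows essentially the same route as the paper's own proof: the same three-stage correction (polynomial fix of the low-order traces controlled via Proposition \ref{prop:trace_extension}, high-order fix via the Dirac-approximating $\hphi$ from Theorem \ref{thm:dirac_delta_approx_crit_point} upgraded by stability and the Poincar\'e inequality \eqref{eq:poincare}, then annihilation of $\der{m}{u_\eps}$ near $\am$ by subtracting the $m$-fold integral of $\varphi_\delta\,\der{m}{\ut}$), followed by the fixed cutoff and the independent repetition at $\ap$. No gaps to report.
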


	\section{Application to variational problems on the example of linear elasticity in beams}
	\label{sec:conclusions}
	
	We conclude this work by revisiting the topic that was essentially the motivation for the whole contribution -- the second-order variational problem of linear elasticity in beam with a degenerate distribution of width $w$. In this section we repeatedly make use of results herein derived, we show that the developed theory of weighted Sobolev spaces $\Hnukp{m}$ allows to successfully relax the original variational problem formulated for smooth displacement functions $u \in \D(I)$ and, thanks to one-dimensional setting, also to construct the solutions. We will be additionally required to examine coercivity of the underlying energy functional; on top of that we shall characterize the space dual to $\Hnukp{m}$. For clarity we dare not to mind physical units in the process. 
	
	Our beam will occupy an interval $I =\Iaa$ with $\am =0$ and $\ap = 4$; we also distinct three internal points $x_1 =1, x_2=2, x_3=3$. For a given stable weight $w \in L^1(I)$ we consider an elliptic Dirichlet boundary problem of second order in its variational form:
	\begin{equation}
	\label{eq:initial_dirichlet_var}
	\inf \biggl\{ J_I\bigl(D^2 u\bigr) - \bigl \langle u,f \bigr \rangle \ : \ u \in \D(I) \biggr\}, \quad \text{where} \quad J_I(v) = \frac{1}{2} \int_I w \, \abs{v}^2\, dx
	\end{equation}
	and $f \in \D'(I)$ is a given distribution. With $w$ treated as a width function and $f$ as a loading, we readily recognized the above as the elasticity problem for a clamped (note the homogeneous Dirichlet boundary conditions) beam, while function $u$ that potentially solves the problem will be non other that the deflection function of the beam. For the variational problem to admit a solution we naturally must relax the condition on smoothness of function $u$. In analogy with the works from \cite{bouchitte1997} or \cite{bouchitte2003} our proposition is to use the space $\Hnuktwo$ or, more precisely, to consider a relaxed version of the problem \eqref{eq:initial_dirichlet_var}:
	\begin{equation}
	\label{eq:relaxed_dirichlet_var}
	\inf \biggl\{ J_I\bigl(\tgradnuk{2} u\bigr) - \bigl \langle u,\bar{f} \bigr \rangle \ : \ u \in \U^{2,2}_\muw \biggr\}, \quad \text{where} \quad J_I(v) = \frac{1}{2} \int_I w \, \abs{v}^2\, dx.
	\end{equation}
	By $\U^{2,2}_\muw$ we understand the closure of the space $\D(I)$ in topology of $\Hnuktwo$. The functional $\bar{f}$ stands for the continuous extension of the linear functional $f \in \D'(I)$ to the dual space $\bigl(\Hnuktwo\bigr)^*$; obviously such an extension may not exists, however, it would result in the infimum from \eqref{eq:initial_dirichlet_var} being equal to $-\infty$ and seeking relaxation would be hopeless in the first place. The functional $J_I\bigl( \tgradnuk{2} \, \cdot \, \bigr)$ is convex, lower semi-continuous (and even continuous) in $\Hnuktwo$  -- it is in fact the lower semi-continuous regularization of functional $J_I\bigl(D^2 \, \cdot \, \bigr)$ extended to $\Hnuktwo$ by $+\infty$, the details are moved to \cite{bolbotowski2020a}. For existence of solution in \eqref{eq:relaxed_dirichlet_var} we are left to show coercivity and this matter is more delicate. Naturally, the most we can obtain is coercivity of $J_I\bigl( \tgradnuk{2} \, \cdot \, \bigr)$ in the quotient space $\Hnuktwo / \ker \tgradnuk{2}$. This will be the case if we impose an additional condition on the weight that can be seen as a generalized global Poincar\'{e} inequality:
	\vspace{2mm}
	\begin{equation}
	\label{eq:general_Poincare}
	\text{there exists } C>0 \text{ such that} \quad \norm{u - P_{\,\ker \! \tgradnuk{2}}\! (u)}_{H^{1,2}_\muw} \leq C \, \norm{ \tgradnuk{2} u }_{L^2_\muw} \quad \forall \, u \in \Hnuktwo,
	\end{equation}
	where $P_{\,\ker\! \tgradnuk{2}}$ denotes the orthogonal projection in the Hilbert space $\Hnuktwo$. For $p \in \prange$ the LHS of the inequality can be replaced by the quotient norm in $\Hnukp{2} / \ker \tgradnuk{2}$. The non-triviality of the inequality lies in the structure of $\ker \tgradnuk{2}$ which may be larger than the two-dimensional space of affine functions being the case for $w \equiv c >0$. Corollary \ref{cor:step_function} states that for a.e. positive weights $w$ the subspace $\ker \tgradnuk{2}$ may contain e.g. step functions. Further we could easily check that the above Poincar\'{e} inequality is false for the non-stable weight $\wcrit$ proposed in the proof of Theorem \ref{thm:jump_summary}: by means of duality the theorem guarantees existence of a sequence $u_h \in \D(\R)$ with $u_h \rightarrow \mathbbm{1}_{(x_0,\ap)}$, $\tgradnuk{2} u_h \rightarrow 0$ in $L^2_\muw$, while $\mathbbm{1}_{(x_0,\ap)} \notin \Hnuktwo$ and thus also $\mathbbm{1}_{(x_0,\ap)} \notin \ker \tgradnuk{2}$ (we cannot force the sequence $\tgradnuk{} u_h$ to converge to zero in $L^2_\muw$). We expect the inequality \eqref{eq:general_Poincare} to hold for any stable weight yet the proof seems to be difficult in full generality. In this work we limit ourselves to show validity of the inequality in the case when interval $I$ can be partitioned into finite number of intervals on which $w$ is monotonic -- a simple argument will be demonstrated on the example of weight showed in Fig. \ref{fig:solution_for_beams}(b). 
	
	Since $\Hnuktwo$ is a Hilbert space (reflexivity for other $p \in (1,\infty)$ suffices), then once the Poincar\'{e} inequality \eqref{eq:general_Poincare} is established the relaxed elasticity problem \eqref{eq:relaxed_dirichlet_var} has a solution $\check{u}$ as soon as the continuous extension $\bar{f} \in \bigl( \Hnuktwo \bigr)^*$ of distribution $f \in \D(\R)$ exists and $\bar{f} \perp \U^{2,2}_{0,\muw}:= \ker \tgradnuk{2} \cap \, \U^{2,2}_\muw$. Should solution $\check{u}$ exist, it is unique up to a zero-energetic displacement function $u_0 \in \U^{2,2}_{0,\muw}$.
	
	A few words on the dual space to $\Hnukp{m}$ are in order. To provide its characterization we may simply repeat the argument from Chapter 3 in the book of \cite{adams2003} that is intended for standard Sobolev spaces. The result is that a functional $\Lambda$ is an element of $\bigl( \Hnukp{m} \bigr)^*$ for $p \in \prange$ if and only if it is expressed by a formula $\Lambda u = \sum_{k=0}^{m} \int_I w \, \bigl(\tgradnuk{k}u\bigr)\, v_k^* \,dx$ for some (not necessarily unique) family of functions $v_0^*,v_1^*,\ldots,v_m^* \in L^{p'}_\muw$ where $p'$ is the H\"{o}lder conjugate exponent of $p$. From H\"{o}lder inequality we may easily show that for any $v^* \in L^{p'}_\muw$ there holds $w \, v^* \in L^1(I)$. We immediately infer that a distribution $f \in \D(\R)$ extends to $\bar{f} \in \bigl( \Hnukp{m} \bigr)^*$ if and only if it is of the form $f = \sum_{k=0}^{m} (-1)^k D^k(w \, v_k^*)$ where $D^k$ stands for $k$-th distributional derivative. 
	
	We will construct solutions $\check{u} \in \Hnuktwo$ of the relaxed problem \eqref{eq:relaxed_dirichlet_var} for four cases (a)-(d) of the width/weight $w$. The widths are shown in Fig. \ref{fig:solution_for_beams}(a)-(d) respectively, the reader should consider the picture of $w$ as a view of the beam from the top. Except for the case (a), the width $w$ will vary in each of the four subintervals $(\am,x_1), (x_1,x_2), (x_2,x_3)$ and $(x_3,\ap)$ as the function $\abs{x-\am}^{\gamma_1}$, $\abs{x-x_2}^{\gamma_2}$,$\abs{x-x_2}^{\gamma_3}$ and $\abs{x-\ap}^{\gamma_4}$ for different $\gamma_i \geq 0$, i.e. $\am,x_2,\ap$ are the points of possible degeneration of the weight at different rates. The form of the distribution $f\in \D'(I)$ will be common for all the cases and it shall read:
	\begin{equation*}
	f= \sum_{i=1}^{4} f_{q_i} + \sum_{j=1}^{3} f_{F_j} + f_m, \quad f_{q_i} = \int_{i-1}^i q_i (\,\cdot\,)dx, \quad f_{F_j} = F_j \delta_{x_j}, \quad f_m = - m D(\delta_{x_2}) 
	\end{equation*} 
	where, for the time being, $q_i, F_j, m$ are arbitrary reals. The whole load/distribution $f$ is illustrated in Fig. \ref{fig:solution_for_beams}(a) in a schematic view typical for structural mechanics: distributions $f_{q_i}$ appear as a piece-wise uniformly distributed downward load, each $f_{F_j}$ plays a role of a downward point force, whereas $f_m$ represents a point moment load that rotates the centre of the beam. We note that in the beam theory it is typical to assign a positive sign to loads that are pointed downwards instead of upwards; the same convention applies to the displacement function $u$, i.e. a point $x$ with $u(x)>0$ translates downwards. We shall see that in the first two cases (a), (b) the distribution $f$ will receive a continuous extension $\bar{f} \in \bigl(\Hnuktwo \bigr)^*$ for any parameters $q_i, F_j, m$, whereas for stronger degeneration in case (c) and then (d) subsequent components of $f$ will have to vanish as otherwise they would be unbounded on $\Hnuktwo$.
	
	We briefly describe the method of solving the relaxed variational problem \eqref{eq:relaxed_dirichlet_var}. The variational problem factually solved will be the dual to $\eqref{eq:relaxed_dirichlet_var}$: we will seek a bending moment function $\check{M}$ that solves
	\begin{equation}
	\label{eq:dual_var_beam}
	\inf\biggl\{ \Jc{I}(M) : M \in L^1(I),\ D^2 M + f = 0 \biggr\}, \quad \text{where} \quad \Jc{I}(M) = \frac{1}{2}\int_I \frac{\abs{M}^2}{w} \, dx.
	\end{equation}
	This problem is easy due to one-dimensional setting where $D^2$ has a finite-dimensional kernel that consists of affine functions -- the solution exists as far as the problem is not trivially equal to $+\infty$. The link to the primal problem \eqref{eq:relaxed_dirichlet_var} leads through the optimality condition: if for a function $\check{u} \in \U^{2,2}_\muw$ a constitutive law $\check{M} = w \, \bigl(-\tgradnuk{2} \check{u} \bigr)$ holds for the solution $\check{M}$ of the dual problem, then $\check{u}$ is a solution of the primal problem \eqref{eq:relaxed_dirichlet_var}. Details of this duality-theoretic part of constructing the solution may be found in work \cite{bolbotowski2020a} that is specifically dedicated to the beam/grillage problem. Therefore, upon obtaining $\check{M}$ we actually arrive at the tangential derivative $\tgradnuk{2} \check{u}$ that must be carefully twice integrated so that factually $\check{u} \in \U^{2,2}_\muw$.
	\begin{figure}[!]
		\centering
		\subfloat[]{\includegraphics*[width=0.47\textwidth]{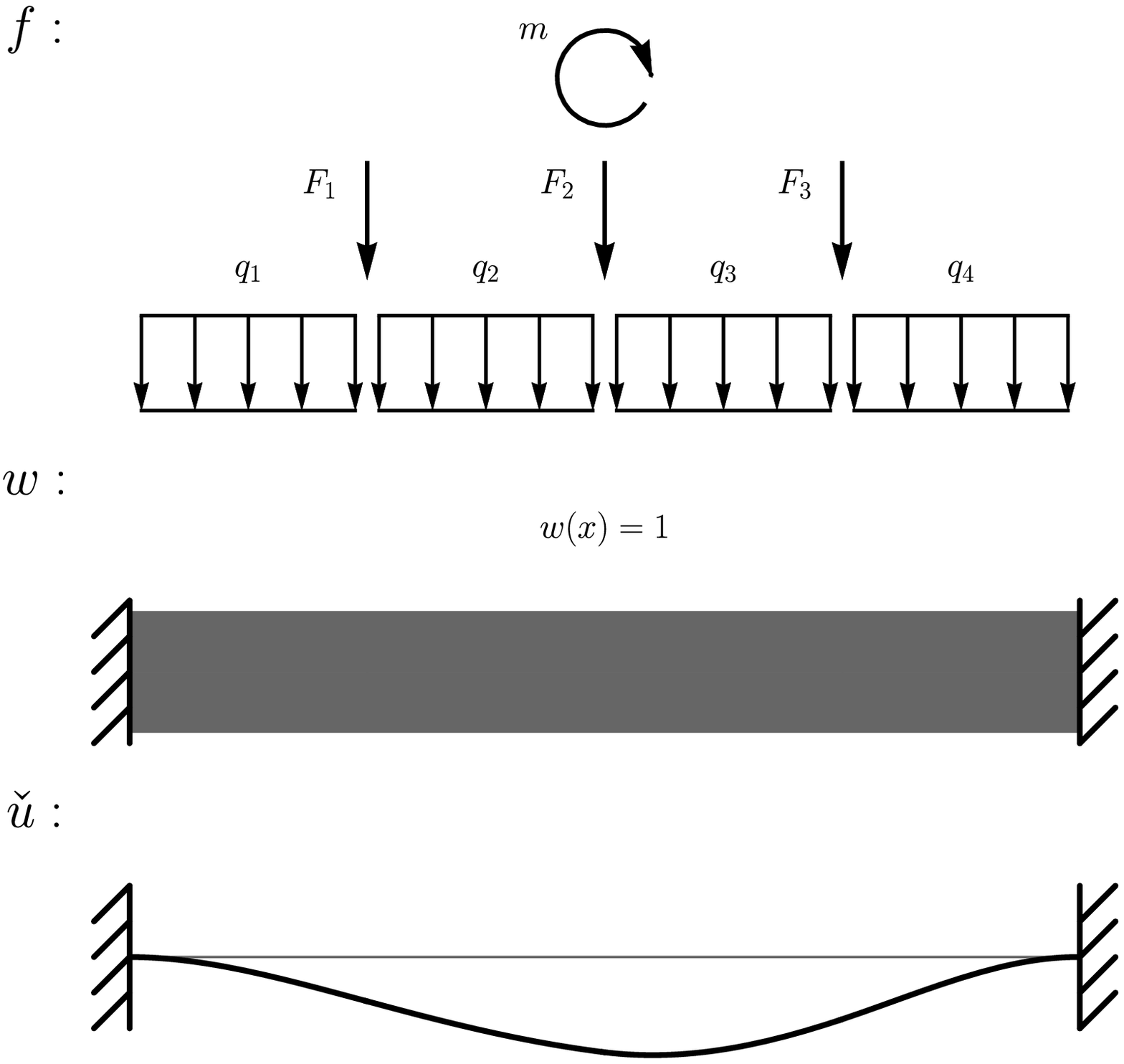}}\hspace{0.5cm}
		\subfloat[]{\includegraphics*[width=0.47\textwidth]{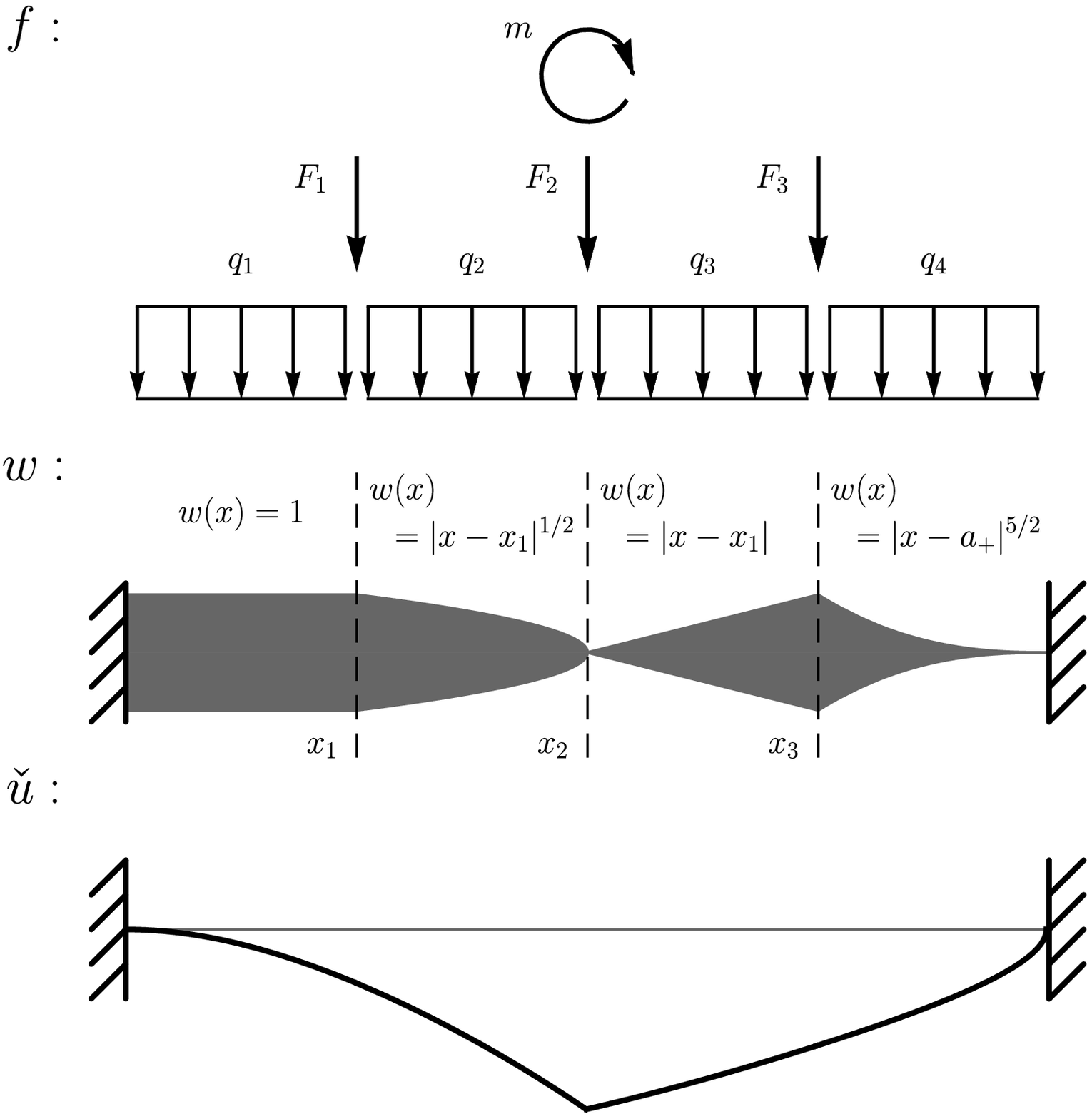}}\\
		\subfloat[]{\includegraphics*[width=0.47\textwidth]{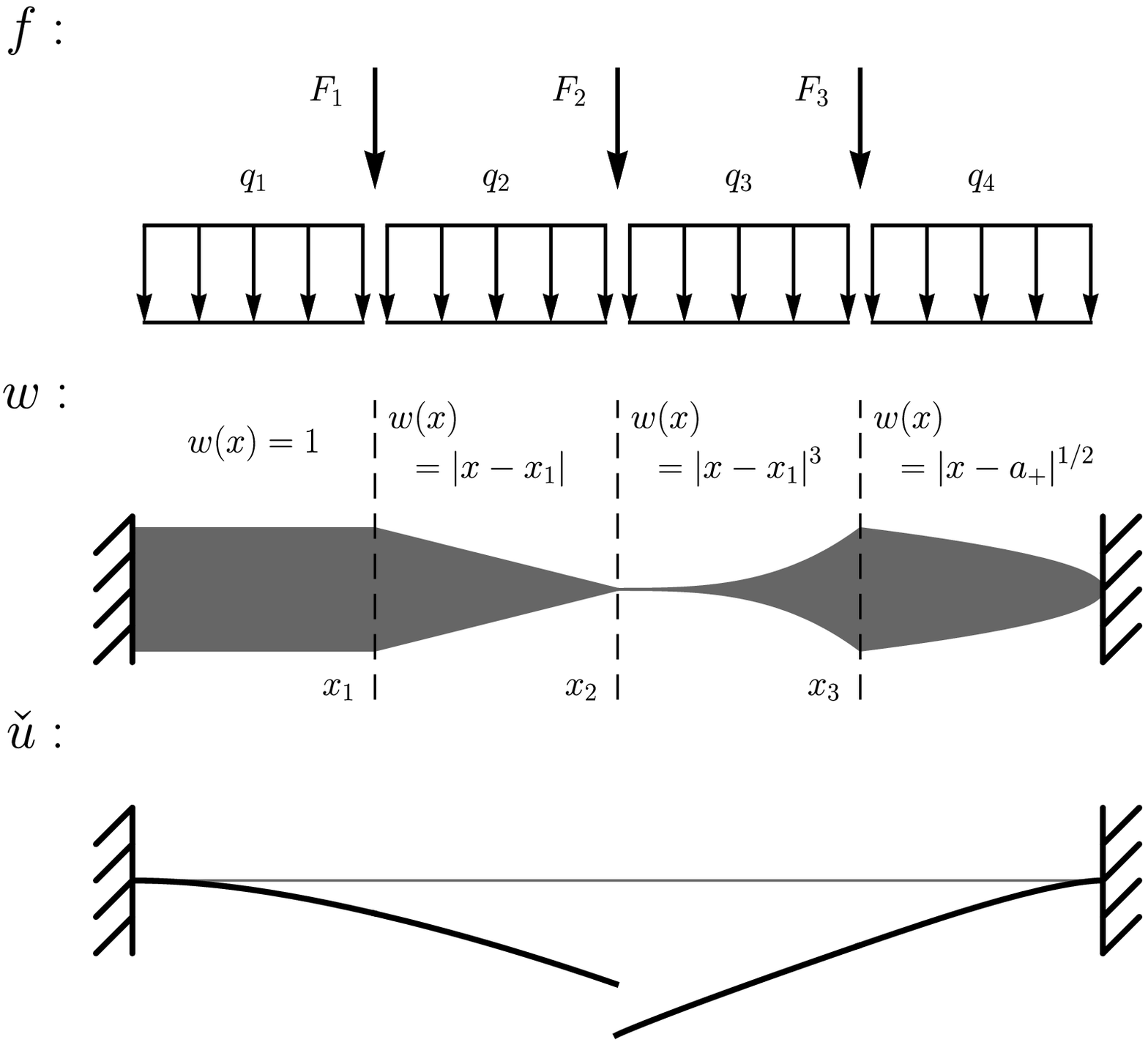}}\hspace{0.5cm}
		\subfloat[]{\includegraphics*[width=0.47\textwidth]{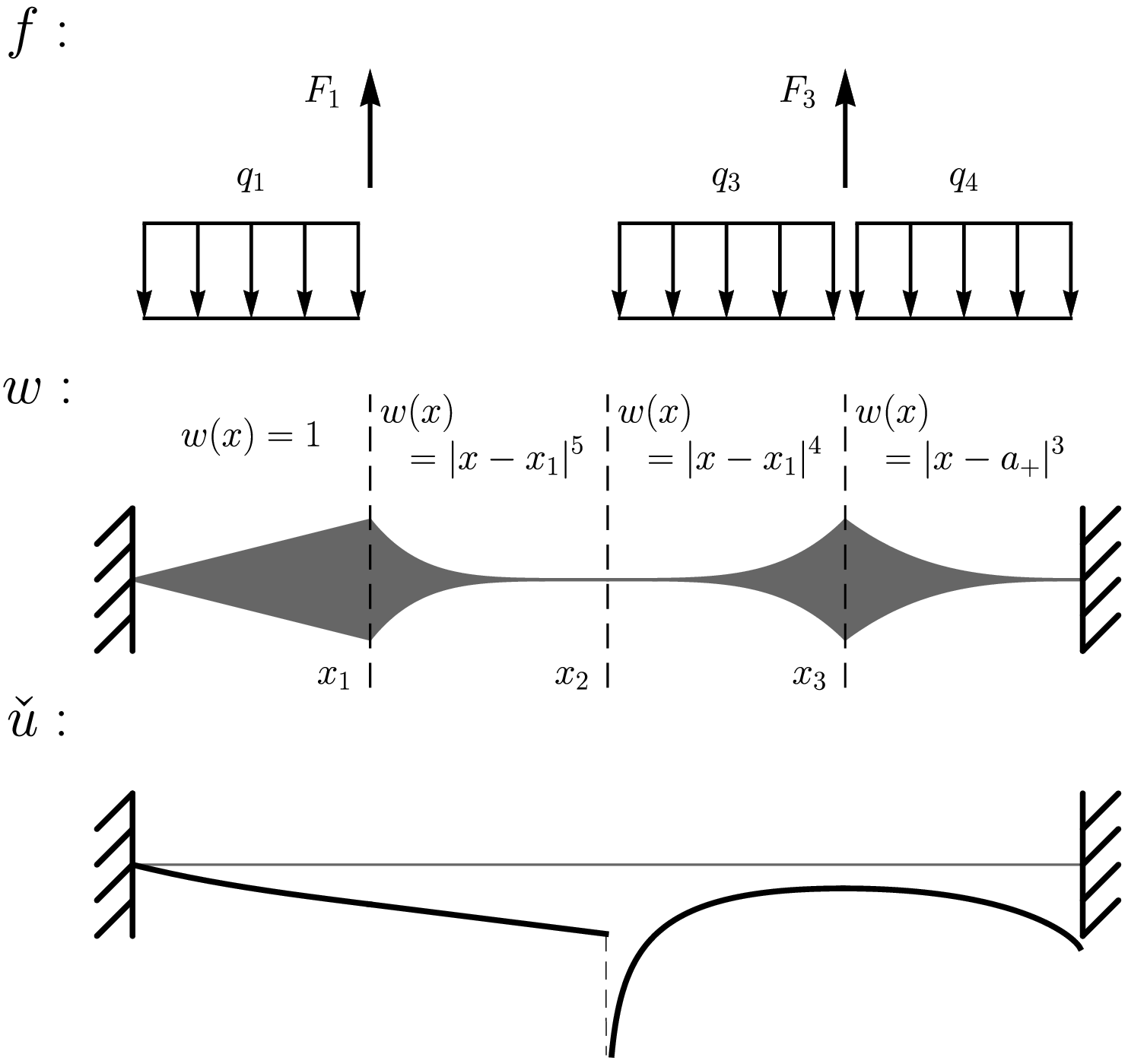}}\\
		
		\caption{Linear elasticity problem in a clamped beam subject to load $f$ and of different width functions $w$. In each case (a)-(d) a solution $\check{u} \in \Hnuktwo$ of the variational problem \eqref{eq:relaxed_dirichlet_var} is shown at the bottom.}
		\label{fig:solution_for_beams}
	\end{figure}
	
	We are ready to construct the solutions $\check{u}$ of the elasticity problem \eqref{eq:relaxed_dirichlet_var} for different widths/weights $w$, starting from the non-degenerated case and ending at the weight suffering most degeneration:
	\vspace{3mm}

	\noindent \underline{Case (a):} The uniform weight $w \equiv 1$ easily yields an equality $\Hnuktwo = H^{2,2}(I)$ which is isomorphic to the classical Sobolev space defined by weak derivatives $W^{2,2}(I)$. Accordingly, the space $\U^{2,2}_\muw$, being the closure of $\D(I)$ in $\Hnuktwo$, is isomorphic to the space $W^{2,2}_0(I)$. In turn every element $u \in \U^{2,2}_\muw$ is a $C^1$ function with zero boundary values $u(\am)=u(\ap)=0$ and $D u(\am)=D u(\ap)=0$. Since the distribution $f$ considered above is of first order it receives its natural extension $\bar{f}$ to $\bigl(\Hnuktwo\bigr)^*$ for any parameters $q_i, F_j, m$; we decide to choose $q_i = 1$ for all $i$, $F_1 = 1$, $F_2 = 2$, $F_3=3$ and $m=2$.
	
	The dual problem is a two-dimensional convex programming problem that is easily solved: having the piece-wise parabolic solution $\check{M}$ we put $\check{v}_2 := -\check{M} / w$ and we ultimately find the unique solution $\check{u}(x) = \int_\am^x \check{v}_2(y)\,(x-y) \,dy$. The minimality of $\check{M}$ guarantees that at the right end the values $\check{u}(\ap), D \check{u}(\ap)$ are indeed zero, hence $\check{u} \in \U^{2,2}_\muw$. The function $\check{u}$ is shown at the bottom of Fig. \ref{fig:solution_for_beams}(a), we stress that positive values of $\check{u}$ are drawn below the $x$-axis.
	\vspace{3mm}
	
	\noindent \underline{Case (b):} Here $x_2$ and $\ap$ are candidates for the critical points. We verify that $x_2$ is a right-sided critical point of zero order, i.e. $x_2 \in \cIcrr{0,2}(w)$; it is not, however, a left-sided critical point. Indeed, we compute for positive $\eps\leq 1$ that $\int_{\Br(x_2,\eps)} 1/w \,dx = \int_{\Br(x_2,\eps)} 1/\abs{x-x_2} \,dx = \infty$, whilst $\int_{\Bl(x_2,\eps)} 1/w \,dx = \int_{\Bl(x_2,\eps)} 1/\abs{x-x_2}^{1/2} \,dx < \nolinebreak \infty$. Similarly $\ap \in \cIcrl{0,2}(w)$, yet non of the two points is a critical point of first order, for instance $\int_{\Bl(\ap,\eps)} \abs{x-\ap}^2/w(x) \, dx = \int_{\Bl(\ap,\eps)} \abs{x-\ap}^2/\abs{x-\ap}^{5/2} \, dx < \infty$.
	
	In summary we have a stable weight $w$ with $\cIcr{0,2}(w) = \{x_2,\ap\}$ and $\cIcr{1,2}(w) = \nolinebreak \varnothing$; we utilize those informations through results from Sections \ref{sec:sufficient_conditions}, \ref{sec:necessary_conditions} and \ref{sec:trace_extension} in order to characterize, and essentially to find, a solution $\check{u} \in \Hnuktwo$ of the variational problem \eqref{eq:relaxed_dirichlet_var}. Firstly, Corollary \ref{cor:higer_order_continuity} from Section \ref{sec:sufficient_conditions} states that $\cIcr{1,2}(w) = \varnothing$ implies the embedding $\Hnuktwo \hookrightarrow W^{1,1}(I)$, that is, in particular, every function $u \in \Hnuktwo$ is absolutely continuous in $I$. Further, due to $\cIcr{0,2}(w) = \{x_2,\ap\}$ the same corollary furnishes that restrictions of each $u \in \Hnuktwo$ to intervals $(\am,x_2)$ and $(x_2,\ap)$ are in Sobolev spaces $W^{2,1}_\mathrm{loc}\bigl( (\am,x_2) \bigr)$ and $W^{2,1}_\mathrm{loc}\bigl( (x_2,\ap) \bigr)$ respectively. In fact, since $x_2 \notin \cIcrl{0,2}(w)$, for the first interval $I_1 := (\am,x_2)$ we can say more by skipping locality: by restricting $\muw$ to $\mu_{w,1}:= \muw \mres I_1$ we see that $\Hnuktwo \hookrightarrow H^{2,2}_{\mu_{w,1}} \hookrightarrow W^{2,2}(I_1)$ and it follows from the fact that $\bar{I}_{1,cr}^{\,0,2}\bigl(w\vert_{I_1} \bigr) = \varnothing$.
	
	On the contrary, due to $x_2 \in \cIcr{0,2}(w)$, Corollary \ref{cor:step_function} in Section \ref{sec:necessary_conditions} states that there exists a function $\ustep \in \Hnuktwo$ such that $\tgradnuk{} \ustep = \mathbbm{1}_{(x_2,\ap)}$ and $\tgradnuk{2} \ustep = 0$ in $\Lpnu$; the explicit formula reads $\ustep(x)=\int_{\am}^{x} \mathbbm{1}_{(x_2,\ap)}(y)\, dy$. In the context of our variational problem this renders $\ustep$ as an element of $\ker \tgradnuk{2}$. We can in fact write 
	\begin{equation}
	\label{eq:ker_Dmuw_b}
	\ker \tgradnuk{2} = \left\{ u_P + \beta \, \ustep : u_P \in \mathrm{P}^1,\ \beta \in \R \right\}
	\end{equation}
	where by $\mathrm{P}^k$ we see the space of polynomials of $k$-th degree. Indeed, $\ker \tgradnuk{2}$ does not contain other functions since the embeddings $\Hnuktwo \hookrightarrow W^{2,1}_\mathrm{loc}\bigl( (\am,x_2) \bigr)$ and $\Hnuktwo \hookrightarrow W^{2,1}_\mathrm{loc}\bigl( (x_2,\ap) \bigr)$ imply that a function $u \in \Hnuktwo$ with $\tgradnuk{2}u \equiv 0$ is affine separately on both intervals $(\am,x_2)$ and $(x_2,\ap)$ and thus the space given in \eqref{eq:ker_Dmuw_b} could only be missing a function $\mathbbm{1}_{(x_2,\ap)}$ which is not in $\Hnuktwo$ due to $x_2\notin \cIcr{1,2}(w)$. Eventually we see that $\dim \bigl( \ker \tgradnuk{2} \bigr) = 3$ and so, here degeneracy of $w$ at $x_2$ increases the dimension by one with respect to Case (a).
	
	Now that the space $\ker \tgradnuk{2}$ was established we may pass to check validity of the Poincar\'{e} inequality \eqref{eq:general_Poincare}. We shall base our argument on the fact that interval $I$ splits to finite number of subintervals where $w$ is monotonic, which also applies to the other Cases (c), (d). Let $u_h \in \D(\R)$ be any sequence of smooth functions such that $\norm{u_h}_{H^{1,2}_\muw} = 1$ for any $h$ whereas $\norm{\tgradnuk{2} u_h}_{L^2_\muw} \rightarrow 0$. The Poincar\'{e} inequality will be true if we manage to show that such a sequence $u_h$ must converge to an element $u_0 \in \ker \tgradnuk{2}$ in norm topology of $H^{1,2}_\muw$ or equivalently of $\Hnuktwo$. Since $u_h$ is bounded in a reflexive space $\Hnuktwo$ we may choose (without relabelling) a subsequence that converges weakly to some function $u_0$ in $\Hnuktwo$. Then $u_0$ must lie in $\ker \tgradnuk{2}$ due to $\norm{\tgradnuk{2} u_h}_{L^2_\muw} \rightarrow 0$. According to \eqref{eq:ker_Dmuw_b} the function $u_0$ is affine on each of the aforementioned subintervals. We shall first show the strong convergence $v_h := \tgradnuk{} u_h \rightarrow \tgradnuk{} u_0$ in $L^2_\muw$; we concentrate on the one of the subintervals, let us choose e.g. $(x_3,\ap)$ where $w$ is decreasing. We obtain that $v_h \rightharpoonup c$ in $L^2_{\muw \mres (x_3,\ap)}$ where $c$ is a constant function and in particular $\int_{(x_3,\ap)} w(v_h - c)\, dx \rightarrow \nolinebreak 0$. Since for any $\tilde{x} \in (x_3,\ap)$ the weight $w$ is separated from zero in $ (x_3,\tilde{x})$, we have the embedding $H^{1,2}_\muw \hookrightarrow W^{1,2}\bigl( (x_3,\tilde{x}) \bigr)$ and, from standard Poincar\'{e} inequality, we easily infer that $\norm{\tgradnuk{} v_h }_{L^2_\muw} = \norm{\tgradnuk{2} u_h}_{L^2_\muw} \rightarrow 0$ implies $v_h(x_3) \rightarrow c$. Next we utilize the monotonicity of $w$ by employing the same trick as in \eqref{eq:poincare_deriv} and we obtain that for every $x \in (x_3,\ap)$ there holds $w(x)\abs{v_h(x)-v_h(x_3)}^2 \leq C \int_{(x_3,\ap)} w(y) \abs{\tgradnuk{}v_h(y)}^2 dy$ with $C = \abs{x_3-\ap}$. We deduce that
	\begin{alignat*}{1}
	&\norm{(v_h - \tgradnuk{}u_0)\lvert_{(x_3,\ap)}}_{L^2_\muw} \\
	\leq\ &\biggl(\int_{(x_3,\ap)} w(x)\abs{v_h(x) - v_h(x_3)}^2 dx\biggr)^{1/2}
	+ \biggl(\int_{(x_3,\ap)} w(x)\abs{v_h(x_3) -c}^2 dx\biggr)^{1/2}\\
	\leq\ & C \, \biggl(\int_{(x_3,\ap)} w(y) \abs{\tgradnuk{}v_h(y)}^2 dy \biggr)^{1/2}+\abs{v_h(x_3) -c} \biggl( \int_{(x_3,\ap)} w(x) dx  \biggr)^{1/2}\\ 
	\leq\  &C\  \norm{\tgradnuk{2}u_h}_{L^2_\muw} + \abs{v_h(x_3) -c}\ \norm{w}_{L^1(I)}^{1/2}
	\end{alignat*}
	and we see that the RHS goes to zero. We may repeat the same argument for the rest of the four subintervals and, since the number of those subintervals is finite, we arrive at $\norm{v_h - \tgradnuk{}u_0}_{L^2_\mu} \rightarrow 0$, i.e. $\tgradnuk{} u_h \rightarrow \tgradnuk{} u_0$ in $L^2_\muw$. The proof that also $u_h \rightarrow u_0$ in $L^2_\muw$ follows similarly: assuming that $u_0(x)=c (x-x_3)+d$ for $x\in (x_3,\ap)$ we deduce that $u_h(x_3) \rightarrow d$ and then $w(x)\abs{u_h(x)-u_h(x_3)-c(x-x_3)}^2 \leq C \int_{(x_3,\ap)} w(y) \abs{\tgradnuk{}u_h(y)-c}^2 dy$; the estimate on $\norm{(u_h - u_0)\lvert_{(x_3,\ap)}}_{L^2_\muw}$ is carried out analogically to the one above. The Poincar\'{e} inequality is now validated. 
	
	Section \ref{sec:trace_extension} allows us to describe the boundary conditions, i.e. to characterize the space $\U^{2,2}_\muw$. Since $\am \notin \cIcr{0,2}$ and $\ap\in \cIcr{0,2}(w)$ but $\ap \notin \cIcr{1,2}(w)$, Proposition \ref{prop:trace_extension} immediately yields that
	\begin{equation}
	\label{eq:boundary_conditions}
	\U^{2,2}_\muw = \left\{ u \in \Hnuktwo \ : \Tr{}{\,u}(\am) =0, \ \Tr{1}{u}(\am) =0,\ \Tr{}{\,u}(\ap) =0 \right\},
	\end{equation}
	namely we have lost one Dirichlet boundary condition that imposed a zero derivative at the right end $\ap$. The three boundary conditions that are still live suffice to eliminate non-zero functions $u_0 \in \ker \tgradnuk{2}$ from the displacement space $\U^{2,2}_\muw$. With the Poincar\'{e} inequality established we have coercivity of $J_I\bigl(\tgradnuk{2} \, \cdot\, \bigr)$ and for solvability of the relaxed elasticity problem \eqref{eq:relaxed_dirichlet_var} we need to only make sure that the load $f \in \D(\R)$ admits its continuous extension $\bar{f} \in \left(\Hnuktwo\right)^*$.
	
	Due to the aforementioned embedding $\Hnuktwo \hookrightarrow W^{1,1}(I) \hookrightarrow C(\bar{I})$ we infer that the zero-order distributions $f_{q_i}$ and $f_{F_j}$ extends naturally, while the first-order distribution $f_m$ needs a closer look. It is supported in $\{x_2\}$ and for a smooth $u \in \D(I)$ gives $\pairing{u,f_m} = m \, D u(x_2)$, meanwhile $x_2$ is the point where the tangential derivative $\tgradnuk{} u$ may jump for $u \in \Hnuktwo$ hence the classical derivative at $x_2$ of such function cannot be well-defined. Notwithstanding this $f_m$ has a continuous extension $\bar{f}_m \in \bigl(\Hnuktwo \bigr)^*$ after all: we observe that $f_m$ can be written as $D^2(w\,v_2^*) = m \, D^2(\mathbbm{1}_{(\am,x_2)})$ where we have put $v^*_2:= m\, \mathbbm{1}_{(\am,x_2)} / w$ (one may easily check that $v^*_2 \in L^2_\muw$) and hence, according to the characterization of dual space $\bigl(\Hnuktwo \bigr)^*$, $f_m$ attains its continuous extension $\bar{f}_m$ given by $\pairing{u,\bar{f}_m} = \int_I w\, \bigl(\tgradnuk{2} u\bigr) \, v^*_2 \, dx = m \int_{(\am,x_2)} \tgradnuk{2} u\, dx$. Another, more elementary explanation for the existence of $\bar{f}_m$ stems from the already established embedding $\Hnuktwo \hookrightarrow W^{2,1}\bigl( (\am,x_2) \bigr)$ that followed from the fact that $x_2 \notin \cIcrl{0,2}(w)$ (in spite of $x_2 \in \cIcr{0,2}(w)$). This way functional $(D \, \cdot \,)(x_2)$ extends continuously to $\Hnuktwo$ in the same way as the functional $(D \, \cdot \,)(\am) =: \Tr{1}{(\, \cdot\,)}(\am)$ does. If for a function $u \in \Hnuktwo$ by $\breve{u}$ we denote its precise representative, the extension $\bar{f}$ may be written as
	\begin{equation*}
	\pairing{u,\bar{f}} = \sum_{i=1}^{4} \int_{i-1}^{i} q_i \,u \, dx + \sum_{j=1}^{3} F_j \, \breve{u}(x_j) + m\   \bigl(D \breve{u}\bigr)(x_2^-)
	\end{equation*}
	where by $\bigl(D \breve{u}\bigr)(x_2^-)$ we understand the left-sided derivative of $\breve{u}$ at $x_2$, which exists since $u \in W^{2,1}\bigl( (\am,x_2) \bigr)$. We choose the parameters $q_i, F_j, m$ identically as in Case (a). 
	
	We have finally proved that a solution of the elasticity problem \eqref{eq:relaxed_dirichlet_var} exists and now we shall give its construction again via solving the dual problem \eqref{eq:dual_var_beam}. Since the primal problem has a solution, the infimum in \eqref{eq:dual_var_beam} is finite and hence there is a solution $\check{M}$. In particular $\check{M} \in L^1(I)$ satisfies the equilibrium equation $D^2 \check{M} + f = 0$ and furnishes finite complementary energy $\Jc{I}(\check{M}) <\infty$ and we may show that there can only be one such function $\check{M}$: the two dimensional kernel of $D^2$ is precisely the space of affine functions $M_0$ on $I$ and each such non-zero function will give $\Jc{I}(M_0) = \infty$ due to presence of two critical points $x_2,\ap \in \cIcr{0,2}(w)$. The dual problem is thus in a way trivial and the unique piece-wise parabolic bending moment function $\check{M}$ is easy to find. We set a function $\check{v}_2 := -\check{M}/w$ and now our objective is to construct $\check{u}\in \U^{2,2}_\muw$ that yields $\tgradnuk{2}\check{u} = \check{v}_2$, while optimality of $\check{M}$ will render such $\check{u}$ a solution of the primal problem \eqref{eq:relaxed_dirichlet_var}. Since $\abs{\check{M}}^2/w = w \bigl(\check{M}/w \bigr)^2$ there holds $\check{M}/w \in L^2_\muw$. Further we again define  $\mu_{w,1}:= \muw \mres I_1$ with $I_1= (\am,x_2)$ and symmetrically $\mu_{w,2}:= \muw \mres I_2$ with $I_2= (x_2,\ap)$. Since $\am \notin \cIcrr{0,2}(w)$ and $x_2 \notin \cIcrl{0,2}(w)$, we have the embedding $L^2_\muw \hookrightarrow L^1\bigl( I_1 \bigr)$ and, on the other hand, $x_2 \in \cIcrr{0,2}(w)$, $ \ap \in \cIcrl{0,2}(w)$ and only the local embedding $L^2_\muw \hookrightarrow \Lloc{I_2}$ is available. We shall construct the function $\check{u}$ separately on $I_1$ and $I_2$. Firstly, for any $x \in I_1$ we may define $\check{u}_1(x) = \int_{\am}^{x} \check{v}_2(y)\, (x-y) \, dy$ that gives a function $\check{u}_1 \in W^{2,1}(I_1)$ satisfying $\Tr{\,}{\check{u}_1} = \Tr{1\,}{\check{u}_1} =0$. The second interval must be handled with more care and the definition must be local, for instance $\check{u}_2(x):=\int_{x_3}^x \check{v}_2(y)\, (x-y) \, dy$ for each $x \in I_2$ (any internal point of $I_2$ could have been chosen instead of $x_3$): we arrive at a function $\check{u}_2 \in W^{2,1}_\mathrm{loc}(I_2)$. Now we will show that $\check{u}_1 \in H^{2,2}_{\mu_{w,1}}$ and $\check{u}_2 \in H^{2,2}_{\mu_{w,2}}$. First we observe that for any $\eps>0$, thanks to approximation of $L^2_{\muw}$ functions by continuous functions and then the latter uniformly by smooth functions via mollification, we may choose $\check{v}_2^\eps \in \D(\R)$ such that $\norm{\check{v}_2-\check{v}_2^\eps}_{L^2_\muw} <\eps$. Then, starting from $I_2$, the smooth function $\check{u}_2^\eps(x):=\int_{x_3}^x \check{v}_2^\eps(y)\, (x-y) \, dy$ approximates $\check{u}_2$ in $H^{2,2}_{\mu_{w,2}}$, to show this we make use of monotonicity of $w$ on $(x_2,x_3)$ and $(x_3,\ap)$ (although stability of $w$ would have sufficed). Hence we deduce that indeed $\check{u}_2 \in H^{2,2}_{\mu_{w,2}}$ and the proof for $\check{u}_1 \in H^{2,2}_{\mu_{w,1}}$ is identical. In order to arrive at the target function $\check{u}$ we have to glue the two functions $\check{u}_1, \check{u}_2$ and take care of the boundary conditions at the right end $\ap$. Since the points $x_2,\ap$ are not in the set $\cIcr{1,2}(w)$ Proposition \ref{prop:trace_extension} guarantees that $\check{u}_1(x_2)$, $\check{u}_2(x_2)$ and $\check{u}_2(\ap)$ are meaningful -- we may thus modify the function $\check{u}_2$ by an affine function (function from $H^{2,2}_{\mu_{w,2}}$ with zero second tangential derivative) so that, without relabelling, $\check{u}_2(x_2) = \check{u}_1(x_2)$ and $\check{u}_2(\ap)=0$. After this alteration we may ultimately define our function as $\check{u}(x) :=  \check{u}_1(x)$ for $x \in I_1$ and $\check{u}(x) :=  \check{u}_2(x)$ for $x \in I_2$. To be successful it suffices to prove that indeed such function $\check{u}$ is in $\Hnuktwo$: if so then necessarily $\tgradnuk{2}\check{u} = \check{v}_2 = -\check{M}/w$ and the boundary conditions in \eqref{eq:boundary_conditions} are already guaranteed to hold. For arbitrary $\eps>0$ we will point to a function $\check{u}^\eps\in \D(\R)$ with $\norm{\check{u}^\eps-\check{u}_1^\eps}_{H^{2,2}_{\mu_{w,1}}}+\norm{\check{u}^\eps-\check{u}_2^\eps}_{H^{2,2}_{\mu_{w,2}}} <\eps$ which approves of the thesis. Since $\check{u}_1 \in H^{2,2}_{\mu_{w,1}}$ and $x_2 \notin \cIcrl{0,2}(w)$, Theorem \ref{thm:trace_approximation} furnishes function $\check{u}^\eps_1 \in \D(\R)$ with $\check{u}^\eps_1(x_2)= \check{u}_1(x_2)$, $D \check{u}^\eps_1(x_2)= D\check{u}_1(x_2)$, $D^k \check{u}^\eps_1(x_2) = 0$ for $k\geq 2$ and yielding $\norm{\check{u}^\eps-\check{u}_1^\eps}_{H^{2,2}_{\mu_{w,1}}} \leq \eps/2$. Acknowledging $x_2 \notin \cIcrr{1,2}(w)$ and $x_2 \in \cIcrr{0,2}(w)$ the same theorem gives for the function $\check{u}_2 \in H^{2,2}_{\mu_{w,2}}$ an approximation $\check{u}^\eps_2 \in \D(\R)$ with $\check{u}^\eps_2(x_2)= \check{u}_2(x_2)$, $D \check{u}^\eps_2(x_2)= D \check{u}_1(x_2)$, $D^k \check{u}^\eps_2(x_2) = 0$ for $k\geq 2$ and $\norm{\check{u}^\eps-\check{u}_2^\eps}_{H^{2,2}_{\mu_{w,2}}} \leq \eps/2$. Since $\check{u}_2(x_2)$ was already ensured to equal $\check{u}_1(x_2)$, the two functions $\check{u}^\eps_1$ and $\check{u}^\eps_2$ glue smoothly at $x_2$ and hence the function $\check{u}_\eps \in \D(\R)$ is found.
	
	The unique solution $\check{u} \in \U^{2,2}_\muw$ is illustrated at the bottom of Fig. \ref{fig:solution_for_beams}(b). It is visible that the first derivative $\tgradnuk{}\check{u}$ blows up to infinity in vicinity of the right end $\ap$. This stems from the fact that $\tgradnuk{2}\check{u} \notin L^1\bigl(\Bl(\ap,\eps)\bigr)$ for any $\eps>0$, which was possible due to $\ap \in \cIcrl{0,2}(w)$.
	
	\vspace{3mm}
	
	\noindent \underline{Case (c):} The beam elasticity problem for the Case (b) was examined in a fair amount of detail. In this and the next case we shall omit or shorten the arguments that should run analogically to those already made beforehand. In particular the proof of Poincar\'{e} inequality \eqref{eq:general_Poincare} stays unaltered and thus existence of a solution $\check{u}$ is assured provided that the extension $\bar{f}$ exists and is orthogonal to $\U^{2,2}_{\muw,0}=\ker \tgradnuk{2} \cap \U^{2,2}_\muw$. 
	
	For the weight/width $w$ displayed in Fig. \ref{fig:solution_for_beams}(c) standard computations furnish that: $\cIcr{0,2}(w) = \{x_2\}$ and $\cIcr{1,2}(w) = \{x_2\}$, while $x_2$ is double-sided zero-order critical point, i.e. $x_2 \in \cIcrl{0,2}(w) \cap \cIcrr{0,2}(w)$; at the same time $x_2$ is right-sided, but not left sided first-order critical point, namely $x_2 \notin \cIcrl{1,2}(w)$. The embeddings at our disposal are as follows: $\Hnuktwo \hookrightarrow W^{2,1}\bigl( (\am, x_2 - \delta) \bigr),\ W^{2,1}\bigl( (x_2 + \delta,\ap) \bigr)$ for every $\delta \in (0,2)$. Moreover we have $\Hnuktwo \hookrightarrow W^{2,1}\bigl( (\am,x_2) \bigr)$ and $\Hnuktwo \hookrightarrow L^1(I)$, whereas the latter is due to $\cIcr{2,2}(w) = \varnothing$. Both the end-points $\am,\ap$ are not critical points of any order and characterization of the space of admissible displacements follows:
	\begin{equation}
	\label{eq:boundary_conditions_c}
	\U^{2,2}_\muw = \left\{ u \in \Hnuktwo \ : \Tr{}{\,u}(\am) =0, \ \Tr{1}{u}(\am) =0,\ \Tr{}{\,u}(\ap) =0, \ \Tr{1}{u}(\ap) =0 \right\},
	\end{equation}
	namely all the possible Dirichlet boundary conditions are maintained. Criticality $x_2 \in \cIcr{1,2}(w)$ decides through Corollary \ref{cor:step_function} that functions $\ustep_0 := \mathbbm{1}_{(x_2,\ap)}$ and $\ustep_1 = \ustep_1(x) := \int_{\am}^{x} \mathbbm{1}_{(x_2,\ap)} (y)\, dy$ belong to the space $\Hnuktwo$. In terms of structural mechanics the two halves of the beam are thus entirely disconnected and work independently.  We can write down the kernel of $\tgradnuk{2}$ as the four-dimensional space:
	\begin{equation}
	\label{eq:ker_Dmuw_c}
	\ker \tgradnuk{2} = \left\{ u_{P,1}\cdot \mathbbm{1}_{(\am,x_2)} + u_{P,2} \cdot \mathbbm{1}_{(x_2,\ap)} : u_{P,1}, \, u_{P,2} \in \mathrm{P}^1\right\}.
	\end{equation}
	By comparing \eqref{eq:boundary_conditions_c} and \eqref{eq:ker_Dmuw_c} once more we easily check that $\U^{2,2}_{\muw,0} = \{0\}$, which means that the solution of the elasticity problem \eqref{eq:relaxed_dirichlet_var} exists and is unique if and only if $f \in \D'(I)$ receives its continuous extension $\bar{f} \in \bigl( \Hnuktwo \bigr)^*$. This time around we find that the part $f_m = - m D(\delta_{x_2}) $ representing the point-moment is unbounded in $\Hnuktwo$ for non-zero $m\in\R$. We thus choose $m=0$ which finds its reflection in lack of the moment in Fig. \ref{fig:solution_for_beams}(c). This is a consequence of $x_2$ being a double-sided critical point: through Theorem \ref{thm:dirac_delta_approx_crit_point} we may find two sequences $\varphi^-_h \in \D\bigl( \Bl(x_2,1/h) \bigr)$ and $\varphi^+_h \in \D\bigl( \Br(x_2,1/h) \bigr)$ that can be twice integrated to $u^-_h$ and $u^+_h$, in a fashion from the proof of Corollary \ref{cor:step_function}, so that they converge in $\Hnuktwo$ to, respectively, $(\,\cdot\,-x_2) \, \mathbbm{1}_{(x_2,\ap)}$ and $ (x_2-\,\cdot\,) \, \mathbbm{1}_{(x_2,\ap)}$, whilst $D u^-_h(x_2) = 1$ and $D u^+_h(x_2) = 0$ for each $h$. As a result the sequence $u_h:=u^-_h+u^+_h \in \D(\R)$ converges to zero in $\Hnuktwo$ while $f_m(u_h) = -m D u_h (x_2) = -m$. Despite the fact that $x_2$ is also a first-order critical point the situation is different with the distribution $f_{F_2} = F_2 \,\delta_{x_2}$, since $x_2 \notin \cIcrl{1,2}(w)$ and, upon defining $v^*_2(x) = - F_2\cdot(x-x_2) / w(x)\, \mathbbm{1}_{(\am,x_2)}(x)$, we verify that $v^*_2 \in L^2_\muw$ and that $f_{F_2} = D^2(w\, v^*_2)$. We come to the same conclusion by recalling the embedding $\Hnuktwo \hookrightarrow W^{2,1}\bigl( (\am,x_2) \bigr)$. The rest of the components of $f$ extends continuously as well and the rest of the established embeddings may be utilized to prove it. Eventually, with $\breve{u}$ standing for the precise representative of $u \in \Hnuktwo$, we may characterize the extension of $f$ as
	\begin{equation*}
	\pairing{u,\bar{f}} = \sum_{i=1}^{4} \int_{i-1}^{i} q_i \,u \, dx + F_1 \, \breve{u}(x_1) + F_2 \, \breve{u}(x^-_2) + F_3 \, \breve{u}(x_3)
	\end{equation*}
	where $\breve{u}(x^-_2)$ denotes the left-sided limit of $\breve{u}$ at $x_2$. The values of parameters $q_i, F_j$ stays the same as before.
	
	Identically to Case (b) also here the dual problem \eqref{eq:dual_var_beam} is trivial as there is only one $\check{M} \in L^1(I)$ that satisfies the equilibrium constraint $D^2 \check{M} +f =0$ and produces finite energy $\Jc{I}(\check{M}) <\infty$. Indeed, all non-zero affine functions $M_0$ give $\Jc{I}(M_0) = \infty$ since $x_2 \in \cIcr{1,2}(w)$ and by definition $\int_I \abs{x-x_2}^2/w \, dx = \infty$. We easily find the solution $\check{M}$ and define $\check{v}_2 :=- \check{M}/w \in L^2_\muw$. On the separate intervals we define functions $\check{u}_1 = \check{u}_1(x) := \int_{\am}^{x} \check{v}_2(y)\, (x-y)\, dy$ for $x \in I_1$ and $\check{u}_2 = \check{u}_2(x) := \int_{\ap}^{x} \check{v}_2(y)\, (x-y)\, dy$ for $x \in I_2$ (note that in the second integral $x \leq \ap$ and thus we integrate backwards). The boundary conditions in \eqref{eq:boundary_conditions_c} are clearly met, while gluing is not necessary since the step function with the jump at $x_2$ is an element of $\Hnuktwo$. The final solution $\check{u} \in \U^{2,2}_\muw$, visible in Fig. \ref{fig:solution_for_beams}(c), is thus obtained as  $\check{u}(x) :=  \check{u}_1(x)$ for $x \in I_1$ and $\check{u}(x) :=  \check{u}_2(x)$ for $x \in I_2$. The proof that this function indeed lies in the space $\Hnuktwo$ runs analogically to the proof in Case \nolinebreak (b).
	
	\vspace{3mm}
	
	\noindent \underline{Case (d):} We move on to the last case where the width of the beam is the most degenerate: $\cIcr{0,2}(w)= \{ \am, x_2, \ap \}$, $\cIcr{1,2}(w) = \{x_2,\ap \}$, $\cIcr{2,2}(w) = \cIcrl{2,2}(w) = \{ x_2 \}$, while $x_2$ is a double-sided first order critical point, i.e. $x_2 \in \cIcrl{1,2}(w) \cap  \cIcrr{1,2}(w)$. We list the embeddings important for further considerations: $\Hnuktwo \hookrightarrow W^{2,1}_\mathrm{loc}(I_1), \  W^{2,1}_\mathrm{loc}(I_2), \ L^1\bigl( (\am,x_2- \nolinebreak \delta) \bigr), \ L^1\bigl(I_2\bigr)$, where $\delta$ is any number from $(0,2)$. The space $\Hnuktwo$ contains the same functions singular at $x_2$ as in Case (c): $\ustep_0 := \mathbbm{1}_{(x_2,\ap)}$ and $\ustep_1 = \ustep_1(x) := \int_{\am}^{x} \mathbbm{1}_{(x_2,\ap)}(y) \, dy$; in particular the kernel of $\tgradnuk{2}$ is identical to \eqref{eq:ker_Dmuw_c}. According to Proposition \ref{prop:trace_extension} the degeneracies at the end points $\am, \ap$ furnish the space of admissible displacements:
	\begin{equation}
	\label{eq:boundary_conditions_d}
	\U^{2,2}_\muw = \left\{ u \in \Hnuktwo \ : \Tr{}{\,u}(\am) =0 \right\}.
	\end{equation}
	The loss of almost all Dirichlet boundary conditions combined with the four dimensional kernel of $\tgradnuk{2}$ results in a non-trivial, three dimensional space of the so-called zero-energetic displacements:
	\begin{equation*}
	\U^{2,2}_{\muw,0} =  \left\{ u_{P,1}\cdot \mathbbm{1}_{(\am,x_2)} + u_{P,2} \cdot \mathbbm{1}_{(x_2,\ap)} : u_{P,1}, \, u_{P,2} \in \mathrm{P}^1, \ u_{P,1}(\am)=0 \right\}.
	\end{equation*} 
	The Poincar\'{e} inequality is still valid and the elasticity problem \eqref{eq:relaxed_dirichlet_var} will have a solution provided $f$ has an extension $\bar{f}$ and, moreover, $\pairing{u_0,\bar{f}} = 0$ for any $u_0 \in \U^{2,2}_{\muw,0}$. Once more the distribution $f_m$ cannot be extended to $\bigl(\Hnuktwo \bigr)^*$ and precisely for the same reason as in Case (b). Here a similar argument allows us to conclude that also $f_{F_2} = F_2 \, \delta_{x_2}$ is unbounded with respect to $\Hnuktwo$-norm: since $x_2$ is a double sided first-order critical point, Theorem \ref{thm:dirac_delta_approx_crit_point} guarantees a sequence $u_h \in \D(\R)$ converging to zero in $\Hnuktwo$ and attaining $u_h(x_2) = 1$ for each $h$. Further, the fact that $x_2 \in \cIcrl{2,2}(x_2)$ allows us to produce another sequence $u_h \rightarrow 0$ in $\Hnuktwo$ with $\int_{\Bl(x_2,1/h)} u_h \,dx = 1$: this way we must also eliminate the component $f_{q_2}$. The remaining part of the distribution $f$ extends continuously for any $q_1,q_3,q_4 \in \R$ and $F_1,F_3 \in \R$ to
	\begin{equation*}
	\pairing{u,\bar{f}} = \int_{\am}^{x_1} q_1 \,u \, dx+\int_{x_2}^{x_3} q_3 \,u \, dx+\int_{x_3}^{\ap} q_4 \,u \, dx + F_1 \, \breve{u}(x_1) + F_3 \, \breve{u}(x_3),
	\end{equation*}
	where $\breve{u}$ is the precise representative of $u \in \Hnuktwo$. For a solution $\check{u}$ to exist we must now choose the load parameters so that $\bar{f}$ is orthogonal to $\U^{2,2}_{\muw,0}$. Easy computation implies that this is true if and only if the following relations hold: $F_1 = - q_1 \cdot \abs{\am-x_1}/2 = -q_1/2$, $q_3 = q_4$ and $F_3 = - q_3\cdot \nolinebreak \abs{x_2-x_3} - \nolinebreak q_4 \abs{x_3-\ap} = - 2 q_3$. Those relations are in fact equilibrium equations written down for the two independent beams occupying intervals $I_1$ and $I_2$. Eventually we pick $q_1 = q_3 = q_4 =1$ and $F_1 = -1/2$, $F_3 = -2$; we observe that the point forces $F_1$ and $F_3$ are therefore pointed upwards which, together with the absence of the unbounded components of $f$, is noted in Fig. \ref{fig:solution_for_beams}(d).
	
	As in the previous two cases the dual problem \eqref{eq:dual_var_beam} is trivial and there is only one candidate $\check{M} \in L^1(I)$ for the solution. Upon setting $\check{v}_2 := -\check{M}/w$ we may define locally in $I_1$ and $I_2$ two functions $\check{u}_1 \in W^{2,1}_\mathrm{loc}(I_1)$ and $\check{u}_2 \in W^{2,1}_\mathrm{loc}(I_2)$ by: $\check{u}_1(x) := \int_{x_1}^x \check{v}_2(y) \, (x-y) \, dy$ for $x \in I_1$ and $\check{u}_2(x) := \int_{x_3}^x \check{v}_2(y) \, (x-y) \, dy$ for $x \in I_2$. As before we may prove that those functions are elements of $H^{2,2}_{\mu_{w,1}}$ and $H^{2,2}_{\mu_{w,2}}$ respectively. Since $\am \notin \cIcr{1,2}(w)$ Proposition \ref{prop:trace_extension} makes the boundary value $\check{u}_1(\am) \in \R$ meaningful and thus we may shift the function $\check{u}_1$ so that  $\check{u}_1(\am) = 0$. Then the same arguments as before prove that the function $\check{u}$, defined separately by $\check{u}_1$ and $\check{u}_2$ on the two intervals, is an element of $\U^{2,2}_\muw$ and constitutes a solution of the elasticity problem \eqref{eq:relaxed_dirichlet_var}. We stress that this solution is determined up to a zero-energetic displacement function $u_0$ being any element from the three-dimensional space $\U^{2,2}_{\muw,0}$, we capture this in Fig. \ref{fig:solution_for_beams}(d) where the right half of the beam seems to "float", while the first  arbitrarily rotates about the point $\am$. It is also worth noting that the displacement function $\check{u}$ blows up in the right-sided neighbourhood of the centre point $x_2$.

	\bigskip
	\footnotesize
	\noindent\textbf{Acknowledgments.}
	The paper was prepared within the Research Grant no 2015/19/N/ST8/00474 financed by the National Science Centre (Poland), entitled: Topology optimization of thin elastic shells - a method synthesizing shape and free material design.
	
	I would like to thank the Faculty of Mathematics, Informatics and Mechanics, University of Warsaw for its kind hospitality in the course of this research. I also wish to express my deepest appreciation for the invaluable guidance of my PhD supervisors Professor Piotr Rybka and Professor Tomasz Lewi\'{n}ski.


\begin{thebibliography}{SK}
		
		
		
		
		
		
		\bibitem[AF]{adams2003}
		Adams R.A., Fournier J.J.: Sobolev spaces. Academic Press (2003)
		
		\bibitem[Bo1]{bolbotowski2020a}
		Bo\l{}botowski K.: Elastic grillages comprising finite
		number of beams with zero torsional resistance. {P}art {I}: Grillage as a plate of elastic properties given by a measure. In preparation
		
		\bibitem[Bo2]{bolbotowski2020b}
		Bo\l{}botowski K.: Elastic grillages comprising finite
		number of beams with zero torsional resistance. {P}art {II}: Optimal bending
		stiffness distribution. In preparation
		
		\bibitem[BB]{bouchitte2001}
		Bouchitte G., Buttazzo G.: Characterization of optimal shapes and masses
		through Monge-Kantorovich equation. J. Eur. Math. Soc. \textbf{3}, 139--168 (2001)
		
		\bibitem[BF1]{bouchitte2002}
		Bouchitt{\'e} G., Fragal{\`a} I.: Variational theory of weak geometric
		structures: The measure method and its applications. In: dal Maso G.,
		Tomarelli F. (eds) Variational Methods for Discontinuous Structures,
		Birkh{\"a}user Basel, 19--40 (2002)
		
		\bibitem[BF2]{bouchitte2003}
		Bouchitt{\'e} G., Fragal{\`a} I.: Second-order energies on thin structures:
		variational theory and non-local effects. Journal of Functional Analysis \textbf{204}(1), 228--267 (2003)
		
		\bibitem[BF3]{bouchitte2007}
		Bouchitt{\'e} G., Fragal{\`a} I.: Optimality conditions for mass design
		problems and applications to thin plates. Archive for Rational Mechanics and
		Analysis \textbf{184}(2), 257--284 (2007)
		
		\bibitem[BBS]{bouchitte1997}
		Bouchitt\'{e} G., Buttazzo G., Seppecher P.: Energies with respect to a
		measure and applications to low dimensional structures. Calculus of
		Variations and Partial Differential Equations \textbf{5}(1), 37--54 (1997)
		
		\bibitem[BGS]{bouchitte2008}
		Bouchitt{\'e} G., Gangbo W., Seppecher P.: Michell trusses and lines of
		principal action. Mathematical Models and Methods in Applied Sciences
		\textbf{18}(9), 1571--1603 (2008)
		
		\bibitem[Ca]{cavalheiro2008weighted}
		Cavalheiro A.C.: Weighted Sobolev spaces and degenerate elliptic equations.
		Boletim da Sociedade Paranaense de Matem{\'a}tica \textbf{26}(1-2), 117--132 (2008)
		
		\bibitem[ET]{ekeland1999}
		Ekeland I., Temam R.: Convex analysis and variational problems. SIAM (1999)
		
		\bibitem[EG]{evans1992}
		Evans L.C., Gariepy R.F.: Measure theory and fine properties of functions. CRC Press (1992)
		
		\bibitem[FM]{fragala1999}
		Fragal\`{a} I., Mantegazza C.: On some notions of tangent space to a measure.
		Proceedings of the Royal Society of Edinburgh Section A: Mathematics
		\textbf{129}(2), 331--342 (1999)
		
		\bibitem[GU]{gol2009}
		Gol{’}dshtein V., Ukhlov A.: Weighted Sobolev spaces and embedding
		theorems. Transactions of the American Mathematical Society \textbf{361}(7), 3829--3850 (2009)
		
		\bibitem[HK]{hajlasz2000}
		Haj{\l}asz P., Koskela P.: Sobolev met poincar{\'e}. Mem. Amer. Math. Soc. \textbf{688}, 1–-101 (2000)
		
		\bibitem[He]{heyman1959}
		Heyman J.: On the absolute minimum weight design of framed structures.
		Quart. J. Mech. Appl. Math. \textbf{12}(3), 314--324 (1959)
		
		\bibitem[Ke]{kelley1955}
		Kelley J.L.: General topology. Graduate Texts in Mathematics, Springer (1955)
		
		\bibitem[Ki]{kilpelainen1997}
		Kilpel{\"a}inen T.: Smooth approximation in weighted Sobolev spaces.
		Commentationes Mathematicae Universitatis Carolinae \textbf{38}(1), 29--36 (1997)
		
		\bibitem[KO]{kufner1984}
		Kufner A., Opic B.: How to define reasonably weighted Sobolev spaces.
		Commentationes Mathematicae Universitatis Carolinae \textbf{25}(3), 537--554 (1984)
		
		\bibitem[LSG]{lewinski2019}
		Lewi{\'{n}}ski T., Sok{\'o}{\l} T., Graczykowski C.: Michell Structures, Springer International Publishing, Cham (2019)
		
		\bibitem[Lo]{louet2014}
		Louet J.: Some results on Sobolev spaces with respect to a measure and
		applications to a new transport problem. Journal of Mathematical Sciences
		\textbf{196}(2), 152--164 (2014)
		
		\bibitem[MS]{meyers1964}
		Meyers N., Serrin J.: H = W. Nat. Acad. Sci. \textbf{51}, 1055--1056 (1964)
		
		\bibitem[Op]{opic1989}
		Opic B.: Necessary and sufficient conditions for imbeddings in weighted
		Sobolev spaces. {\v{C}}asopis pro p{\v{e}}stov{\'a}n{\'\i} matematiky
		\textbf{114}(4), 343--355 (1989)
		
		\bibitem[PR]{prager1977}
		Prager W., Rozvany G.I.N.: Optimal layout of grillages. J. Struct. Mech.
		\textbf{5}, 1--18 (1977)
		
		\bibitem[Ro]{rockafellar1968}
		Rockafellar R.: Integrals which are convex functionals. Pacific Journal of
		Mathematics \textbf{24}(3), 525--539 (1968)
		
		\bibitem[Roz]{rozvany1976}
		Rozvany G.I.N.: Optimal design of flexural systems: beams, grillages, slabs,
		plates and shells. Elsevier (1976)
		
		\bibitem[Ru]{rudin1991}
		Rudin W.: Functional analysis. McGraw-Hill (1991)
		
		\bibitem[RZG]{rybka2018}
		Rybka P., Zatorska-Goldstein A.: A stationary heat conduction problem. arXiv:1810.04976 (2018)
		
		
	\end{thebibliography}
\end{document}